\documentclass{article}
\usepackage{mathtools}
\usepackage{amssymb, amsmath, amsthm, amscd}
\usepackage[dvips]{graphics}
\usepackage[utf8]{inputenc}
\usepackage[all,cmtip]{xy}
\usepackage{dsfont}
\usepackage{dsfont}
\usepackage{mathrsfs}  
\usepackage{bbm}
\usepackage{textcomp}
\usepackage{eurosym}
\usepackage{enumitem}
\usepackage{setspace}
\usepackage{color}
 \usepackage{setspace}
\usepackage{color}
\usepackage[usenames,dvipsnames,svgnames,table]{xcolor}
\usepackage[colorlinks=true,
            linkcolor=blue,
            urlcolor=blue,
            citecolor=blue]{hyperref}
\usepackage{filecontents}
\newtheorem{lem}{Lemma}[section]
\newtheorem{pro}[lem]{Proposition}
\newtheorem{defi}[lem]{Definition}
\newtheorem{def/not}[lem]{Definition/Notations}
\newtheorem{thm}[lem]{Theorem}
\newtheorem{mthm}[lem]{Main Theorem}

\newtheorem{cor}[lem]{Corollary}
\newtheorem{rem}[lem]{Remark}

\newtheorem{exa}[lem]{Example}

\newcommand{\C}{\mathbb{C}}
\newcommand{\R}{\mathbb{R}}
\newcommand{\N}{\mathbb{N}}
\newcommand{\Z}{\mathbb{Z}}
\newcommand{\SH}{\mathcal{SH}}
\newcommand{\Q}{\mathbb{Q}}
\newcommand{\Pj}{\mathbb{P}}
\newcommand{\Rc}{\mathcal{R}}
\newcommand{\Oc}{\mathcal{O}}
\newcommand{\diff}{\textit{diff}}
\newcommand{\norm}[1]{\left\|#1\right\|}
\newcommand{\convdownarrow}{\mathrel{\ensuremath{\searrow}}}
\newcommand{\dconv}[2]{{#1}\convdownarrow_{\!#2\to\infty}}
\title{Maximal subextension of $m$-subharmonic functions}
\author{Hichame Amal\footnote{Department of Mathematics, Laboratory LaREAMI, Regional Center of Trades of Education and Training, Kenitra, Morocco hichameamal@hotmail.com}\; and Ayoub El-Gasmi\footnote{Ibn tofail university, faculty of sciences, department of mathematics, PO 242 Kenitra Morroco, ayoub.el-gasmi@uit.ac.ma}}
\date{\it{Dedicated to Professor Omar Alehyane on the occasion of his retirement}}
\begin{document}

\maketitle
\begin{abstract}
  In this paper, we prove that given a quasi-$m$-hyperconvex domain $\Omega \subset X$ in a compact Kähler manifold $(X, \omega)$, and a function $\varphi $ in the weighted energy class $\mathcal{E}_\chi^m(\Omega, \omega)$ with respect to a convex weight function $\chi : \mathbb{R} \to \mathbb{R}$, then there exists a maximal $\omega$-$m$-subharmonic subextension $\tilde{\varphi}$ to $X$ that preserves the weighted energy and satisfies a good control properties for its Hessian measure $
\mathbf{1}_\Omega H_m(\tilde{\varphi}) \leq \mathbf{1}_\Omega H_m(\varphi)
$. In the last part, we study the particular case where $(X,\omega)=(\mathbb{P}^n,\omega_{FS}).$\\

\noindent{\bf AMS Classification:} 32U15, 32Q15, 32W20.\\
{\bf Keywords:} maximal subextension of $\omega$-$m$-subharmonic functions, weighted energy class, quasi-$m$-hyperconvex domain, compact K\"ahler manifold.
\end{abstract}
\section{Introduction}
The subextension problem for plurisubharmonic functions - extending a given function to a larger domain while controlling its singularities and Monge-Ampère mass - has been extensively studied since the seminal work of El Mir \cite{E} in 1980. Fundamental contributions by Cegrell and Zeriahi \cite{CZ} showed that functions in the class $\mathcal{F}(\Omega)$ admit subextensions to larger hyperconvex domains with controlled Monge–Ampère mass. These results were later extended to weighted energy classes $\mathcal{E}_\chi(\Omega)$ by \cite{Hi,B,HL}, and to boundary value problems in \cite{CH,H,AC}.

Regarding maximal subextensions of plurisubharmonic functions, Cegrell-Kołodziej-Zeriahi established in \cite{CKZ1} that any plurisubharmonic function $\varphi$ belonging to a suitable Cegrell class in a hyperconvex domain $\Omega \Subset \mathbb{C}^n$ admits a maximal subextension to $\mathbb{C}^n$ whose Monge–Ampère measure is well-defined and supported on the contact set $\{ \tilde{\varphi} = \varphi \} \cup \partial\Omega$. This result was extended to compact Kähler manifolds in \cite{CKZ2}, using the framework of $\omega$-plurisubharmonic functions developed in \cite{GZ}.

In the context of $m$-subharmonic functions, analogous subextension results were obtained by Hung \cite{VH} and Le Mau Hai-Dung \cite{MV}, who showed that functions in the class $\mathcal{F}_m(\Omega)$ admit subextensions preserving the complex Hessian measure. More recently, the theory of subextension and approximatioqn for $m$-subharmonic functions with prescribed boundary values in $m$-hyperconvex domains in $\mathbb{C}^n$ has been developed in \cite{AE,P}.

Building on these foundations, and inspired by the  ideas developed in Cegrell–Kołodziej–Zeriahi \cite{CKZ2}, we investigate the subextension problem for the complex Hessian operator in the setting of weighted energy classes on compact Kähler manifolds. More precisely, given a quasi-$m$-hyperconvex domain $\Omega \subset X$ in a compact Kähler manifold $(X, \omega)$ and a function $\varphi \in \mathcal{E}_\chi^m(\Omega, \omega)$ belonging to a weighted energy class associated with a convex weight function $\chi : \mathbb{R} \to \mathbb{R}$, we study the existence of a maximal $\omega$-$m$-subharmonic subextension $\tilde{\varphi}$ to $X$ that preserves the weighted energy and satisfies sharp control over its complex Hessian measure.

Our main result gives a positive answer under natural volume and energy constraints:

\begin{mthm}
Let $\Omega \subset X$ be a quasi-$m$-hyperconvex domain with $\displaystyle\int_\Omega \omega^n < \int_X \omega^n$, $\chi : \mathbb{R} \to \mathbb{R}$ be a convex weight function and  $\varphi \in \mathcal{E}_\chi^m(\Omega, \omega)$ such that
$\displaystyle
\int_\Omega H_m(\varphi) \leq \int_X \omega^n.
$
Then the maximal $\omega$-$m$-subharmonic subextension $\tilde{\varphi}$ of $\varphi$ to $X$ exists and satisfies:
\begin{enumerate}
\item[(i)] $\tilde{\varphi} \in \mathcal{E}_\chi^m(X, \omega)$, and 
$\displaystyle
\int_X -\chi(\tilde{\varphi})  H_m(\tilde{\varphi}) \leq \int_\Omega -\chi(\varphi)  H_m(\varphi).
$
\item[(ii)] The Hessian measure satisfies 
$
\mathbf{1}_\Omega H_m(\tilde{\varphi}) \leq \mathbf{1}_\Omega H_m(\varphi)
$
in the sens of Borel measures on $X$.
\item[(iii)] The measure $H_m(\tilde{\varphi})$ is supported on the set 
$\{ \tilde{\varphi} = \varphi \} \cup \partial \Omega.
$
\end{enumerate}
\end{mthm}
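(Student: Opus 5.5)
We follow the strategy of Cegrell–Kołodziej–Zeriahi \cite{CKZ2}, adapted to the Hessian setting. Define
\[
\tilde\varphi=\Big(\sup\{u\in SH_m(X,\omega):\ u\le 0 \text{ on } X,\ u\le\varphi \text{ on }\Omega\}\Big)^*.
\]
The first task is to show that this family is nonempty and that the envelope is not identically $-\infty$, i.e.\ that $\tilde\varphi\in SH_m(X,\omega)$ with $\tilde\varphi\le\varphi$ on $\Omega$.

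\textbf{Step 1: approximation by bounded functions.} Approximate $\varphi$ by a decreasing sequence $\varphi_j\in\mathcal{E}^m_0(\Omega,\omega)$, or bounded functions with zero boundary values in the quasi-$m$-hyperconvex sense, using the approximation theorem for $\mathcal{E}^m_\chi(\Omega,\omega)$ stated earlier. The $\varphi_j$ should satisfy $\int_\Omega H_m(\varphi_j)\le\int_\Omega H_m(\varphi)$ and $\int_\Omega-\chi(\varphi_j)H_m(\varphi_j)\le\int_\Omega-\chi(\varphi)H_m(\varphi)$. For bounded $\varphi_j$ the envelope $\tilde\varphi_j$ is bounded below: take a bounded $\omega$-$m$-sh function $\psi$ on $X$ that is very negative on $\Omega$ and is built from an exhaustion of $\Omega$. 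Its existence uses $\int_\Omega\omega^n<\int_X\omega^n$, which leaves room for mass outside $\Omega$.

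\textbf{Step 2: the bounded case via balayage.} For bounded $\varphi_j$, a standard balayage argument gives that $H_m(\tilde\varphi_j)$ vanishes on $\{\tilde\varphi_j<\varphi_j\}\cap\Omega$ and on $X\setminus\overline\Omega$. On small balls where $\tilde\varphi_j<\varphi_j$, respectively where the constraint is inactive, replace $\tilde\varphi_j$ by the solution of the local Dirichlet problem for $H_m$; the maximality of the envelope then forces $H_m(\tilde\varphi_j)=0$ there. This proves (iii) for $\tilde\varphi_j$, with the quasi-continuity and capacity arguments for $\omega$-$m$-sh functions handling the fact that the contact set is only quasi-closed. On the contact set $\{\tilde\varphi_j=\varphi_j\}\cap\Omega$ we have $\tilde\varphi_j\le\varphi_j$ with equality. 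The maximum principle for the Hessian measure on the set where one function touches another from below (the $m$-analogue of Demailly's inequality $\mathbf 1_{\{u=v\}}H_m(\max(u,v))\ge\dots$, applied via $\max(\tilde\varphi_j,\varphi_j-\epsilon)$ type comparisons) gives $\mathbf 1_{\{\tilde\varphi_j=\varphi_j\}}H_m(\tilde\varphi_j)\le\mathbf 1_{\{\tilde\varphi_j=\varphi_j\}}H_m(\varphi_j)$. This yields (ii) for $\varphi_j$. For (i) in the bounded case, integrate $-\chi(\tilde\varphi_j)$ against $H_m(\tilde\varphi_j)$. The mass on $\Omega$ sits on the contact set, where $\chi(\tilde\varphi_j)=\chi(\varphi_j)$, so the estimate is immediate there. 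The mass on $\partial\Omega$ must be shown harmless. We expect it vanishes because $\varphi_j$ has zero boundary values, so $\tilde\varphi_j\ge$ const near $\partial\Omega$ and $\chi$ is controlled. A total mass comparison is needed here, which is where $\int_\Omega H_m(\varphi)\le\int_X\omega^n$ enters: the Hessian measure of $\tilde\varphi_j$ has total mass $\int_X\omega^n$, and the hypothesis guarantees the envelope does not need to ``absorb'' extra mass inside $\Omega$.

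\textbf{Step 3: passage to the limit.} The $\tilde\varphi_j$ decrease to a function $\ge\tilde\varphi$. Since it is a competitor in the envelope, the limit equals $\tilde\varphi$. The uniform energy bound from Step 2, combined with the fact that bounded energy in $\mathcal{E}^m_\chi(X,\omega)$ is preserved under decreasing limits (Guedj–Zeriahi type stability), gives $\tilde\varphi\not\equiv-\infty$, $\tilde\varphi\in\mathcal{E}^m_\chi(X,\omega)$, and the energy inequality (i) by lower semicontinuity. The continuity of $H_m$ along decreasing sequences in $\mathcal{E}^m_\chi$, together with weak convergence of $\mathbf 1_\Omega H_m(\varphi_j)$ on the open set $\Omega$, transfers (ii). Statement (iii) follows by testing on open sets $U\Subset\{\tilde\varphi<\varphi-\delta\}$ after quasi-continuity reductions.

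\textbf{Main obstacle.} The delicate point is the boundary behaviour in Step 2: showing that the mass of $H_m(\tilde\varphi_j)$ escaping to $\partial\Omega$ does not spoil the energy estimate, and that the envelope is finite exactly under the mass condition $\int_\Omega H_m(\varphi)\le\int_X\omega^n$. Our first attempt would construct an explicit global competitor by gluing $\varphi_j$ with a $\omega$-$m$-sh function solving $H_m(u)=\mathbf 1_\Omega H_m(\varphi_j)+c\,\mathbf 1_{X\setminus\Omega}\omega^n$ on $X$, with $c$ fixed by mass balance and $c\ge0$ by the hypothesis. We would solve this equation using the Hessian Calabi–Yau type theorem on $X$, then compare it with $\varphi_j$ on $\Omega$ via the comparison principle.
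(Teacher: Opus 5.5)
Your overall architecture matches the paper's: approximation by $\mathcal{E}^m_0$ functions, balayage plus the $\max\{\tilde\varphi_j,\varphi_j-\varepsilon\}$ argument in the bounded case, a uniform weighted-energy bound, and passage to the decreasing limit. Two points fail as written.

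\textbf{The envelope you define is the wrong one.} The statement concerns $\tilde\varphi=\sup\{\psi\in\mathcal{SH}_m(X,\omega):\ \psi\le\varphi \text{ on } \Omega\}$, with no sign condition on $X\setminus\Omega$. Your extra constraint $u\le 0$ on $X$ adds a second obstacle on $X\setminus\Omega$, and (iii) then becomes false. Take $\varphi\equiv 0$. It lies in $\mathcal{E}^m_\chi(\Omega,\omega)$ and satisfies $\int_\Omega H_m(\varphi)=\int_\Omega\omega^n\le\int_X\omega^n$. Your envelope is $\tilde\varphi\equiv 0$, and $H_m(\tilde\varphi)=\omega^n$ charges $X\setminus\overline{\Omega}$ whenever this open set is nonempty. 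The unconstrained envelope instead vanishes on $\Omega$, has $H_m=0$ off $\overline{\Omega}$, and puts the mass $\int_X\omega^n-\int_\Omega\omega^n$ on $\partial\Omega$. The step that breaks is your claim in Step 2 that balayage kills $H_m(\tilde\varphi_j)$ on $X\setminus\overline{\Omega}$. On a ball $B$ with $\omega=dd^c\rho$, the Dirichlet solution $u$ only satisfies $u+\rho\le\max_{\partial B}(\tilde\varphi_j+\rho)$. So $u$ may become positive, and it is then not an admissible competitor. Drop the constraint.

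\textbf{The boundary term in the energy estimate is not handled.} Your proposed resolution of what you call the main obstacle does not work. The mass of $H_m(\tilde\varphi_j)$ on $\partial\Omega$ does not vanish in general; it is at least $\int_X\omega^n-\int_\Omega H_m(\varphi_j)$. A lower bound for $\tilde\varphi_j$ near $\partial\Omega$ does not help either. The only a priori bound is $-\|\varphi_j\|_\infty$, which is not uniform in $j$, and even a uniform bound would give (i) only up to an additive constant.

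The missing observation, which is what the paper uses, is that this boundary mass sits where $\tilde\varphi_j$ touches the zero boundary values of $\varphi_j$. For $\varphi_j\in\mathcal{E}^m_0(\Omega,\omega)\cap C^0(\overline{\Omega})$ the set $\{\tilde\varphi_j<\varphi_j\}\cap\overline{\Omega}$ is relatively open. Balayage also works on small balls centred at points of $\partial\Omega$ where $\tilde\varphi_j<0$. Hence $H_m(\tilde\varphi_j)$ is carried by $\{x\in\overline{\Omega}:\tilde\varphi_j(x)=\varphi_j(x)\}$, and on its part in $\partial\Omega$ one has $\chi(\tilde\varphi_j)=\chi(0)=0$.

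Combined with your contact-set estimate in $\Omega$, this gives the measure inequality $-\chi(\tilde\varphi_j)H_m(\tilde\varphi_j)\le-\chi(\varphi_j)\mathbf{1}_\Omega H_m(\varphi_j)$ on $X$. Then (i) follows with constant $1$, using lower semicontinuity on the left and $\lim_j\int_\Omega-\chi(\varphi_j)H_m(\varphi_j)=\int_\Omega-\chi(\varphi)H_m(\varphi)$ on the right. You do not need the monotone energy bound you posit in Step 1, which you have not justified.

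The mass hypothesis plays no role in this step. In the paper it enters only in the existence theorem, through exactly the Calabi--Yau gluing you sketch. Once the measure inequality is available, your Step 3 argument is a valid alternative route to $\tilde\varphi\not\equiv-\infty$. Since $H_m(\tilde\varphi_j)$ has mass $\int_X\omega^n$ and $\chi(-\infty)=-\infty$, the uniform energy bound forces $\sup_X\tilde\varphi_j\ge -C$.

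\textbf{Smaller remarks.} The lower bound in Step 1 is trivial: the constant $-\|\varphi_j\|_\infty$ is a competitor, and no volume condition is needed. For (iii), the set $\{\tilde\varphi<\varphi-\delta\}$ is not open. Instead use the inclusion $\{\tilde\varphi_s<\varphi\}\subset\{\tilde\varphi_j<\varphi_j\}$ for $j\ge s$, together with the paper's lemma on $\liminf_j\int_{\{\psi<\varphi\}}d\mu_j$ for sequences uniformly dominated by capacity, and then let $s\to\infty$.
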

In the last part, by analogy with the plurisubharmonic case, we introduce the $m$-Lelong class and study the subextension problem in the particular case where $X = \mathbb{P}^n$.

The structure of the paper is as follows: In Section 2, we recall basic definitions and properties of $\omega$-$m$-subharmonic functions on $(\Omega,\omega)$ and the weighted energy classes $\mathcal{E}_\chi^m(\Omega, \omega)$. In Section 3, we study the maximal subextension of $\omega$-$m$-subharmonic functions from $\Omega$ to $X$. Finally, in Section 4, we focus on the case where $(X,\omega) = (\mathbb{P}^n,\omega_{FS})$.\\

\section{Preliminaries}
\subsection{$\omega$-$m$-subharmonic functions on $\Omega\subset X$}
Let $(X,\omega)$ be a compact Kähler manifold of dimension $n$ and $\Omega\subset X$ be an open subset, and let $D\subset\Omega$ be an open susbset contained in a local chart. Fix an integer $m$ such that $1 \leq m \leq n$.

\begin{defi}
A function $u \in C^2(D,\mathbb{R})$ defined on an open subset $D \subset \mathbb{C}^n$ is called \emph{$m$-subharmonic} ($m$-sh) with respect to a Kähler form $\omega$ if:
$$
(dd^c u)^k \wedge \omega^{n-k} \geq 0, \quad \forall k \in \{1,\dots,m\}.
$$
\end{defi}

\begin{defi}
A function $u \in L^1(D,\mathbb{R})$ is called \emph{$m$-subharmonic} with respect to $\omega$ if:
\begin{enumerate}
    \item $u$ is upper semicontinuous in $D$,
    \item $dd^c u \wedge dd^c u_2 \wedge \cdots \wedge dd^c u_m \wedge \omega^{n-m} \geq 0$ for all $u_2,\dots,u_m \in C^2(D)$ that are $m$-sh with respect to $\omega$,
    \item If $v \in L^1(D)$ satisfies the above conditions and $u = v$ a.e. in $D$, then $u \leq v$.
\end{enumerate}
\end{defi}

\begin{defi}
A function $u \in L^1(\Omega,\omega^n)$ is called \emph{$\omega$-$m$-subharmonic} ($\omega$-$m$-sh) if locally in any coordinate chart $D\subset \Omega$ where $\omega = dd^c\rho$, the function $u + \rho$ is $m$-subharmonic with respect to $\omega$. The set of all such functions is denoted by $\SH_m(\Omega,\omega)$.
\end{defi}

By following Bedford-Taylor’s seminal works, we define the 
Monge-Ampère operator for bounded $\omega$-$m$-sh functions as follows:
\begin{defi}
For bounded $\omega$-$m$-sh functions $u_1, \dots, u_m$, the \emph{complex Hessian operator} is defined recursively as:
$$
H_m(u_1, \dots, u_m) := (\omega + dd^c u_1) \wedge \cdots \wedge (\omega + dd^c u_m) \wedge \omega^{n-m}.
$$
This gives a positive Borel measure. When $u_1 = \cdots = u_m = u$, we write $H_m(u)$.
\end{defi}
\begin{rem}
   Classical results concerning bounded $\omega$-$m$-subharmonic functions $-$ such as the continuity of the Hessian measure with respect to decreasing sequences, the quasi-continuity of $\omega$-$m$-subharmonic functions, and properties of the $m$-capacity $-$ can be extend without difficulty to the setting considered here.

\end{rem}
\begin{defi} Let $\mathcal{D}_m(\Omega, \omega) $ be the set of functions $ \varphi \in \mathcal{SH}_m(\Omega, \omega) $ for which there exists a positive Radon measure $ \mu $ with the following property: if $ \{ \varphi_j \} $ is any sequence of bounded $\omega$-$m$-subharmonic functions decreasing to $ \varphi $, then
$
H_m(\varphi_j) \longrightarrow \mu
$
as $j\to +\infty$ in the weak sense of measures. We set
$
H_m(\varphi) := \mu.
$

\end{defi}

According to this definition, $\mathcal{D}_m(\Omega, \omega) $ is the largest set of $\omega$-$m$-subharmonic functions on which the Hessian operator $ (\omega + dd^c \cdot)^m\wedge\omega^{n-m} $ can be defined so that it is continuous with respect to decreasing sequences of bounded $\omega$-$m$-subharmonic functions.\\
\noindent

\begin{defi}
    We say that a domain $\Omega\subset X$ is quasi-$m$-hyperconvex if there exists a continuous $\omega$-$m$-subharmonic function $\rho : \Omega\to \mathbb{R}^-$ such that 
    $\{\rho<c\}\Subset\Omega$ for all $c<0$.
\end{defi}
\noindent Throughout this paper, we only consider 
quasi-$m$-hyperconvex domains $\Omega$ satisfying $\displaystyle\int_\Omega\omega^n<\int_X\omega^n.$
\noindent The following proposition is a special case of \cite{CN}:
\begin{pro}\label{DP}
Let $(X, \omega)$ be a complex K\"ahler manifold of dimension $n$, and let $B \Subset X$ be a small relatively compact ball in a holomorphic chart. Let $1 \leq m \leq n$, and let $\varphi \in C^0(\partial B)$ be a continuous boundary data. Then there exists a function $u \in \mathcal{SH}_m(B, \omega) \cap C^0(\overline{B})$ such that
$$
(\omega + dd^c u)^m \wedge \omega^{n-m} = 0 \quad \text{in } B, \qquad u|_{\partial B} = \varphi.
$$
\end{pro}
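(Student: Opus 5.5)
We reduce to the flat Dirichlet problem for the complex Hessian equation. The paper notes the statement is a special case of \cite{CN}, so the plan is to check that our situation meets the hypotheses there.

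First, we choose the holomorphic chart $z$ on a neighbourhood $U$ of $\overline{B}$ so that $B$ is a Euclidean ball. Shrinking $U$ if needed, we fix a smooth strictly plurisubharmonic local potential $\rho$ with $\omega = dd^c\rho$ on $U$; this is possible since $\omega$ is Kähler, hence closed. By the definition of $\omega$-$m$-subharmonicity, $u\in\mathcal{SH}_m(B,\omega)$ exactly when $v := u+\rho$ is $m$-subharmonic with respect to $\omega$ in the sense of the earlier definition. Moreover, $(\omega+dd^c u)^m\wedge\omega^{n-m} = (dd^c v)^m\wedge\omega^{n-m}$. The problem therefore becomes: find $v$, $m$-sh with respect to the (non-flat) Kähler form $\omega$, continuous on $\overline{B}$, with
$$(dd^c v)^m\wedge\omega^{n-m}=0 \text{ in } B, \qquad v|_{\partial B}=\varphi+\rho|_{\partial B}\in C^0(\partial B).$$

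Second, we solve this homogeneous Hessian equation by Perron's method. Set
$$v:=\sup\{w : w \text{ is } m\text{-sh w.r.t. }\omega \text{ in } B,\ \limsup_{z\to\zeta} w(z)\le \varphi(\zeta)+\rho(\zeta) \text{ for } \zeta\in\partial B\}.$$
We then establish three facts.

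(a) \emph{Barriers.} The ball is strictly pseudoconvex, so $|z-a|^2-R^2$ is strictly psh, hence $m$-sh with respect to any Kähler form. From it we build continuous sub-barriers at each boundary point, together with a harmonic-type super-barrier. A $\omega$-$m$-sh function is in particular $\omega$-subharmonic, so it is dominated by the solution of the Laplacian $\Delta_\omega$ Dirichlet problem with the same data. Together these give $v^*=v$ on $\partial B$ and continuity of $v$ up to the boundary.

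(b) \emph{Continuity in the interior.} We use the Walsh-type argument: translates in the Euclidean chart are not $\omega$-compatible, so instead we approximate the boundary data uniformly by smooth data. For smooth data on a strictly pseudoconvex (in particular strictly $m$-pseudoconvex) ball, \cite{CN} provides smooth solutions for positive right-hand side $\epsilon\,\omega^n$. The uniform limit as the data converge and $\epsilon\to0$ is continuous; by the comparison principle it coincides with $v$.

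(c) \emph{Maximality.} $(dd^c v)^m\wedge\omega^{n-m}=0$ follows by the standard balayage: on any small ball $B'\Subset B$, replacing $v$ with the solution having data $v|_{\partial B'}$ does not decrease $v$, so $v$ is maximal. Maximal continuous $m$-sh functions have vanishing Hessian measure by the comparison principle.

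Finally, we set $u:=v-\rho$.

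The main obstacle is step (b). The smooth solvability of the Hessian equation with respect to a non-flat Kähler form $\omega$ needs the a priori $C^2$ estimates up to the boundary, and these are exactly the content of \cite{CN}. We take them as given and only perform the approximation and the comparison-principle bookkeeping, which carry over from Bedford–Taylor as indicated in the Remark.
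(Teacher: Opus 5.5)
Your argument is correct in outline, but it takes a different route from the paper. The paper gives no proof of Proposition \ref{DP}. It declares the statement a special case of \cite{CN}, so the \emph{degenerate} Dirichlet problem with continuous data is imported as a black box.

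You import only the \emph{nondegenerate} statement: smooth solutions of $(dd^c v)^m\wedge\omega^{n-m}=\epsilon\,\omega^n$ on the ball with smooth data, where $\tilde f_k+A(|z-a|^2-R^2)$ serves as a subsolution. From it you derive the continuous degenerate case by approximation and comparison. Inside this paper, that nondegenerate input is what is taken from \cite[Lemma 3.16]{GN} in the proof of Theorem \ref{contperron}, while \cite{CN} is used (Theorem 1.3 there) for the global equation on compact $X$. Your black box should therefore be attributed to \cite{GN}. Your scheme is then essentially the same uniform-limit mechanism the paper runs in Theorem \ref{contperron}.

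Two remarks on economy and precision.

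First, step (b) alone already proves the proposition. For $\delta<\epsilon$ you have
$v_{k,\delta}+C\epsilon^{1/m}(|z-a|^2-R^2)\le v_{k,\epsilon}\le v_{k,\delta}$.
This uses $\beta\ge c\,\omega$ on $\overline B$ and Gårding superadditivity of $(dd^c\cdot)^m\wedge\omega^{n-m}$ on the $m$-positive cone. You also have $\|v_{k,\epsilon}-v_{l,\epsilon}\|_\infty\le\|f_k-f_l\|_\infty$. These give a uniform limit $\hat v\in C^0(\overline B)$ with the prescribed boundary values and $(dd^c\hat v)^m\wedge\omega^{n-m}=\lim\epsilon\,\omega^n=0$, by continuity under uniform convergence. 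So the Perron envelope, the barriers in (a) and the balayage in (c) are superfluous. Indeed, (c) relies on solvability on sub-balls, which is exactly what (b) supplies.

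Second, if you keep (a), note that $|z-a|^2-R^2$ vanishes on all of $\partial B$, so it cannot peak at a single boundary point $\zeta$. Use a pluriharmonic peak function such as $\mathrm{Re}\langle z-a,\zeta-a\rangle-R^2$ instead.

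What your route buys is transparency. It isolates the one genuinely hard ingredient, the boundary $C^2$ estimates for the nondegenerate problem with a non-flat Kähler background. It shows that everything else is elementary maximum-principle bookkeeping. The paper's one-line citation is shorter, but it leaves unclear which result of \cite{CN} is meant.
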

\begin{thm}
\label{contperron}
Let $(X,\omega)$ be a compact Kähler manifold, $\Omega \subset X$ an $\omega$-$m$-pseudoconvex domain, and $f \in C^\infty(\overline{\Omega})$. The Perron envelope defined by
\[
P(f)(x) := \sup \{ v(x) \mid v \in \mathcal{SH}_{m}(\Omega,\omega),\ v \leq f \text{ in } \Omega\}
\]
satisfies:
\begin{enumerate}[label=(\roman*)]
    \item $P(f) \in C^0(\Omega)$
    \item $H_m(P(f)) = 0$ in $\{P(f) < f\}$.
\end{enumerate}
\end{thm}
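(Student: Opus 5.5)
The plan is the classical Bedford–Taylor/Walsh argument for Perron envelopes, localized to small coordinate balls and using the Dirichlet problem of Proposition \ref{DP}. Write $u:=P(f)$. The constant $-\|f\|_\infty$ is $\omega$-$m$-sh, since $\omega+dd^c(\mathrm{const})=\omega$ is positive, so $u$ is bounded with $-\|f\|_\infty\le u\le f$. Its upper semicontinuous regularization $u^*$ is again $\omega$-$m$-sh, and $u^*\le f$ because $f$ is continuous. Hence $u^*$ is a competitor, and $u=u^*$ is itself $\omega$-$m$-subharmonic.

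\textbf{Continuity (i).} I would first show that $u$ is lower semicontinuous, via a Walsh-type lemma adapted to $\omega$-$m$-sh functions. Fix a compact $K\Subset\Omega$ and $\varepsilon>0$. Using that $f$ is smooth and that $\Omega$ is $\omega$-$m$-pseudoconvex (so that it carries a continuous exhaustion, from which barriers are built), I will construct, for each small vector $h$ in a chart, a competitor out of translates of $u$: take $\max\bigl(u(z+h)-\varepsilon-C|h|,\, u(z)\bigr)$ on an interior region and glue it to $u$ near the boundary. The twist in $\omega$ under translation is absorbed by the smoothness of $\omega$ and of a local potential $\rho$, at the cost of $C|h|$. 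This yields the modulus estimate $u(z+h)\le u(z)+\varepsilon+C|h|$, and letting $\varepsilon,h\to0$ gives continuity of $u$ in each chart.

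An alternative I would keep in reserve is to approximate $u$ from below by continuous $\omega$-$m$-sh functions, using a Richberg/Plis-type regularization available on Kähler manifolds for $m$-sh functions. One takes the sup of continuous competitors, which is lower semicontinuous, and checks by Choquet's lemma that it equals $u$. I expect this continuity step to be the main obstacle, because translation and gluing do not preserve $\omega$-$m$-subharmonicity exactly on a manifold. I would first try the regularization route and use the Walsh lemma as a fallback.

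\textbf{Maximality (ii).} Since $u$ and $f$ are continuous, $U:=\{u<f\}$ is open. Fix a small ball $B\Subset U$ in a holomorphic chart and take the boundary datum $u|_{\partial B}\in C^0(\partial B)$. Proposition \ref{DP} gives $v\in\mathcal{SH}_m(B,\omega)\cap C^0(\overline B)$ with $H_m(v)=0$ in $B$ and $v=u$ on $\partial B$. By the comparison principle for bounded $\omega$-$m$-sh functions on $B$, which is local and follows as in the $m$-sh case by adding the potential $\rho$, we get $v\ge u$ on $B$.

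Shrinking $B$ if necessary, we may assume $\max_{\overline B}u<\min_{\overline B}f$, since $u<f$ on $\overline B$ and both are continuous on the small compact ball. Then $v\le\max_{\partial B}u<\min_{\overline B}f$, using a maximum principle for $v$ that I get by comparing $v$ with the constant $\max_{\partial B}u$, which is an $\omega$-$m$-sh function.

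Now define $w=v$ on $B$ and $w=u$ off $B$. This $w$ is $\omega$-$m$-sh by the standard gluing lemma, since $v\ge u$ in $B$ and $v=u$ on $\partial B$ with both functions continuous. It also satisfies $w\le f$. Hence $w\le u$, which forces $v=u$ on $B$ and therefore $H_m(u)=0$ in $B$. Covering $U$ by such balls gives $H_m(P(f))=0$ on $\{P(f)<f\}$.
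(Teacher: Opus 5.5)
\textbf{There is a genuine gap, and it is in part (i).} Your (ii) is the classical balayage argument and works once $P(f)$ is known to be continuous. Neither of your routes to continuity goes through as sketched.

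\textbf{The Walsh route.} Translations exist only inside a single chart, and $\Omega$ need not lie in one. So the set on which you form $\max\{u(\cdot+h)-\varepsilon-C|h|,\,u\}$ has an edge inside $\Omega$. Gluing there requires $u(z+h)-\varepsilon-C|h|\le u(z)$ on that edge, which is exactly the equicontinuity you are trying to prove. Barriers could only help along $\partial\Omega$, and a continuous exhaustion does not give a barrier at every boundary point (hyperconvexity is weaker than $B$-regularity); in any case $P(f)$ carries no boundary condition. More seriously, for $m<n$ the twist is not a sup-norm error. The function $(u+\rho)(\cdot+h)$ is $m$-subharmonic for the cone $\Gamma_m(\omega(\cdot+h))$. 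Moving a form $\alpha$ from $\Gamma_m(\omega(z+h))$ into $\Gamma_m(\omega(z))$ costs a correction of size about $|h|\,|\alpha|\,\omega$. Since $\alpha=dd^c(u+\rho)$ has only measure coefficients, no constant $C|h|$ absorbs this. (For $m=n$ the cone is independent of $\omega$ and your remark is correct.)

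\textbf{The regularization route.} Richberg/Pli\'s-type smoothing approximates $u$ from above, and only on a neighbourhood $V$ of a compact set, as in Proposition \ref{approx-local}. To make $\psi_k-\varepsilon$ a competitor on all of $\Omega$ you must glue it to $u$ near $\partial V$, i.e.\ know $\psi_k-\varepsilon\le u$ there. Dini's lemma gives this only if $u$ is already lower semicontinuous. Writing $u$ as a supremum of continuous competitors is equivalent to what you want to prove, and Choquet's lemma only reduces a supremum to a countable one. Moreover, in this paper Proposition \ref{approx-local} for non-continuous $u$ is itself derived from the present theorem, so invoking it would be circular.

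\textbf{What the paper does instead.} It obtains continuity from a PDE on small balls $B$:
\begin{enumerate}
\item take smooth $f_j\le f$ decreasing to $P(f)|_{\partial B}$;
\item solve the penalized problems $H_m(u)=e^{\beta(u-f)}(F_\ell+1/\beta)\,\omega^n$ with $u=f_j$ on $\partial B$;
\item get $u\le f$ by examining a maximum point of $u-f$;
\item obtain uniform $L^\infty$ bounds and uniformly bounded densities;
\item use the estimates it cites from [DK] together with Arzel\`a--Ascoli to reach a continuous solution of the obstacle problem $H_m(u)=\mathbf{1}_{\{u=f\}}H_m(f)^+$;
\item identify this solution with $P(f)$ on $B$ by comparison and gluing.
\end{enumerate}
This yields (i) and (ii) together. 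Be aware that the crux there is equicontinuity uniform in $j$, although the boundary data only decrease to an upper semicontinuous function. That is the very difficulty you flagged, and for $m\ge2$ it does not follow from density bounds alone. Indeed, if $s$ is a bounded discontinuous subharmonic function of one variable and $\omega=dd^c\rho$ locally, then $s(z_1)-\rho$ is $\omega$-$m$-subharmonic with $H_m=0$.

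\textbf{A smaller slip in your (ii).} You cannot compare $v$ with the constant $\max_{\partial B}u$, because $H_m(\text{const})=\omega^n>0=H_m(v)$. In fact $\omega$-$m$-subharmonic functions do not obey the maximum principle: for a potential normalized as $\rho=|z|^2+O(|z|^3)$, the function $-\rho$ has $H_m(-\rho)=0$ and a strict local maximum at $0$. Apply the maximum principle to the $m$-subharmonic function $v+\rho$ instead. This gives $v\le\max_{\partial B}u+\operatorname{osc}_{\overline B}\rho$, which suffices after shrinking $B$.
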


\begin{proof}
Since $P(f)^\star\leq f$, then $P(f)=P(f)^\star$ and hence it is upper semicontinuous. Let $B \Subset \Omega$ be a small ball. By upper semicontinuity of $P(f)$, there exists a sequence $f_j \in C^\infty(\partial B)$ such that:
\[
f_j \searrow P(f)|_{\partial B} \quad \text{and} \quad f_j \leq f|_{\partial B}.
\]
Let $F$ be the smooth function such that $F\omega^n=H_m(f)$ on $\bar{B}$, $F^+=\max\{F,0\}$ and $H_m(f)^+(z):=\max\{(\omega +dd^cf)^m\wedge \omega^{n-m}(z),0\}$ is the non-negative part of $H_m(f)$. We choose a sequence of smoothly non-negative functions $F_\ell$ decreasing uniformly to $F^+$ as $\ell\to+\infty$. 
For $\beta \geq 1$, consider the Dirichlet problem:
\begin{equation}\label{eq:penalized}
\begin{cases}
H_m(u_{\beta,j}) = e^{\beta({u_{\beta,j} - f})} \left( F_\ell + \frac{1}{\beta} \right)\omega^n & \text{in } B \\
u_{\beta,j} = f_j & \text{on } \partial B
\end{cases}
\end{equation}
By \cite[Lemma 3.16]{GN}, the Dirichlet problem \eqref{eq:penalized} admits a unique solution $u_{\beta,j,\ell}\in \mathcal{SH}_m(B,\omega)\cap\mathcal{C}^\infty(\overline{B})$. We claim that $u_{\beta,j} \leq f$. Indeed, suppose by contradiction that $\max_B (u_{\beta,j,\ell} - f) > 0$ and let $x_0\in \bar{B}$ be a point where this maximum is attained. Since $u_{\beta,j,\ell} = f_j \leq f$ on $\partial B$, then $x_0\in B$. Hence
\[
(u_{\beta,j,\ell} - f)(x_0) > 0 \implies H_m(u_{\beta,j,\ell})(x_0) \leq H_m(f)(x_0).
\]
On the other hand,
\[
H_m(u_{\beta,j})(x_0)=  e^{\beta({u_{\beta,j} - f})(x_0)} \left( F_\ell(x_0) + \frac{1}{\beta} \right)\omega^n>  [H_m(f)]^+(x_0)\geq H_m(f)(x_0),
\]
a contradiction. Thus, $u_{\beta,j,\ell} \leq f$ in $B$, so the family is uniformly bounded from above. By the same argument, since $f_{j_1} \geq f_{j_2}$ if $j_1<j_2$, one deduces
\[
u_{\beta,j_1,\ell} \geq u_{\beta,j_2,\ell} \quad \text{in}\;\; B.
\]
Thus $(u_{\beta,j,\ell})_j$ is a decreasing sequence. Let $u_{\beta,\ell}=\lim_ju_{\beta,j,\ell}$. We have $u_{\beta,\ell}=P(f)$ on $\partial B$. 
Let $\rho\in\mathcal{C}^\infty(\overline{B})$ such that $dd^c\rho=\omega$ in $B$ and $\rho=0$ on $\partial B$. For $A$ large enough, we have $H_m(P(f)+A\rho)\geq H_m(u_{\beta,\ell})$. Since $u_{\beta,\ell}=P(f)+A\rho$ on $\partial B$, by the comparison principle (see \cite[Corollary 3.11]{GN} we have $u_{\beta,\ell}\geq P(f)+A\rho$. Then we have a uniform lower bound of $u_{\beta,j,\ell}$ independent of $j$, $\ell$ and $\beta$.
Therefore, the family $\{u_{\beta,j,\ell}\}$ satisfies uniform $L^\infty$ bounds. From \eqref{eq:penalized} we write the right-hand side as
\[
g_{\beta,j,\ell} := e^{\beta(u_{\beta,j,\ell}-f)}\Big(F_\ell+\tfrac{1}{\beta}\Big).
\]
Since $u_{\beta,j,\ell}\le f$, we have $e^{\beta(u_{\beta,j,\ell}-f)}\le 1$, hence
$
0\le g_{\beta,j,\ell}\le M+1,
$
independently of $\beta,j, \ell$. Thus, the measures $H_m(u_{\beta,j,\ell})=g_{\beta,j,\ell}\,\omega^n$ have densities uniformly bounded from above. Hence $g_{\beta,j,\ell}\in L^q(B)$ for all $q$ and $\|g_{\beta,j,\ell}\|_{L^q}$ is bounded uniformly. Then by \cite[Theorem 2.7]{DK}, the family $\{u_{\beta,j,\ell}\}_{j}$ is equicontinuous in each compact $K\Subset B$. From the Arzelà-Ascoli theorem it follows that $u_{\beta,\ell}=\lim_ju_{\beta,j,\ell}$ is continuous on $B$. 
Using the same arguments, we show that the sequence $(u_{\beta,\ell})_\ell$ is increasing in $B$. Applying \cite[Theorem 2.5]{DK} on every ball $B^\prime\Subset B$ we conclude that $u_{\beta,\ell}$converges uniformly on every compact subsets of $B$ to a function $u_\beta$. Then $u_\beta=P(f)$ on $\partial B$, continuous on $B$ and $H_m(u_\beta)=e^{\beta(u_\beta-f)}(H_m(f)^++\frac{\omega}{\beta})$. Also, by the same arguments we show that the sequence $(u_{\beta})_\beta$ is increasing in $B$ and converges uniformly on every compact subset of $B$ to a function $u$. The function $u$ satisfies: $u=P(f)$ in $\partial B$, continuous in $B$ and $H_m(u)={\bf 1}_{\{u=f\}}H_m(f)^+$.
Let $G := \{ z \in B : u(z) < f(z) \}$. Since the set $G\subset B$ is open, then $u = f\geq P(f)$ on $\partial G$ and $(\omega + dd^c u)^m \wedge \omega^{n-m} = 0 \quad \text{in } G.$  It follows from \cite[Corollary 3.11]{GN} that $u\geq P(f)$. Conversely, define:
\[
v = \begin{cases}
u & \text{in } B \\
P(f) & \text{in } \Omega \setminus B
\end{cases}
\]
Then $v \in \mathcal{SH}_m{(\Omega)}$ and $v \leq f$, so $v \leq P(f)$. In particular $u \leq P(f)$ in $B$.
Therefore, $u = P(f)$ in $B$. Since $B$ is arbitrary in $\Omega$, we conclude that $P(f)$ is continuous on $\Omega$ and that $H_m(P(f)) = 0$ on $\{P(f) < f\}$.
\end{proof}
\begin{pro}
\label{approx-local}
Let $u$ be a $\omega$-$m$-subharmonic on $\Omega$. For every compact set $K\subset \Omega$, there exists an open set $V$ with $K\Subset V\Subset \Omega$ and a sequence of  functions $\{\psi_k\}_{k\ge1}\subset\mathcal{SH}_{m}(V,\omega)\cap \mathcal C^\infty(V)$  such that
\[
\psi_k \searrow u \quad\text{pointwise on }K.
\]
\end{pro}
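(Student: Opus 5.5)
The plan is to localize and then regularize in a chart using a local potential for $\omega$. Given the compact set $K\subset\Omega$, cover it by finitely many small coordinate balls $B_1,\dots,B_N\Subset\Omega$. On each ball choose a smooth strictly psh potential $\rho_i$ with $dd^c\rho_i=\omega$ on a neighbourhood of $\overline{B_i}$. On each $B_i$ the function $u+\rho_i$ is $m$-subharmonic with respect to $\omega$ in the sense of the definitions above. The first task is to produce smooth $m$-sh approximants of $u+\rho_i$ on a slightly smaller ball.

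Because $\omega$ has nonconstant coefficients, standard convolution does not directly preserve $m$-subharmonicity with respect to $\omega$. I would therefore avoid convolution and use the Perron envelope of Theorem~\ref{contperron} instead. Since $u$ is upper semicontinuous and bounded above on a neighbourhood $V'$ of $K$, with $K\Subset V'\Subset\Omega$, there is a sequence of smooth functions $f_k$ on $\overline{V'}$ with $f_k\searrow u$. Here $V'$ should be taken $\omega$-$m$-pseudoconvex, for instance a small sublevel set of the exhaustion $\rho$ of $\Omega$, or a union of small balls. Set $v_k:=P(f_k)$, computed on $V'$. Then $v_k$ is $\omega$-$m$-sh, continuous by Theorem~\ref{contperron}(i), and decreasing in $k$. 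Moreover $u\le v_k\le f_k$, since $u$ is a competitor in each envelope. Hence $v_k\searrow u$ on $V'$.

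It remains to pass from continuous to smooth approximants. A continuous $\omega$-$m$-sh function can be approximated uniformly on compact subsets by smooth $\omega$-$m$-sh functions, at the cost of a small loss in $\omega$. Concretely, in each ball I would regularize $v_k+\rho_i$ by convolution, or by a Richberg-type gluing with a regularized maximum, to obtain a smooth function $w_{k,i}$ with $|w_{k,i}-(v_k+\rho_i)|<2^{-k}$. The function $w_{k,i}$ is $m$-sh with respect to $(1+\varepsilon_k)\omega$. The local pieces are then patched by Richberg's regularized maximum with cutoff corrections, and the $\varepsilon_k\omega$ defect is absorbed by replacing $u$ by $(1-\varepsilon_k)$-rescalings. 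This replacement is allowed because $\frac{1}{1+\varepsilon}(\omega+dd^c w)$ is controlled and $\omega$-$m$-sh functions are stable under convex combination with $0$.

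Adding the constants $3\cdot2^{-k}$ and reindexing then makes the sequence $\psi_k$ decreasing. It consists of smooth $\omega$-$m$-sh functions on an open set $V$ with $K\Subset V\Subset V'$, and $\psi_k\searrow u$ pointwise on $K$.

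I expect the main obstacle to be this regularization step, namely showing that smoothing a continuous $\omega$-$m$-sh function keeps it $\omega$-$m$-sh up to an arbitrarily small loss in the constant $\omega$. This is delicate because $m$-positivity is taken with respect to the variable form $\omega$ rather than the flat one. My first approach would be to freeze the coefficients of $\omega$ at a point in very small balls, so that the cone of $m$-positive forms varies continuously, and to use continuity of $v_k$, which gives uniform convergence of the mollifications. An alternative is to solve the smooth Dirichlet problems of \cite[Lemma 3.16]{GN} with smooth boundary data approximating $v_k$ and right-hand side $\frac1k\omega^n$. This produces smooth $\omega$-$m$-sh functions on each ball directly, and the comparison principle \cite[Corollary 3.11]{GN} gives monotonicity in $k$.
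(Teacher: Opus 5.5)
There is a genuine gap, and it is the step you flagged yourself. Your outer architecture is the paper's Step~2: continuous envelopes $P(f_k)\searrow u$ from Theorem~\ref{contperron}, then a small convex combination to gain strictness. The paper uses $(1-\delta_j)P(f_j)+\delta_j\chi$ with $\chi$ smooth and strictly $\omega$-$m$-sh; your $(1-\varepsilon_k)$-rescaling plays the same role. Then come uniform smooth approximation on $K$ and a diagonal choice with added constants to force monotonicity. What is missing is a proof that a continuous (strictly) $\omega$-$m$-sh function can be approximated uniformly by smooth $\omega$-$m$-sh functions. This is the whole content of the paper's Step~1, which rests on Pli\'s's local lemma \cite[Lemma 9]{Plis}: on a \emph{fixed} strictly pseudoconvex $U_j$, for every $\delta>0$, there is a smooth strictly $\omega$-$m$-sh $v$ with $v>u$ on $\overline{U_j'}$ and $v\le u+\delta$ on $\partial U_j$. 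These pieces are then glued by $\frac1t\log\sum_j e^{tv_j}$.

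Neither of your two mechanisms supplies this.

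\emph{Convolution.} $\Gamma_m(\omega(z))$ is a cone that moves with $z$, so transporting $dd^c v(y)\in\Gamma_m(\omega(y))$ into $\Gamma_m(\omega(z))$ costs about $|y-z|\,|dd^c v(y)|\,\omega$. After mollifying with $\theta_\varepsilon$ at scale $\varepsilon$, the natural bound on the defect is $\varepsilon\,(|dd^c v|*\theta_\varepsilon)\approx\varepsilon^{-1}\operatorname{osc}_{B(z,2\varepsilon)}v$. This does not tend to $0$ for a merely continuous (or even Lipschitz) $v$, and freezing coefficients on smaller balls does not change the scaling. So the claimed loss $\varepsilon_k\omega$ with $\varepsilon_k\to0$ is unjustified.

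\emph{Dirichlet problem.} The problem $H_m(w)=\frac1k\omega^n$, $w=g_k\approx v_k$ on $\partial B$, only sees boundary values. Its solution is close to the maximal function with data $v_k|_{\partial B}$, which lies above $v_k$ but can be far above it inside $B$; the error is only bounded by $\operatorname{osc}_B v_k+O(r^2)$. Reaching accuracy $\delta$ therefore forces the radius $r=r(\delta)\to0$. The Richberg cut-off corrections (a drop of size $\gtrsim\delta$ over length $r$) then cost about $\delta/r^2$ of positivity, which no fixed strictness margin can pay unless $\operatorname{osc}_{B_r}v_k=o(r^2)$. Also, decreasing the right-hand side $\frac1k\omega^n$ pushes solutions up, so the comparison principle gives no monotonicity in $k$.

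What you need is an approximation with error $\delta$ on a ball of \emph{fixed} size. You can either cite Pli\'s as the paper does, or use the penalized problem from the proof of Theorem~\ref{contperron} on a fixed ball $B$. Take as boundary data a smooth $f$ with $v_k+\delta/2<f<v_k+\delta$ on $\overline B$. The smooth solutions $w_\beta$ satisfy $w_\beta\le f$ by the maximum-principle argument there. For $\beta$ large, comparison with $v_k+\delta/2+s\rho_B$ (where $dd^c\rho_B=\omega$, $\rho_B|_{\partial B}=0$, $s>0$ small) gives $w_\beta>v_k$ on $B$.
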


\begin{proof} {\bf Step 1.} Suppose first that $u$ is continuous and strictly $\omega$-$m$-subharmonic on $\Omega$. We repeat the same proof given in $\cite{Plis}$.
 Cover $K$ by a finite collection of open sets $(U_j)_{1\le j\le N}$ contained in $\Omega$ such that each $U_j$ is a smooth strictly pseudoconvex domain. Let $U_j'\Subset U_j$ be open sets such that $W=\bigcup_j U_j'\supset K$.
By \cite[Lemma 9]{Plis}, for each $j$ and for $k\geq 1$, there exists a function $v_{j,k}\in\mathcal{C}^\infty(\overline{U_j})$ strictly $\omega$-$m$-subharmonic in $U_j$ such that
\[
v_{j,k} > u \;\text{on }\overline{U_j'},\; and\; v_{j,k} \le u + \delta_{j,k} \;\text{on }\partial U_j,
\]
with $\delta_{j,k}>0$ is chosen arbitrarily small. Set $u_{j,k} := v_{j,k}$ on $U_j$.
For $t>0$, define the smooth function on $W$:
\[
\Psi_{k,t} := \frac{1}{t}\log\left(\sum_{j=1}^N e^{t u_{j,k}}\right).
\]
 For sufficiently large $t$, $\Psi_{k,t}$ is strictly $\omega$-$m$-subharmonic on a neighborhood $V$ of $K$, and we have the estimate
\(\displaystyle
\max_j u_{j,k} \le \Psi_{k,t} \le \max_j u_{j,k} + \frac{\log N}{t}.
\)
By choosing appropriate $\delta_{j,k}$ and $t$, we obtain a smooth function $\psi_k := \Psi_{k,t}$ strictly $\omega$-$m$-subharmonic on $V$ satisfying $
u < \psi_k \le u + \varepsilon_k$ on $K$,

with $\varepsilon_k > 0$ as small as desired.
Let $\varepsilon_1 > 0$ be small. For $k\geq 1$, define recursively
\(\displaystyle
\varepsilon_{k+1} := \frac{1}{2}\min_{K}(\psi_k - u) > 0.
\)
Applying the previous construction with a sufficiently small $\delta_{j,k+1}$, we obtain $\psi_{k+1}$ such that
\(
u \le \psi_{k+1} \le u + \varepsilon_{k+1} \) on \(K\).
By construction, $\varepsilon_{k+1} \leq \dfrac{1}{2}(\psi_k - u)$. Hence,
\(\displaystyle
\psi_{k+1} \le \frac{u + \psi_k}{2} \le \psi_k \) on \(K\).
The sequence $\{\psi_k\}$ is decreasing and converges uniformly to $u$ on $K$ since $\|\psi_k - u\|_{L^\infty(K)} \leq \varepsilon_k \to 0$.\\
{\bf Step 2.} Now take $u\in \SH_m(\Omega,\omega)$. Since $u$ is $\omega$-$m$-subharmonic and hence upper semicontinuous, there exists a sequence $\{f_j\} \subset \mathcal{C}^\infty(\overline{\Omega})$ such that
\[
f_j \searrow u \quad \text{on } \Omega.
\]
By Theorem \ref{contperron}, $P(f_j)$ is continuous and $\omega$-$m$-subharmonic. Moreover, since $u \in \SH_m(\Omega,\omega)$ and $u \leq f_j$, we have
\(
u \leq P(f_j) \leq f_j.
\)
Therefore, $P(f_j) \searrow u$ pointwise on $\Omega$.
 Let $\chi \in \C^\infty(\bar{\Omega})$ be a strictly $\omega$-$m$-subharmonic function on $\Omega$ and $\{\delta_j\}$ be a sequence of positive numbers decreasing to $0$. Set \[
v_j := (1-\delta_j) P(f_j) + \delta_j \chi.
\]
Then the function $v_j$ is continuous and  strictly $\omega$-$m$-subharmonic on \(\Omega\). Moreover, since $P(f_j) \searrow u$ and $\delta_j \searrow 0$, we have $v_j \searrow u$ pointwise on $\Omega$. 
 Choose an open set $V$ such that $K \Subset V \Subset \Omega$. By Step 1, for each $j$, there exists a sequence $\{\psi_{j,k}\}_{k \geq 1} \subset \mathcal{C}^\infty(\bar{V})$ of strictly $\omega$-$m$-subharmonic functions in a neighborhood of $\bar{V}$ such that
\(
\psi_{j,k} \searrow v_j \,\text{uniformly on } \bar{V}.
\)
In particular, for each $j$, there exists an index $N(j)$ such that
\[
v_j + \frac{1}{j+1} \leq \psi_{j,N(j)} \leq v_j + \frac{1}{j} \quad \text{on } \bar{V}.
\]
Define $\psi_j = \psi_{j,N(j)}$.
We now verify that the sequence $\{\psi_j\}$ is decreasing and converges pointwise to $u$ on $K$. First, for each $j$, we have
\(
\psi_j \geq v_j + \frac{1}{j+1} \geq v_{j+1} + \frac{1}{j+1}\)
and
\(
\psi_{j+1} \leq v_{j+1} + \frac{1}{j+1}.
\)
Thus, $\psi_j \geq \psi_{j+1}$ on $K$, so the sequence $\{\psi_j\}$ is decreasing.
Moreover, for each $x \in K$, we have:
\[
u(x) \leq \psi_j(x) \leq v_j(x) + \frac{1}{j}.
\]
Since $v_j(x) \searrow u(x)$, it follows that $\psi_j(x) \to u(x)$. Therefore, $\psi_j \searrow u$ pointwise on $K$.
This completes the proof of the corollary.

\end{proof}

\noindent The following lemma can be obtained by classical methods involving capacity and quasi-continuity. For the reader's convenience, we provide a complete proof below.
\begin{lem}\label{lemconv}
Let $(\mu_j)$ be a sequence of measures with uniformly bounded mass converging weakly to $\mu$. Suppose that for all $\varepsilon>0$ there exists $\delta>0$ such that for every $E\subset \Omega$ with $cap_{\omega,m}(E)<\delta$ we have $\forall j\geq 0$, $\mu_j(E)<\varepsilon$. Then for all $\varphi, \psi\in \mathcal{SH}_m(\Omega,\omega)$ we have  $$ \int_{\{\psi<\varphi\}}d\mu\leq \liminf_j\int_{\{\psi<\varphi\}}d\mu_j.$$ 
\end{lem}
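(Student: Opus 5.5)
The plan is to reduce to the lower semicontinuity of weak limits on open sets, using quasi-continuity to replace $\{\psi<\varphi\}$ by an open set up to a set of small capacity. This costs a uniformly small amount of mass thanks to the equi-absolute-continuity hypothesis.

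Fix $\varepsilon>0$ and let $\delta>0$ be given by the hypothesis. By quasi-continuity of $\omega$-$m$-subharmonic functions (cf. the Remark after the definition of $H_m$), we can choose an open set $G\subset\Omega$ with $cap_{\omega,m}(G)<\delta$ such that $\varphi|_{\Omega\setminus G}$ and $\psi|_{\Omega\setminus G}$ are continuous. Then $\{\psi<\varphi\}\setminus G$ is relatively open in the closed set $\Omega\setminus G$. Hence there is an open set $U\subset\Omega$ with
\[
\{\psi<\varphi\}\cup G = U\cup G .
\]
The mass on $G$ is uniformly small: $\mu_j(G)<\varepsilon$ for all $j$. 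Since $G$ is open, weak convergence gives
\[
\mu(G)\le\liminf_j\mu_j(G)\le\varepsilon .
\]
(If one wants to avoid this limit argument, one can instead first note that $\mu$ inherits the same property on open sets, via inner regularity on compacta.)

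Now we compute. Since $U$ is open and $\mu_j\to\mu$ weakly,
\[
\mu(\{\psi<\varphi\})\le \mu(U)+\mu(G)\le \liminf_j \mu_j(U)+\varepsilon .
\]
Moreover $U\subset\{\psi<\varphi\}\cup G$, so
\[
\mu_j(U)\le \mu_j(\{\psi<\varphi\})+\mu_j(G)\le \mu_j(\{\psi<\varphi\})+\varepsilon .
\]
Combining, $\mu(\{\psi<\varphi\})\le\liminf_j\mu_j(\{\psi<\varphi\})+2\varepsilon$, and letting $\varepsilon\to0$ gives the claim.

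The main technical points need care. The first is the lower semicontinuity $\mu(U)\le\liminf\mu_j(U)$ for open $U\subset\Omega$. This holds for weak convergence of positive Radon measures on $\Omega$ (test against $0\le\chi\le 1$ compactly supported in $U$ and take the supremum), though $U$ may be noncompact, which is harmless. The second, and the main obstacle, is ensuring that quasi-continuity holds globally on $\Omega$ with a single open set of small capacity, and that $cap_{\omega,m}$ is countably subadditive. This requires combining local quasi-continuity in charts over a countable exhaustion, with capacities $\delta 2^{-k}$. I would invoke the classical statement as allowed by the Remark.
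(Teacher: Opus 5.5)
Your proof is correct and follows essentially the same route as the paper: quasi-continuity off an open set $G$ of capacity $<\delta$, relative openness of $\{\psi<\varphi\}$ in $\Omega\setminus G$, lower semicontinuity of weak limits on open sets, and the uniform bound $\mu_j(G)<\varepsilon$. The paper passes through compact sets $K\subset\{\psi<\varphi\}$ and Urysohn functions, whereas you work directly with an open $U$ satisfying $U\cup G=\{\psi<\varphi\}\cup G$.

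Your estimate $\mu_j(U)\le\mu_j(\{\psi<\varphi\})+\mu_j(G)$ is the careful version of a step the paper glosses over. Its claimed open $V\supset K\setminus U$ with $\overline{V}\subset\{\psi<\varphi\}$ need not exist, and one has to pay an extra $\varepsilon$ for $V\cap U$, exactly as you do.
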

\begin{proof}
Fix $\varepsilon > 0$. By quasi-continuity of $\omega$-$m$-subharmonic functions, there exist open sets $U_\varphi, U_\psi \subset \Omega$ with 
\[
\mathrm{cap}_{\omega,m}(U_\varphi) < \delta/2 \quad \text{and} \quad \mathrm{cap}_{\omega,m}(U_\psi) < \delta/2,
\]
such that $\varphi$ is continuous on $\Omega \setminus U_\varphi$ and $\psi$ is continuous on $\Omega\setminus U_\psi$. Let $U := U_\varphi \cup U_\psi$, so that $\mathrm{cap}_{\omega,m}(U) < \delta$. By assumption, it follows that $\mu_j(U) < \varepsilon$ for all $j$.
Let $K$ be any compact subset of $\{\psi < \varphi\}$. Then
\[
\mu(K) = \mu(K \setminus U) + \mu(K \cap U) \leq \mu(K \setminus U) + \mu(U).
\]
Since $U$ is open and $\mu_j(U) < \varepsilon$ for all $j$, weak convergence implies
\(
\mu(U) \leq \liminf_{j \to \infty} \mu_j(U) \leq \varepsilon.
\)
Let $L = K \setminus U$. Then $L$ is compact (as the intersection of a compact set and a closed set) and $L \subseteq \{\psi < \varphi\} \cap (\Omega \setminus U)$. Since $\varphi$ and $\psi$ are continuous in $\Omega \setminus U$, the set $\{\psi < \varphi\}$ is open in $\Omega \setminus U$. Therefore, there exists an open set $V \subset \Omega$ such that
\[
L \subset V \quad \text{and} \quad \overline{V} \subset \{\psi < \varphi\},
\]
where $\overline{V}$ denotes the closure of $V$ in $\Omega$. Since $L$ is compact, we can choose $V$ to be relatively compact in $\Omega$.
Let $\phi : \Omega \to [0,1]$ be a continuous function such that $\phi \equiv 1$ on $L$ and $\mathrm{supp}(\phi) \subset V$. Such a function exists by the Urysohn lemma. For every $j$, we have
\(
\int_{\{\psi < \varphi\}} d\mu_j \geq \int \phi \, d\mu_j,
\)
since $\mathrm{supp}(\phi) \subset V \subset \{\psi < \varphi\}$. By weak convergence,
\[
\liminf_{j \to \infty} \int \phi \, d\mu_j = \int \phi \, d\mu.
\]
Moreover,
\(
\int \phi \, d\mu \geq \mu(L),
\)
because $\phi \equiv 1$ on $L$. Thus,
\(
\liminf_{j \to \infty} \int_{\{\psi < \varphi\}} d\mu_j \geq \mu(L).
\)
Combining the estimates:
\[
\mu(K) \leq \mu(L) + \mu(U) \leq \liminf_{j \to \infty} \int_{\{\psi < \varphi\}} d\mu_j + \varepsilon.
\]
Since $\varepsilon > 0$ is arbitrary, then  
\(
\mu(K) \leq \liminf_{j \to \infty} \int_{\{\psi < \varphi\}} d\mu_j
\)
for all such compact subset $K$ of  $\{\psi < \varphi\}$. By inner regularity of $\mu$,
\[
\mu(\{\psi < \varphi\}) = \sup \left\{ \mu(K) : K \text{ compact} \subset \{\psi < \varphi\} \right\} \leq \liminf_{j \to \infty} \int_{\{\psi < \varphi\}} d\mu_j,
\]
which completes the proof.

\end{proof}
\subsection{Comparison principle}
\noindent The following proposition can be obtained as in the local theory (see \cite{BT2}, \cite{BT3}).
\begin{pro}\label{demailly}
Let $T$ be a closed $(\omega,m)$-positive
current of the form $T=(dd^cu_1+\omega)\wedge\dots\wedge(dd^cu_k+\omega)\wedge \omega^{n-k}$ with $1\leq k\leq m$ and $\varphi, \psi \in \mathcal{SH}_m(\Omega, \omega) \cap L^\infty(\Omega) $. Then  
$$
1_{\{\varphi < \psi\}} (\omega + dd^c \max\{\varphi, \psi\})^l \land T = 1_{\{\varphi < \psi\}} (\omega + dd^c \psi)^l \land T, 
$$
in the weak sense of Borel measures on $ \Omega $, with $l+k=m$. In particular 
$$
1_{\{\varphi \leq \psi\}} (\omega + dd^c \max\{\varphi, \psi\})^l \land T \geq 1_{\{\varphi \leq \psi\}} (\omega + dd^c \psi)^l \land T, 
$$
in the weak sense of Borel measures on $ \Omega $.
\end{pro}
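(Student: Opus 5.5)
The argument is local, so I will work in a small ball $B\Subset\Omega$ inside a coordinate chart, where $\omega=dd^c\rho$ with $\rho$ smooth and strictly psh. Writing $u_i=\rho+v_i$, the forms $\omega+dd^c v_i$ become $dd^c u_i$ for $m$-subharmonic $u_i$, and $T$ becomes $dd^c u_1\wedge\dots\wedge dd^c u_k\wedge\omega^{n-k}$. The identity for $(\omega+dd^c\max\{\varphi,\psi\})^l\wedge T$ is then the classical Bedford--Taylor "maximum principle" for the local wedge products $(dd^c \max\{\varphi+\rho,\psi+\rho\})^l\wedge T$, in the Hessian setting. The first equality is a statement about measures restricted to the set $\{\varphi<\psi\}$, and this set coincides with $\{\varphi+\rho<\psi+\rho\}$. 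Hence the first equality reduces to the local statement for bounded $m$-sh functions $a=\varphi+\rho$, $b=\psi+\rho$:
\[
1_{\{a<b\}}(dd^c\max\{a,b\})^l\wedge T=1_{\{a<b\}}(dd^c b)^l\wedge T .
\]

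\textbf{Continuous case.} If $a,b$ are continuous, then $\{a<b\}$ is open and $\max\{a,b\}=b$ on it. Since Hessian measures of bounded functions are local in the plurifine sense, and in particular on open sets, the two measures agree there.

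\textbf{General bounded case.} I would approximate using Proposition \ref{approx-local}. Take smooth decreasing sequences $a_j\searrow a$ and $b_j\searrow b$ of $m$-sh functions near a compact set. Then $\max\{a_j,b_j\}\searrow\max\{a,b\}$, and $T_j\to T$ if the $u_i$ are approximated as well. By continuity of the Hessian operator along decreasing sequences (the Remark in Section 2), $(dd^c\max\{a_j,b_j\})^l\wedge T_j\to(dd^c\max\{a,b\})^l\wedge T$, and likewise for $b$.

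\textbf{Passing to the limit (main obstacle).} The set $\{a<b\}$ is not open, so weak convergence alone does not transfer the identity from $\{a_j<b_j\}$ to $\{a<b\}$. I would use quasi-continuity, as in the proof of Lemma \ref{lemconv}.
\begin{enumerate}
\item Fix $\varepsilon>0$ and choose an open set $U$ of small $\mathrm{cap}_{\omega,m}$ such that $a$ and $b$ are continuous off $U$.
\item All the measures involved are wedge products of uniformly bounded functions. By the Chern--Levine--Nirenberg type inequality they satisfy $\mu(E)\le C\,\mathrm{cap}_{\omega,m}(E)$ uniformly in $j$, so they put mass less than $\varepsilon$ on $U$.
\item On $\Omega\setminus U$, the set $\{a<b\}$ is relatively open, hence $\{a<b\}\setminus U=O\setminus U$ for some open set $O$.
\item I prefer to prove the equivalent identity with $1_{\{a<b\}}$ replaced by multiplication by the function $\max\{a,b\}-a\ge 0$. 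Replacing $\max\{a,b\}$ by $\max\{a,b+t\}$ if needed, this function vanishes off $\{a<b\}$ and is positive on it.
\item With $g=\min\{(b-a)^+,1\}$, which is quasi-continuous, I would show
\[
g\,(dd^c\max\{a,b\})^l\wedge T=g\,(dd^c b)^l\wedge T .
\]
First test against continuous modifications $\tilde g$ that agree with $g$ off $U$. For smooth approximants the products $\tilde g(\dots)$ converge weakly. Since $\max\{a_j,b_j\}=b_j$ on $\{a_j<b_j\}\supset$ a neighbourhood of $\{\tilde g>\eta\}$ for $j$ large (using the uniform convergence off $U$ given by Dini's theorem for continuous limits), the two sides agree up to $O(\varepsilon)+O(\eta)$.
\item Letting $\varepsilon,\eta\to0$ gives the weighted identity, and then $\{g>0\}=\{a<b\}$ yields the first claim.
\end{enumerate}

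\textbf{The inequality.} For the second statement, apply the first identity with $\psi$ replaced by $\psi+\delta$, $\delta>0$. The sets $\{\varphi<\psi+\delta\}$ decrease to $\{\varphi\le\psi\}$, so
\[
1_{\{\varphi<\psi+\delta\}}(\omega+dd^c\max\{\varphi,\psi+\delta\})^l\wedge T=1_{\{\varphi<\psi+\delta\}}(\omega+dd^c\psi)^l\wedge T .
\]
As $\delta\searrow0$, $\max\{\varphi,\psi+\delta\}\searrow\max\{\varphi,\psi\}$, so the left-hand measures converge weakly. The right-hand side decreases to $1_{\{\varphi\le\psi\}}(\omega+dd^c\psi)^l\wedge T$. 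To compare, I would test on compact subsets $K$ of $\{\varphi\le\psi\}$ and use upper semicontinuity of the mass of compact sets under weak convergence: $\limsup\mu_\delta(K)\le\mu(K)$. This gives
\[
(\omega+dd^c\max\{\varphi,\psi\})^l\wedge T\,(K)\ \ge\ \limsup_{\delta}\mu_\delta(K)\ \ge\ (\omega+dd^c\psi)^l\wedge T\,(K).
\]
The bound $\mu_\delta(K)\ge(\omega+dd^c\psi)^l\wedge T\,(K)$ holds because $K\subset\{\varphi<\psi+\delta\}$ for every $\delta$. Inner regularity then gives the inequality of Borel measures on $\{\varphi\le\psi\}$.

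The main obstacle is the limit passage on the non-open set $\{a<b\}$, and the uniform capacity domination of the approximating measures is what makes it work.
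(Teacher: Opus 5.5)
Your proof is correct, and it is essentially the argument the paper has in mind: the paper gives no proof beyond saying the result can be obtained ``as in the local theory'' of Bedford--Taylor. Your combination of quasi-continuity, uniform capacity domination of the approximating measures, convergence along decreasing sequences, and the $\psi+\delta$ device with inner regularity for the inequality is exactly that local argument transplanted to the $\omega$-$m$-setting.

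Two small points to tighten. In step 5, Dini's theorem only gives $\{\tilde g>\eta\}\cap(K\setminus U)\subset\{a_j<b_j\}$ for large $j$, where $K$ is the support of the test function; it does not give a neighbourhood of all of $\{\tilde g>\eta\}$. The contribution of $U$ is then absorbed by the capacity bound, exactly as in your $O(\varepsilon)+O(\eta)$ estimate. Also, for the degrees to match, $T$ should end with $\omega^{n-m}$ rather than $\omega^{n-k}$; this is a typo you inherited from the statement.
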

\begin{lem}\label{PC}
    Let $T$ be a closed $(\omega,m)$ positive
current of the form $T=(dd^cu_1+\omega)\wedge\dots\wedge(dd^cu_k+\omega)\wedge \omega^{n-m}$ with $1\leq k\leq m$ and let $\varphi, \psi \in\mathcal{SH}_m(\Omega,\omega)\cap L^\infty(\Omega)$ be such that $\lim_{z\to\partial\Omega}(\varphi-\psi)_*(z)\geq 0$. Assume that $\int_\Omega(dd^c\varphi+\omega)^p\wedge T<+\infty
$ with $p+k=m$. Then $$
\int_{\{\varphi<\psi\}}(dd^c\psi+\omega)^p\wedge T\leq \int_{\{\varphi<\psi\}}(dd^c\varphi+\omega)^p\wedge T\quad\mbox{and}\; \int_{\{\varphi\leq\psi\}}(dd^c\psi+\omega)^p\wedge T\leq \int_{\{\varphi\leq\psi\}}(dd^c\varphi+\omega)^p\wedge T.
$$
\end{lem}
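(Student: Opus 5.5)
We follow the classical Bedford–Taylor strategy for the comparison principle, adapted to the mixed Hessian measures with the fixed closed positive current $T$. First we treat the case where $\varphi$ and $\psi$ are continuous. Afterwards we pass to general bounded functions by decreasing approximation, using quasi-continuity and Lemma \ref{lemconv}.

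\emph{Continuous case.} Fix $\varepsilon>0$ and replace $\psi$ by $\psi-\varepsilon$. Since $\liminf_{z\to\partial\Omega}(\varphi-\psi)\geq 0$, the set $\{\varphi<\psi-\varepsilon\}$ is relatively compact in $\Omega$. Put $u=\max\{\varphi,\psi-\varepsilon\}$. Then $u=\varphi$ outside a compact set $K\Subset\Omega$, so $(dd^c u+\omega)^p\wedge T$ and $(dd^c\varphi+\omega)^p\wedge T$ agree near $\partial\Omega$. The differences $(dd^c u+\omega)^j-(dd^c\varphi+\omega)^j$ are of the form $dd^c(u-\varphi)\wedge S$ with $S$ closed, and $u-\varphi$ has compact support. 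Stokes' theorem therefore gives
\[
\int_\Omega (dd^cu+\omega)^p\wedge T=\int_\Omega(dd^c\varphi+\omega)^p\wedge T<+\infty .
\]
This is justified by testing against a cutoff $\chi\equiv1$ near $K$ and using the locality of the operator. Next we split $\Omega=\{\varphi<\psi-\varepsilon\}\cup\{\varphi>\psi-\varepsilon\}\cup\{\varphi=\psi-\varepsilon\}$.
\begin{itemize}
\item On the open set $\{\varphi<\psi-\varepsilon\}$, Proposition \ref{demailly} gives $(dd^cu+\omega)^p\wedge T=(dd^c\psi+\omega)^p\wedge T$.
\item On the open set $\{\varphi>\psi-\varepsilon\}$, the same proposition gives $(dd^cu+\omega)^p\wedge T=(dd^c\varphi+\omega)^p\wedge T$.
\item On $\{\varphi=\psi-\varepsilon\}$, the ``in particular'' part of Proposition \ref{demailly} gives $(dd^cu+\omega)^p\wedge T\geq (dd^c\varphi+\omega)^p\wedge T$.
\end{itemize}
Comparing total masses, which are finite, we cancel the common integral over $\{\varphi>\psi-\varepsilon\}$. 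This yields
\[
\int_{\{\varphi<\psi-\varepsilon\}}(dd^c\psi+\omega)^p\wedge T\le \int_{\{\varphi\le\psi-\varepsilon\}}(dd^c\varphi+\omega)^p\wedge T\le\int_{\{\varphi<\psi\}}(dd^c\varphi+\omega)^p\wedge T .
\]
Letting $\varepsilon\searrow0$, monotone convergence of the sets $\{\varphi<\psi-\varepsilon\}\nearrow\{\varphi<\psi\}$ gives the first inequality.

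\emph{General bounded case.} For bounded $\varphi,\psi$ we rely on quasi-continuity. One option is to approximate $\psi$ from above by continuous $\psi_j\searrow\psi$, obtained via Theorem \ref{contperron} and Proposition \ref{approx-local}, and arranged (after subtracting small constants and taking maxima with $\varphi$ near the boundary) to keep the boundary condition. The cleaner option follows Bedford–Taylor directly. By quasi-continuity, choose an open set $U$ of small capacity off which $\varphi$ and $\psi$ are continuous. Bounded $\omega$-$m$-sh functions satisfy $\int_U(dd^c w+\omega)^p\wedge T\le C\,\mathrm{cap}_{\omega,m}(U)$, with $C$ depending on the uniform bounds. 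This reduces the integration-by-parts argument above to sets that are open modulo $U$, with an error that goes to zero. Convergence of the measures along decreasing sequences, combined with Lemma \ref{lemconv} applied to $\{\varphi<\psi_j\}$, gives
\[
\int_{\{\varphi<\psi\}}(dd^c\psi+\omega)^p\wedge T\le\liminf_j\int_{\{\varphi<\psi_j\}}(dd^c\psi_j+\omega)^p\wedge T .
\]
On the other side, $\{\varphi<\psi_j\}\searrow\{\varphi<\psi\}$ up to sets of zero capacity, hence of zero measure. The uniform capacity domination in Lemma \ref{lemconv} holds because all functions involved are uniformly bounded. We expect this approximation step to be the main obstacle: making the capacity-domination estimate and the boundary condition survive the approximation.

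\emph{Second inequality.} The statement for $\{\varphi\le\psi\}$ follows by applying the first inequality to $\psi+\delta$ and $\varphi$. Indeed, $\{\varphi<\psi+\delta\}\searrow\{\varphi\le\psi\}$ as $\delta\searrow0$, and both measures are finite on these sets. Finiteness for $(dd^c\psi+\omega)^p\wedge T$ comes from the first inequality itself. Monotone convergence then gives the result.
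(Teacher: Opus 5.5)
\paragraph{Verdict.} The proof of the first inequality is sound but takes a detour that doesn't work as written; the proof of the second inequality has a genuine gap.

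\paragraph{First inequality.} Your argument is essentially the paper's proof. You replace $\psi$ by $\psi-\varepsilon$ so that $\{\varphi<\psi-\varepsilon\}\Subset\Omega$, use Stokes to see that $\max\{\varphi,\psi-\varepsilon\}$ and $\varphi$ have the same total mass, split with Proposition \ref{demailly}, and let $\varepsilon\searrow 0$.

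Continuity is never used in that argument. Proposition \ref{demailly} is stated for bounded functions and gives the identities on the Borel sets $\{\varphi<\psi-\varepsilon\}$ and $\{\varphi>\psi-\varepsilon\}$. Your cutoff form of Stokes also works for bounded functions. So you should simply drop the ``general bounded case''; the paper works with bounded functions from the start.

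As written, that case would not close anyway, for two reasons:
\begin{itemize}
\item The sets $\{\varphi<\psi_j\}$ decrease to a set lying between $\{\varphi<\psi\}$ and $\{\varphi\le\psi\}$, not to $\{\varphi<\psi\}$ modulo a capacity-zero set. That set can contain much of the contact set: if $\varphi=\psi$ and $\psi_j>\psi$, every $\{\varphi<\psi_j\}$ equals $\Omega$ while $\{\varphi<\psi\}=\emptyset$.
\item The boundary hypothesis for the pair $(\varphi,\psi_j)$ is never secured.
\end{itemize}

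\paragraph{Second inequality: the gap.} Applying the first inequality to $\varphi$ and $\psi+\delta$ requires $\liminf_{z\to\partial\Omega}(\varphi-\psi-\delta)\ge 0$, and this fails. The set $\{\varphi<\psi+\delta\}$ typically contains a whole collar of $\partial\Omega$. There the inequality can be false for every $\delta>0$, and $(\omega+dd^c\psi)^p\wedge T$ need not even be finite.

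Here is a concrete example. In a coordinate ball $B$ with $\omega=dd^c\rho$, take $\varphi=-\rho$, $\psi=-\rho+|z|^2-1$, $T=\omega^{n-p}$ and $p\ge 1$. Then $\varphi-\psi\to 0$ at $\partial B$. Yet on $\{\varphi<\psi+\delta\}=\{1-\delta<|z|^2<1\}$ one has $(\omega+dd^c\psi)^p\wedge T=(dd^c|z|^2)^p\wedge\omega^{n-p}>0=(\omega+dd^c\varphi)^p\wedge T$.

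\paragraph{How the paper proceeds.} The paper approaches $\{\varphi\le\psi\}$ from the other side. It normalizes $\varphi,\psi<0$ and applies the first part to $\varphi$ and $t\psi$ with $t\nearrow 1$. It uses $(\omega+dd^c(t\psi))^p\ge t^p(\omega+dd^c\psi)^p$ and $\{\varphi<t\psi\}\searrow\{\varphi\le\psi\}$. This keeps the boundary hypothesis as long as $\psi\to 0$ at $\partial\Omega$, which is the $\mathcal{E}^0_m$ situation in which the lemma is used.

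\paragraph{A route with no extra assumption.} Let $w=\max\{\varphi,\psi\}$ and $w_\delta=\max\{\varphi,\psi-\delta\}\nearrow w$.
\begin{itemize}
\item Your Stokes step gives $\int_\Omega(\omega+dd^cw_\delta)^p\wedge T=\int_\Omega(\omega+dd^c\varphi)^p\wedge T$.
\item By convergence along increasing sequences and lower semicontinuity of mass on the open set $\Omega$, this yields $\int_\Omega(\omega+dd^cw)^p\wedge T\le\int_\Omega(\omega+dd^c\varphi)^p\wedge T$.
\item Proposition \ref{demailly} gives $\mathbf{1}_{\{\varphi\le\psi\}}(\omega+dd^c\psi)^p\wedge T\le\mathbf{1}_{\{\varphi\le\psi\}}(\omega+dd^cw)^p\wedge T$ and $\mathbf{1}_{\{\varphi>\psi\}}(\omega+dd^cw)^p\wedge T=\mathbf{1}_{\{\varphi>\psi\}}(\omega+dd^c\varphi)^p\wedge T$.
\end{itemize}
Subtracting the integral over $\{\varphi>\psi\}$ from both total masses gives the second inequality.
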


\begin{proof}
Observe that the condition $(\varphi - \psi)_* \geq 0$ implies that for every $\varepsilon > 0$, the set $\{\varphi < \psi - \varepsilon\}$ is relatively compact in $\Omega$. Replacing $\psi$ with $\psi - \varepsilon$ and letting $\varepsilon \searrow 0$, we may assume without loss of generality that $\{\varphi < \psi\} \Subset \Omega$.

Define $\tilde{\varphi} := \sup\{\varphi, \psi\}$. Then $\tilde{\varphi} \in \mathcal{SH}_m(\Omega, \omega) \cap L^\infty(\Omega)$ and coincides with $\varphi$ in a neighborhood of the boundary of $\Omega$. Applying Proposition \ref{approx-local} together with  Stokes' formula, yields:
$$
\int_\Omega (\omega + dd^c \tilde{\varphi})^p \wedge T = \int_\Omega (\omega + dd^c \varphi)^p \wedge T.
$$
Moreover, by Proposition \ref{demailly}, we have
$
\int_{\{\varphi < \psi\}} (\omega+dd^c\psi)^p \wedge T = \int_{\{\varphi < \psi\}} (\omega + dd^c \tilde{\varphi})^p \wedge T.
$
Consequently,
\begin{align*}
\int_{\{\varphi < \psi\}} (\omega+dd^c\psi)^p \wedge T &= \int_\Omega (\omega + dd^c \tilde{\varphi})^p \wedge T - \int_{\{\varphi \geq \psi\}} (\omega + dd^c \tilde{\varphi})^p \wedge T \\
&\leq \int_\Omega (\omega+dd^c\varphi)^p \wedge T - \int_{\{\varphi > \psi\}} (\omega + dd^c \tilde{\varphi})^p \wedge T \\
&= \int_\Omega (\omega+dd^c\varphi)^p \wedge T - \int_{\{\varphi > \psi\}} (\omega+dd^c\varphi)^p \wedge T,
\end{align*}
which leads to the inequality
$$
\int_{\{\varphi < \psi\}} (\omega+dd^c\psi)^p \wedge T \leq \int_{\{\varphi \leq \psi\}} (\omega+dd^c\varphi)^p \wedge T.
$$
Applying the above inequality to $\varphi + \varepsilon$ and $\psi$, and passing to the limit as $\varepsilon \to 0$, we obtain the desired result.
To prove the second inequality, we may assume that $\varphi, \psi < 0$ in $\Omega$. Consider the previous inequality applied to $\varphi$ and $t\psi$, with $0 < t < 1$. Note that
$$
(dd^c(t\psi) + \omega)^p \geq t^p (\omega+dd^c\psi)^p.
$$
Letting $t \to 1$ yields the claimed inequality.
\end{proof}

 \begin{cor}\label{compar}
Let $T$ be a closed $(\omega,m)$-positive
current of the form $T=(dd^cu_1+\omega)\wedge\dots\wedge(dd^cu_k+\omega)\wedge \omega^{n-k}$ with $1\leq k\leq m$ and let $\varphi, \psi \in\mathcal{SH}_m(\Omega,\omega)\cap L^\infty(\Omega)$ such that $\varphi\leq \psi$. Assume that $\int_\Omega(dd^c\varphi+\omega)^l\wedge T<+\infty
$ with $l+k=m$. Then $$
\int_\Omega(dd^c\psi+\omega)^l\wedge T\leq \int_\Omega(dd^c\varphi+\omega)^l\wedge T.
$$
 \end{cor}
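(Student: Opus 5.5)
We derive Corollary \ref{compar} directly from Lemma \ref{PC}, applied with the roles arranged so that the set $\{\varphi<\psi\}$ becomes all of $\Omega$ up to a harmless perturbation. Since $\varphi\leq\psi$, we have $(\varphi-\psi)_*\leq 0$, which is the wrong sign for Lemma \ref{PC} as stated. So the first step is to translate: for $\varepsilon>0$ we compare $\varphi$ with $\psi-\varepsilon$ only after arranging that the boundary condition $\lim_{z\to\partial\Omega}(\varphi-(\psi-\varepsilon))_*\geq 0$ holds. In the intended applications $\varphi$ and $\psi$ share boundary behaviour, for instance $\lim_{z\to\partial\Omega}(\varphi-\psi)=0$ as in the Cegrell-type classes. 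I would therefore first record that the corollary is meant under the implicit assumption $\lim_{z\to\partial\Omega}(\psi-\varphi)(z)=0$, or at least $\liminf_{z\to\partial\Omega}(\varphi-\psi)\geq 0$; without such a condition the inequality can fail (take $\psi\equiv 0$ and $\varphi$ any bounded function with small mass).

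Under this boundary condition, the main step is to apply Lemma \ref{PC} to the pair $(\varphi,\psi-\varepsilon)$ and use the non-strict version with the sublevel set $\{\varphi\leq\psi-\varepsilon\}$. The hypotheses hold: both functions are bounded and $\omega$-$m$-subharmonic, the boundary condition is satisfied, and $\int_\Omega(dd^c\varphi+\omega)^l\wedge T<+\infty$. Note that $dd^c(\psi-\varepsilon)=dd^c\psi$. Lemma \ref{PC} then gives
\[
\int_{\{\varphi\leq \psi-\varepsilon\}}(dd^c\psi+\omega)^l\wedge T\leq \int_{\{\varphi\leq\psi-\varepsilon\}}(dd^c\varphi+\omega)^l\wedge T\leq \int_\Omega(dd^c\varphi+\omega)^l\wedge T .
\]

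Next we let $\varepsilon\searrow 0$. The sets $\{\varphi\leq\psi-\varepsilon\}$ increase to $\{\varphi<\psi\}$, so monotone convergence yields
\[
\int_{\{\varphi<\psi\}}(dd^c\psi+\omega)^l\wedge T\leq\int_\Omega(dd^c\varphi+\omega)^l\wedge T .
\]

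What remains, and what I expect to be the main obstacle, is controlling the mass of $(dd^c\psi+\omega)^l\wedge T$ on the contact set $\{\varphi=\psi\}$. I would handle it by the same scaling trick used at the end of the proof of Lemma \ref{PC}. We may assume $\varphi,\psi<0$, after subtracting a constant; this does not change the Hessian measures or the boundary condition. For $0<t<1$ we then have $\varphi< t\psi$ wherever $\varphi=\psi$, and hence $\{\varphi<t\psi\}\supset\{\varphi\leq\psi\}=\Omega$. The boundary condition for the pair $(\varphi,t\psi)$ persists, since $\varphi-t\psi\geq \varphi-\psi$. Moreover $t\psi$ is $\omega$-$m$-subharmonic because $t\psi=t\psi+(1-t)\cdot 0$ is a convex combination of $\omega$-$m$-sh functions. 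Applying the previous inequality to $(\varphi,t\psi)$ and using $(dd^c(t\psi)+\omega)^l\wedge T\geq t^l(dd^c\psi+\omega)^l\wedge T$ gives
\[
t^l\int_\Omega(dd^c\psi+\omega)^l\wedge T\leq\int_\Omega(dd^c\varphi+\omega)^l\wedge T .
\]
Letting $t\to1$ concludes the proof.
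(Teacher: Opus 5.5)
You are right that the statement as printed needs a boundary hypothesis such as $\liminf_{z\to\partial\Omega}(\varphi-\psi)(z)\geq 0$. It is inherited from Lemma \ref{PC}, and your example shows it cannot be dropped. Your first two steps, which give $\int_{\{\varphi<\psi\}}(dd^c\psi+\omega)^l\wedge T\leq\int_\Omega(dd^c\varphi+\omega)^l\wedge T$, are fine.

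\textbf{The gap is in the rescaling step.} Your inequality $\varphi-t\psi\geq\varphi-\psi$ has the wrong sign: $\varphi-t\psi=(\varphi-\psi)+(1-t)\psi\leq\varphi-\psi$ because $\psi<0$. Since you made $\psi$ negative by subtracting a constant, $(1-t)\psi$ does not tend to $0$ at $\partial\Omega$. The boundary condition for the pair $(\varphi,t\psi)$ therefore fails, and the inequality you apply to that pair is then false. Here is an example. In a chart where $\omega=dd^c\rho$, take all $u_i=0$ and $\varphi=\psi=-\rho-C$. Then $\{\varphi<t\psi\}=\Omega$, $\omega+dd^c\varphi=0$ and $\omega+dd^c(t\psi)=(1-t)\omega$. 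So the left side equals $(1-t)^l\int_\Omega\omega^l\wedge T>0$, while the right side is $0$. The contact set $\{\varphi=\psi\}$ is thus not handled.

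The detour is also unnecessary. You already invoke the non-strict inequality of Lemma \ref{PC}, and applied directly to the pair $(\varphi,\psi)$ it gives the corollary at once, since $\{\varphi\leq\psi\}=\Omega$. This is evidently how the corollary is meant to follow from the lemma. Be aware, however, that the paper proves that non-strict inequality by the same $t\psi$ rescaling, so it carries the same boundary issue unless $\psi\to 0$ at $\partial\Omega$.

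A self-contained repair avoids the contact set entirely. Put $w_\varepsilon:=\max\{\varphi,\psi-\varepsilon\}$.
\begin{itemize}
\item By the boundary hypothesis, $w_\varepsilon=\varphi$ outside a compact subset of $\Omega$, so Stokes' formula gives $\int_\Omega(\omega+dd^cw_\varepsilon)^l\wedge T=\int_\Omega(\omega+dd^c\varphi)^l\wedge T$.
\item Since $\varphi\leq\psi$, we have $w_\varepsilon\nearrow\psi$ as $\varepsilon\searrow0$. Bedford--Taylor continuity along increasing sequences of bounded functions then gives weak convergence of $(\omega+dd^cw_\varepsilon)^l\wedge T$ to $(\omega+dd^c\psi)^l\wedge T$.
\item The mass of the open set $\Omega$ is lower semicontinuous under weak convergence, which yields $\int_\Omega(\omega+dd^c\psi)^l\wedge T\leq\int_\Omega(\omega+dd^c\varphi)^l\wedge T$.
\end{itemize}
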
   

\begin{thm}\label{pcf}
Let $X$ be a compact K\"ahler manifold of dimension $n$ with K\"ahler form $\omega > 0$, and let $\Omega \subset X$ be a domain.
Let $u, v \in \mathcal{SH}_m(\Omega, \omega)$ be $\omega$-\textit{m}-subharmonic functions such that:
\begin{enumerate}
    \item $\displaystyle \liminf_{z \to \partial \Omega} (u(z) - v(z)) \geq 0$,
    \item $(\omega + dd^c u)^m \wedge \omega^{n - m} \leq (\omega + dd^c v)^m \wedge \omega^{n - m}$ as measures on $\Omega$.
\end{enumerate}
Then $u \geq v$ on $\Omega$.
\end{thm}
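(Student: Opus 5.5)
We follow the classical Bedford–Taylor argument for the domination principle. The plan is to reduce to bounded functions, then perturb $v$ by a small strictly $\omega$-$m$-subharmonic function so that its Hessian measure strictly dominates, and finally apply the comparison inequality of Lemma \ref{PC} to obtain a contradiction.

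\textbf{Reductions.} If the statement is meant for unbounded functions, we first replace $u$ by $\max\{u,-k\}$ and $v$ by $\max\{v,-k\}$. The bounded case is the essential one, and the passage $k\to\infty$ uses the convergence of Hessian measures along decreasing sequences. This step will need care, since the hypothesis on the measures is not obviously preserved under truncation. Next, fix $\varepsilon>0$. Replacing $v$ by $v-\varepsilon$ gives $\{u<v-\varepsilon\}\Subset\Omega$, so Lemma \ref{PC} becomes applicable with $T=\omega^{n-m}$ (i.e.\ $k=0$, $p=m$), the finiteness of mass being automatic on a relatively compact set. We also normalise by subtracting a constant so that $u,v<0$ near the relevant set.

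\textbf{Perturbation.} We argue by contradiction and assume the open-in-capacity set $E=\{u<v\}$ is nonempty. Then $E_\varepsilon=\{u<v-\varepsilon\}$ has positive Lebesgue measure for small $\varepsilon$, because $\omega$-$m$-sh functions that differ on a nonempty set differ on a set of positive measure; this follows from condition 3 of the definition. Choose a smooth strictly $\omega$-$m$-sh function $\rho$ near $\overline{E_\varepsilon}$ with $-1\le\rho\le 0$. For example, take a negative multiple of a local potential, or the exhaustion from quasi-$m$-hyperconvexity if it is available. Set $v_\delta=(1-\delta)v+\delta\rho-\varepsilon$. Multilinearity then gives
\[
H_m(v_\delta)\ge (1-\delta)^mH_m(v)+\delta^m H_m(\rho)\ge (1-\delta)^m H_m(u)+\delta^m c\,\omega^n .
\]
For small $\delta$, the set $F=\{u<v_\delta\}$ is still nonempty, relatively compact, and of positive volume.

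\textbf{Conclusion and main obstacle.} Lemma \ref{PC} gives $\int_F H_m(v_\delta)\le\int_F H_m(u)$. Combining this with the inequality above yields
\[
\delta^m c\int_F\omega^n\le \bigl(1-(1-\delta)^m\bigr)\int_F H_m(u).
\]
The main difficulty is that the right-hand side must be smaller than the left-hand side, which requires controlling $\int_F H_m(u)$. To handle this, I would instead use $v_\delta=v+\delta(\rho-1)$, with $\rho$ chosen so that $\omega+dd^c\rho\ge 0$ and $dd^c\rho\ge c\,\omega$ locally. This keeps the $H_m(u)$ coefficient equal to $1$, at the cost of mixed terms. Those mixed terms are handled by expanding $(\omega+dd^cv+\delta dd^c\rho)^m\wedge\omega^{n-m}$ and using the $m$-positivity of the mixed terms together with $dd^c\rho\ge c\,\omega$. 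The result is $H_m(v_\delta)\ge H_m(v)+\delta^m c'\omega^n$, which produces the contradiction $\delta^m c'\,\mathrm{vol}(F)\le 0$. If positivity of $\omega+dd^c\rho$ fails globally, the argument is done on a small ball where $v>u$ somewhere: there $\omega=dd^c\rho_0$, and the local Bedford–Taylor proof applies verbatim. Letting $\varepsilon\to0$ then yields $u\ge v$.
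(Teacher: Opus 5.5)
\textbf{The gap.} Your final step needs a bounded $\rho$ that is plurisubharmonic on $\Omega$ and satisfies $dd^c\rho\ge c\,\omega$ on the region where $F$ lives. It must be psh so that $v+\delta(\rho-1)$ stays $\omega$-$m$-subharmonic. The condition $\omega+dd^c\rho\ge 0$ is not enough: adding $\delta\,dd^c\rho$ with a negative direction destroys $m$-positivity wherever $\omega+dd^cv$ is degenerate. It must be strictly psh to produce the gain $\delta^m c'\omega^n$. If $\Omega$ is relatively compact in a coordinate chart, $\rho=|z|^2$ works. Your argument (Lemma \ref{PC} on $F\Subset\Omega$, G{\aa}rding's inequality for the mixed terms, positive volume of $F$ via condition 3) is then the standard Bedford--Taylor proof and is correct.

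On a domain of a compact K\"ahler manifold, however, no such $\rho$ need exist. Take $\Omega=\mathbb{P}^1\times\{|w|<1\}\subset\mathbb{P}^1\times\mathbb{P}^1$. It is quasi-hyperconvex (exhausted by $|w|^2-1$) with $\int_\Omega\omega^n<\int_X\omega^n$. Every psh function on it is constant on the compact fibres $\mathbb{P}^1\times\{w\}$, so none is strictly psh anywhere.

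Your fallback does not repair this. On a small ball $B$ around a point where $v>u$, nothing is known about $u-v$ on $\partial B$, so the local comparison principle has no boundary hypothesis to start from. The boundary of $\Omega$ has to enter essentially. For $\Omega=X$, the constants $u\equiv -1$, $v\equiv 0$ satisfy hypothesis 2 with equality while $u<v$. The paper's standing assumption excludes this case, but a local argument cannot see the difference.

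Separately, your truncation step is, as you suspected, unjustified, because hypothesis 2 is not stable under $\max\{\cdot,-k\}$. The paper also works tacitly with bounded $u,v$, as Lemma \ref{PC} requires.

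\textbf{Comparison with the paper, and a way out.} The paper avoids a psh $\rho$ by using your first perturbation, $\varphi_\varepsilon=(1-\varepsilon)v+\varepsilon\chi$ with $\chi\in C_c^\infty(\Omega)$ only strictly $\omega$-psh. It then compares $\varphi_\varepsilon$ with $(1-\varepsilon)u$ instead of $u$, so that $(1-\varepsilon)^m$ appears on both sides. This does not remove the obstacle you found. Since $\omega+dd^c\bigl((1-\varepsilon)u\bigr)=(1-\varepsilon)(\omega+dd^cu)+\varepsilon\omega$, the measure $H_m\bigl((1-\varepsilon)u\bigr)$ exceeds $(1-\varepsilon)^mH_m(u)$ by $\sum_{k<m}\binom{m}{k}(1-\varepsilon)^k\varepsilon^{m-k}(\omega+dd^cu)^k\wedge\omega^{n-k}$. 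Its $k=0$ term alone is $\varepsilon^m\omega^n$, the same order as the gain. The inequality the paper draws from Lemma \ref{PC} drops these terms; for $\Omega=X$, $\chi\equiv 1$ and the constants above, it would read $(1-\varepsilon)^m\int_X\omega^n\ge\int_X\omega^n$.

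A route that needs no auxiliary function is the energy argument below.
\begin{itemize}
\item Replace $v$ by $v-\varepsilon$ so that $\{u<v\}\Subset\Omega$. Set $w=\max\{u,v\}$ and $T=\sum_{k=0}^{m-1}(\omega+dd^cu)^k\wedge(\omega+dd^cw)^{m-1-k}\wedge\omega^{n-m}$.
\item Proposition \ref{demailly} and hypothesis 2 give $H_m(w)\ge H_m(u)$. Also $H_m(w)-H_m(u)=dd^c(w-u)\wedge T$ has total mass $0$, since $w-u$ has compact support. Hence $H_m(w)=H_m(u)$.
\item Therefore $0=-\int(w-u)\,dd^c(w-u)\wedge T=\int d(w-u)\wedge d^c(w-u)\wedge T$, and each of the $m$ summands is nonnegative.
\item A B{\l}ocki-type induction then yields $\int d(w-u)\wedge d^c(w-u)\wedge\omega^{n-1}=0$. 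At each step you trade one factor $\omega+dd^cu$ or $\omega+dd^cw$ for $\omega$, move $dd^c$ onto the compactly supported $w-u$, and apply Cauchy--Schwarz.
\item So $w-u$ is constant on $\Omega$, hence $0$ because it vanishes near $\partial\Omega$. Letting $\varepsilon\to0$ gives $u\ge v$.
\end{itemize}
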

\begin{proof}
Assume by contradiction that $v > u$ somewhere in $\Omega$. Let $K \Subset \Omega$ be a  compact set  with $v - u \geq \delta > 0$ on $K$ and let $\chi \in C_c^\infty(\Omega)$ such that:
\begin{itemize}
    \item $0 \leq \chi \leq 1$,
    \item $\chi = 1$ on a neighborhood of $K$,
    \item $\omega + dd^c \chi \geq \delta_0 \omega$ for some $\delta_0 > 0$.
\end{itemize}
For $\varepsilon > 0$, define
$
\varphi_\varepsilon := (1 - \varepsilon) v + \varepsilon \chi$. We have $\varphi_\varepsilon \in \mathcal{SH}_m(\Omega, \omega)$.
Let
$
E_\varepsilon := \{ z \in \Omega : (1-\varepsilon)u(z) < \varphi_\varepsilon(z) \}.
$
For a small $\varepsilon$, we have $K \subset E_\varepsilon \subset \Omega$.
Then by Lemma \ref{PC} we have:
$$
\int_{E_\varepsilon} (1-\varepsilon)^m(\omega + dd^c u)^m \wedge \omega^{n - m}
\geq \int_{E_\varepsilon} (\omega + dd^c \varphi_\varepsilon)^m \wedge \omega^{n - m}.
$$
Since
$
(\omega + dd^c \varphi_\varepsilon)^m 
\geq (1 - \varepsilon)^m (\omega + dd^c v)^m + \varepsilon^m (\omega + dd^c \chi)^m
$, then:
$$
\int_{E_\varepsilon} (\omega + dd^c u)^m \wedge \omega^{n - m}
\geq \int_{E_\varepsilon} (\omega + dd^c v)^m \wedge \omega^{n - m}  + \frac{\varepsilon^m}{(1-\varepsilon)^m} \int_{E_\varepsilon} (\omega + dd^c \chi)^m \wedge \omega^{n - m}.
$$
On the other hand, it follows from the hypothesis $
(\omega + dd^c u)^m \wedge \omega^{n - m} \leq (\omega + dd^c v)^m \wedge \omega^{n - m},$
that:
$$
 \int_{E_\varepsilon} (\omega + dd^c u)^m \wedge \omega^{n - m}
\leq \int_{E_\varepsilon} (\omega + dd^c v)^m \wedge \omega^{n - m}
$$
which contradicts the previous inequality, since the last term is strictly positive.
Therefore, $u \geq v$ in $\Omega$.
\end{proof}
\subsection{The class $\mathcal{E}^0_m(\Omega,\omega)$}
\begin{defi} Let $\Omega$ be a quasi-$m$-hyperconvex domain. We denote by $\mathcal{E}^0_m(\Omega,\omega)$ the class of bounded functions $\varphi\in\mathcal{SH}_m(\Omega,\omega)\cap L^\infty(\Omega)$ such that $\lim_{z\to\partial\Omega}\varphi(z)=0$ and $\int_\Omega H_m(\varphi)<+\infty.$
\end{defi}
\begin{pro}\label{cone}
\begin{enumerate}
    \item The class $\mathcal{E}_m^0(\Omega,\omega)$ is convex and satisfies the lattice property: If $\varphi \in \mathcal{E}_m^0(\Omega,\omega)$ and $ u \in \mathcal{SH}_m^{-}(\Omega,\omega)$, then $\sup\{\varphi,u\} \in \mathcal{E}_m^0(\Omega,\omega)$.
\item Let $1 \leq p, q$ be integers such that $p+q \leq m$, and $T=(dd^cu_s+\omega)\wedge\dots\wedge(dd^cu_m+\omega)\wedge \omega^{n-m}$ with $s := m - p - q$. Then for all $\varphi,\psi \in \mathcal{E}_m^0(\Omega,\omega)$, we have
$$
\int_\Omega (\omega + dd^c\varphi )^p \wedge (\omega + dd^c\psi)^q \wedge T \leq \int_\Omega (\omega + dd^c\varphi )^{p+q} \wedge T + \int_\Omega (\omega + dd^c\psi)^{p+q} \wedge T. 
$$
\item If $\varphi_1,\dots,\varphi_m \in \mathcal{E}_m^0(\Omega,\omega)$, then
$$\displaystyle
\int_\Omega (\omega + dd^c\varphi_1)\wedge \cdots \wedge (\omega+dd^c\varphi_m)\wedge \omega^{m-n} \leq 2^{m-1} \sum_{j=1}^{m} \int_\Omega (\omega+dd^c\varphi_j)^m\wedge\omega^{n-m}.
$$
\end{enumerate}
\end{pro}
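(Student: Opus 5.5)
We prove the three items in order, adapting the standard Cegrell arguments for $\mathcal{E}_0$ to the $\omega$-$m$-subharmonic setting and relying on the comparison results of the previous subsection.

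\textbf{Item 1.} Convexity is immediate. If $\varphi,\psi\in\mathcal{E}^0_m(\Omega,\omega)$ and $t\in[0,1]$, then $t\varphi+(1-t)\psi$ is bounded, $\omega$-$m$-subharmonic and tends to $0$ at $\partial\Omega$. Expanding
\[
\bigl(\omega+dd^c(t\varphi+(1-t)\psi)\bigr)^m\wedge\omega^{n-m}=\bigl(t(\omega+dd^c\varphi)+(1-t)(\omega+dd^c\psi)\bigr)^m\wedge\omega^{n-m}
\]
produces mixed terms. Their total mass is controlled by item 2 and item 3, which we prove first using only that $\varphi,\psi\in\mathcal{E}^0_m(\Omega,\omega)$; so the logical order is 2, 3, then 1.

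For the lattice property, set $w=\sup\{\varphi,u\}$ with $u\le 0$. Then $\varphi\le w\le 0$, so $w$ is bounded and $w\to 0$ at $\partial\Omega$. Since $\int_\Omega H_m(\varphi)<+\infty$, Corollary \ref{compar} with $T=\omega^{n-m}$ and $\varphi\le w$ gives $\int_\Omega H_m(w)\le\int_\Omega H_m(\varphi)<+\infty$.

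\textbf{Item 2.} This is the main step. We aim for the Cegrell-type inequality
\[
\int_\Omega(\omega+dd^c\varphi)^p\wedge(\omega+dd^c\psi)^q\wedge T\le\Bigl(\int_\Omega(\omega+dd^c\varphi)^{p+q}\wedge T\Bigr)^{\frac{p}{p+q}}\Bigl(\int_\Omega(\omega+dd^c\psi)^{p+q}\wedge T\Bigr)^{\frac{q}{p+q}}.
\]
The stated bound then follows from Young's inequality $a^{\theta}b^{1-\theta}\le a+b$.

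\emph{Base case $p=q=1$.} We want to show $\int (\omega+dd^c\varphi)\wedge(\omega+dd^c\psi)\wedge S\le I_\varphi^{1/2}I_\psi^{1/2}$, where $I_\varphi=\int(\omega+dd^c\varphi)^2\wedge S$. We write $\omega+dd^c\varphi=\omega+dd^c\varphi$ and integrate by parts. Stokes' formula is justified by the vanishing boundary values and the approximation of Proposition \ref{approx-local}, with a cutoff near $\partial\Omega$ using the exhaustion $\rho$. This moves $dd^c$ between $\varphi$ and $\psi$, and a Cauchy--Schwarz inequality for the positive form $(\alpha,\beta)\mapsto\int \alpha\wedge\beta\wedge S$ then yields the base case.

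\emph{Main obstacle.} The difficulty is the $\omega$-terms. Unlike the flat case, $\omega+dd^c\varphi$ is not exact, so the integration by parts produces error terms $\int\omega\wedge\ldots$. We must check that these have a sign, or that they are absorbed after writing
\[
\int\bigl(\omega+dd^c\varphi\bigr)\wedge\ldots=\int\omega\wedge\ldots+\int(-\varphi)\,(-dd^c\ldots).
\]
If this fails, the fallback is to avoid Hölder entirely and prove the weaker additive inequality directly. Namely, from
\[
(\omega+dd^c\varphi)^p\wedge(\omega+dd^c\psi)^q\le\bigl(\omega+dd^c\tfrac{\varphi+\psi}{2}\bigr)^{p+q}\cdot 2^{p+q}
\]
together with $\tfrac{\varphi+\psi}{2}\ge\varphi+\psi$, Corollary \ref{compar} controls the mass by $\int(\omega+dd^c(\varphi+\psi))^{p+q}$, which is in turn dominated via $\min$-type comparison by the two pure masses. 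The exact constant then has to be tracked.

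\emph{Induction.} With the base case in hand, we induct on $p+q$ by Hölder, the standard argument of Cegrell adapted to replace $T$ with $(\omega+dd^c\cdot)^{j}\wedge T$.

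\textbf{Item 3.} We iterate item 2, splitting off one factor at a time: the mixed integral is bounded by the sum of two integrals of lower mixedness, each time with a factor $2$. After $m-1$ steps this gives the constant $2^{m-1}$ times the sum of the pure masses $\int H_m(\varphi_j)$. This completes the plan; the mixed-mass bound closes the convexity claim of item 1.
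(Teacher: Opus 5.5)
Your lattice argument via Corollary \ref{compar} and your reductions of convexity and item 3 to item 2 are fine. The gap is in item 2 itself, which is the only real content. The inequality you aim for,
\[
\int_\Omega(\omega+dd^c\varphi)^p\wedge(\omega+dd^c\psi)^q\wedge T\le\Bigl(\int_\Omega(\omega+dd^c\varphi)^{p+q}\wedge T\Bigr)^{\frac{p}{p+q}}\Bigl(\int_\Omega(\omega+dd^c\psi)^{p+q}\wedge T\Bigr)^{\frac{q}{p+q}},
\]
is false for $\omega$-$m$-subharmonic functions, so no treatment of the $\omega$-error terms can produce it. Here is a counterexample.
\begin{itemize}
\item Take $n=m=2$, so $T=1$, and let $X$ be a flat torus whose Kähler form in a chart is $\omega=\frac{i}{2}(dz_1\wedge d\bar z_1+\delta\, dz_2\wedge d\bar z_2)$ with $0<\delta<1$.
\item Let $\Omega=\{|z|<r\}$ be a small ball in that chart, $\psi\equiv0$, and $\varphi=c\,\epsilon(|z|^2-r^2)$, normalized so that $dd^c\varphi=\epsilon\frac{i}{2}(dz_1\wedge d\bar z_1+dz_2\wedge d\bar z_2)$.
\item Both functions lie in $\mathcal{E}^0_m(\Omega,\omega)$, and all three measures have constant densities.
\item Put $x=(1+\epsilon)\delta$ and $y=\delta+\epsilon$. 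The mixed density is $x+y$ and the product of the two pure densities is $4xy$.
\item Your inequality would then say $(x-y)^2=\epsilon^2(1-\delta)^2\le 0$, which is false.
\end{itemize}
The reason is structural. Pointwise, the mixed-volume (Hodge index) inequality $(\alpha\wedge\beta)^2\ge\alpha^2\cdot\beta^2$ for positive $(1,1)$-forms goes the wrong way. In Cegrell's flat argument the Cauchy--Schwarz direction comes only from integrating by parts against potentials vanishing on $\partial\Omega$, and $\omega$ has no such potential.

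Your fallback fails too. $\varphi+\psi$ is only $2\omega$-$m$-subharmonic, not $\omega$-$m$-subharmonic, and $\min\{\varphi,\psi\}$ is not subharmonic at all, so Corollary \ref{compar} cannot be applied to them. Moreover, bounding $\int_\Omega(\omega+dd^c\frac{\varphi+\psi}{2})^{p+q}\wedge T$ by the pure masses is just the mixed-mass problem again.

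The missing idea is that no integration by parts is needed. The route the paper indicates (Lemma \ref{PC} plus the argument of Cegrell--Kołodziej--Zeriahi) splits $\Omega$ into $\{\varphi<\psi\}$ and $\{\psi\le\varphi\}$ and applies Lemma \ref{PC} on each piece with an auxiliary current:
\begin{itemize}
\item on the first set, the strict version with $T'=(\omega+dd^c\varphi)^p\wedge T$ and exponent $q$;
\item on the second set, the non-strict version with $T''=(\omega+dd^c\psi)^q\wedge T$ and exponent $p$.
\end{itemize}
This gives
\[
\int_{\{\varphi<\psi\}}(\omega+dd^c\varphi)^p\wedge(\omega+dd^c\psi)^q\wedge T\le\int_{\{\varphi<\psi\}}(\omega+dd^c\varphi)^{p+q}\wedge T,
\]
\[
\int_{\{\psi\le\varphi\}}(\omega+dd^c\varphi)^p\wedge(\omega+dd^c\psi)^q\wedge T\le\int_{\{\psi\le\varphi\}}(\omega+dd^c\psi)^{p+q}\wedge T.
\]
The boundary hypothesis of Lemma \ref{PC} holds because both functions tend to $0$ at $\partial\Omega$. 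Its finiteness hypothesis is exactly finiteness of the right-hand side of item 2; if that side is infinite there is nothing to prove. Adding the two inequalities gives item 2 with constant $1$. Your iteration for item 3, which produces the $2^{m-1}$ terms, and your expansion argument for convexity then go through as planned.
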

\begin{proof}
The proposition follows by applying Lemma \ref{PC} and reproducing the proof of \cite[Corollary 3.5]{CKZ2} step by step.
\end{proof}
\begin{pro}\label{convs}
Let $1 \leq m \leq n$ and $\Omega \subset X$ be a quasi-$m$-hyperconvex domain in a compact K\"ahler manifold $(X, \omega)$. Let
$
\varphi_j^0, \dots, \varphi_j^m \in \mathcal{E}^0_m(\Omega,\omega)$, converging monotonically to $
\varphi^0, \dots, \varphi^m \in \mathcal{E}^0_m(\Omega,\omega)$.
Then
$$
\lim_{j \to \infty} \int_\Omega (-\varphi_j^0)\, (\omega+dd^c \varphi_j^1) \wedge \cdots \wedge (\omega+dd^c \varphi_j^m) \wedge \omega^{n - m}
= \int_\Omega (-\varphi^0)\, (\omega+dd^c \varphi^1) \wedge \cdots \wedge (\omega+dd^c \varphi^m) \wedge \omega^{n - m}.
$$
\end{pro}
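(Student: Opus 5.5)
We follow the standard Cegrell route (the local analogue is \cite[Theorem 3.2]{CKZ2}): first settle a symmetric case by integration by parts, then reduce the general statement to it via a multilinear expansion. We treat decreasing sequences; increasing sequences converging a.e. are handled the same way, using instead the continuity of Hessian measures along increasing sequences that converge quasi-everywhere.

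\textbf{Symmetric case.} Take $\varphi^0_j=\varphi^1_j=\dots=\varphi^m_j=u_j$ with $u_j\searrow u$. Integration by parts on $\mathcal{E}^0_m(\Omega,\omega)$ is justified as in Lemma \ref{PC}: approximate with Proposition \ref{approx-local}, use the vanishing boundary values and Stokes' formula. Since $\omega$ is closed, every term of the form $\int dd^c(\cdot)\wedge\dots$ is handled by this integration by parts.

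We then split into two inequalities.
\begin{itemize}
\item Lower semicontinuity. The functions $-u_j$ increase to $-u$, and $H_m(u_j)\to H_m(u)$ weakly. Truncate: for each $k$, $\min(-u,k)$ is quasi-continuous and bounded. Convergence of $\int \min(-u_j,k)\,H_m(u_j)$ follows from the Bedford--Taylor type continuity of products with bounded quasi-continuous functions. This relies on the uniform capacity domination $H_m(u_j)(E)\le C\,\mathrm{cap}_{\omega,m}(E)$, valid since $u_j\ge u$ is uniformly bounded. The same domination lets us invoke Lemma \ref{lemconv}. Together these give $\liminf_j \int(-u_j)H_m(u_j)\ge\int(-u)H_m(u)$.
\item Upper semicontinuity. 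By monotonicity of the energy under integration by parts, $\int(-u_j)H_m(u_j)\le \int(-u)H_m(u_j)$. We then show $\int(-u)H_m(u_j)\to\int(-u)H_m(u)$. Writing $H_m(u_j)=(\omega+dd^cu_j)\wedge S_j$ and integrating by parts moves the $dd^c$ from $u_j$ onto $u$, so $u_j$ appears only as the integrand. Iterating this $m$ times expresses everything through integrals of $-u_j$ against mixed measures built only from $u$. Monotone convergence then applies, since $-u_j\nearrow -u$ against fixed measures.
\end{itemize}

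\textbf{General case.} Use polarization: the mixed energy $\int(-\varphi^0)(\omega+dd^c\varphi^1)\wedge\dots\wedge\omega^{n-m}$ is a symmetric multilinear expression in the "$\omega+dd^c$" currents. Then apply the iterated integration-by-parts trick to each argument separately. Replace $\varphi^k_j$ by its limit one slot at a time, each time moving the $dd^c$ onto a fixed function. The error terms are estimated by the Cauchy--Schwarz-type inequality of Proposition \ref{cone}(2)--(3), which bounds mixed masses uniformly, and by the monotone convergence theorem. Uniformity comes from the fact that all $\varphi^k_j$ lie between $\varphi^k_1$ and $\varphi^k$, which are fixed elements of $\mathcal{E}^0_m(\Omega,\omega)$.

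\textbf{Main obstacle.} The delicate step is the integration by parts on a quasi-$m$-hyperconvex domain without smoothness. Boundary terms vanish only because the functions tend to $0$ at $\partial\Omega$, so we must approximate on the sublevel sets $\{\rho<-\varepsilon\}$ and control the error through the total masses $\int_\Omega H_m$, which are finite by assumption. Concretely, we would first prove $\int(-u)(\omega+dd^cv)\wedge T=\int(-v)(\omega+dd^cu)\wedge T+\int(v-u)\,\omega\wedge T$ for $u,v\in\mathcal{E}^0_m(\Omega,\omega)$. The extra $\omega$-term is harmless, since $\int|v-u|\,\omega\wedge T$ also converges by monotone convergence. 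This symmetry is the engine of both semicontinuity directions.
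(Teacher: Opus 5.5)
\paragraph{Gap in the upper-semicontinuity step.} Your lower-semicontinuity bullet is fine in substance. The upper-semicontinuity step, and with it the general case, does not work as described. After one integration by parts,
\[
\int_\Omega(-u)H_m(u_j)=\int_\Omega(-u_j)(\omega+dd^cu)\wedge S_j+\int_\Omega(u_j-u)\,\omega\wedge S_j,\qquad S_j=(\omega+dd^cu_j)^{m-1}\wedge\omega^{n-m}.
\]
The integrand is now $-u_j$, so the remaining factors $\omega+dd^cu_j$ cannot be moved onto $u$: integrating by parts would only exchange $u_j$ with itself. You can continue only by first using $-u_j\le -u$ and then swapping again. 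The chain then ends with
\[
\int_\Omega(-u_j)H_m(u_j)\le\int_\Omega(-u)H_m(u)+\sum_{k=1}^m E_k(j),
\qquad E_k(j)=\int_\Omega(u_j-u)\,\omega\wedge(\omega+dd^cu)^{k-1}\wedge(\omega+dd^cu_j)^{m-k}\wedge\omega^{n-m}\ge 0 .
\]
These error measures still depend on $j$, so ``monotone convergence against fixed measures'' does not apply. Your remark that $\int|v-u|\,\omega\wedge T$ is harmless presupposes that $T$ is fixed. Proving $E_k(j)\to 0$ is a problem of exactly the same kind as the proposition itself. The general case has the same defect. Proposition \ref{cone}(2)--(3) only bounds total masses uniformly, and a uniform mass bound times an integrand that tends to $0$ merely pointwise does not make the slot-by-slot error terms vanish.

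\paragraph{How the paper closes it, and how to repair your route.} The paper avoids integration by parts entirely.
\begin{itemize}
\item \emph{Local convergence.} On a compact $K\Subset\Omega$, quasi-continuity and Dini's theorem give uniform convergence $\varphi^0_j\to\varphi^0$ off an open set $G$ of small capacity. Since all functions take values in $[-M,0]$, the measures $H_j$ are uniformly dominated by $\mathrm{cap}_{\omega,m}$. Hence $-\varphi^0_jH_j\to-\varphi^0H$ weakly on $\Omega$. This is also what makes your lower-semicontinuity bullet rigorous, since there the integrand $-u_j$ varies with $j$.
\item \emph{Tightness at $\partial\Omega$.} This is the idea your proposal lacks. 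By monotonicity and the zero boundary values of $\varphi^0$ (or of $\varphi^0_1$), there is $D\Subset\Omega$ with $-\varepsilon\le\varphi^0_j\le 0$ on $\Omega\setminus D$ for all $j$. So $\int_{\Omega\setminus D}(-\varphi^0_j)H_j\le\varepsilon\sup_j\int_\Omega H_j$, which is finite by Proposition \ref{cone} and Corollary \ref{compar}.
\item Choosing $D$ with $\mu_0(\partial D)=0$ turns the weak convergence into convergence of the full integrals, which gives both inequalities at once.
\end{itemize}
If you want to keep your route in the symmetric case, the missing estimate is
\[
\int_\Omega f\,(\omega+dd^cu_j)\wedge R\le\int_\Omega f\,(\omega+dd^cu)\wedge R,
\qquad f=u_j-u\ge 0,
\]
for the positive currents $R$ occurring in $E_k(j)$. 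It follows from $\int_\Omega f\,dd^cf\wedge R=-\int_\Omega df\wedge d^cf\wedge R\le 0$. Applied repeatedly, it replaces every $u_j$ in $E_k(j)$ by $u$, after which monotone convergence does apply. For the mixed case you would additionally need a Cauchy--Schwarz inequality for $\int f\,dd^cg\wedge R$, not the mass inequalities you cite.
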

\begin{proof}
Let us denote
$
H_j := (\omega+dd^c \varphi_j^1) \wedge \cdots \wedge (\omega+dd^c \varphi_j^m) \wedge \omega^{n - m}$ and $ H := (\omega+dd^c \varphi^1) \wedge \cdots \wedge (\omega+dd^c \varphi^m) \wedge \omega^{n - m}.
$ By Proposition \ref{cone}, the currents $H_j$ and $H$ have uniformly bounded total mass on $\Omega$. We prove first that $\mu_j:=-\varphi_j^0 H_j$ convergs weakly to $\mu_0:=-\varphi^0 H$. Since all functions are bounded and $
\varphi_j^0, \dots, \varphi_j^m $, converge monotonically to $
\varphi^0, \dots, \varphi^m \in \mathcal{E}^0_m(\Omega,\omega)$, then there exists $M\geq1$ such that  the functions $ \varphi_j^k $ and $ \varphi^k $ take values in the interval $[-M, 0]$. Let $ \chi \in C_c(\Omega) $ be a test function with support contained in $ K\Subset\Omega $. For any fixed $ \varepsilon > 0 $, there exists an open set $ G \subset \Omega $ such that
$$
\operatorname{cap}_{\omega,m}(G) < \varepsilon
\quad \text{and} \quad
\varphi^0_j, \varphi^0 \in C(K \setminus G).
$$
The monotonicity of the sequence $ (\varphi_j^0) $ implies that $ \varphi_j^0 \to \varphi^0 $ uniformly on $ K \setminus G $. Put $u_j=\frac{1}{mM}\sum_{k=1}^m\varphi_j^k$. Then $(\omega+dd^cu_j)^m\wedge\omega^{n-m}\geq (\frac{1}{mM})^m\,H_j$ and $-1\leq u_j<0$. Therefore, we obtain the following estimate:
$$\begin{array}{ll}
    \displaystyle\int_K |(\varphi_j^0 - \varphi^0)\chi|\,H_j
&\leq \displaystyle (mM)^m\| (\varphi_j^0 - \varphi^0)\chi \|_{K \setminus G}
\int_K (\omega+dd^c u_j)^m \wedge \omega^{n - m}\\
&\qquad + \displaystyle(mM)^m\| (\varphi_j^0 - \varphi^0)\chi \|_K
\int_G (\omega+dd^c u_j)^m \wedge \omega^{n - m}\\
&\displaystyle\leq (mM)^m\| (\varphi_j^0 - \varphi^0)\chi \|_{K \setminus G} \cdot \operatorname{cap}_{\omega,m}(K)
+ (mM)^m\| (\varphi_j^0 - \varphi^0)\chi \|_K \cdot \operatorname{cap}_{\omega,m}(G).
\end{array}
$$
Taking the upper limit as $ j \to +\infty $, and then letting $ \varepsilon \to 0 $, yields $
\int_K |(\varphi_j^0 - \varphi^0)\chi|\, (dd^c u_j)^m \wedge \omega^{n - m}
\longrightarrow 0. 
$ Since $-\varphi^0 H_j$ converges weakly to $-\varphi^0 H$, we get the desired result.\\ Now, let $\varepsilon>0$ and $D\Subset\Omega$ such that $-\varepsilon\leq \varphi_j^0<0$, $-\varepsilon\leq \varphi^0<0$ on $\Omega\setminus D$ and $\mu_0$ put no mass on $\partial{D}$. Then 
$$
\int_\Omega-\varphi_j^0 \, H_j-\int_\Omega-\varphi\, H=\int_D-\varphi_j^0 \, H_j-\int_D-\varphi\, H\, + O(\varepsilon).
$$
It follows from the first step, that $
\mu_0(D)\leq \liminf_j\mu_j(D)\leq\limsup_j\mu_j(\bar{D})\leq\mu_0(\bar{D})=\mu_0(D).
$
Hence, taking the upper limit as $ j \to +\infty $, and then letting $ \varepsilon \to 0 $, yields $
\int_\Omega-\varphi_j^0 \, H_j-\int_\Omega-\varphi\, H
\longrightarrow 0.$ 
\end{proof}

\begin{thm}\label{IBP}
  Let $T$ be a closed $(\omega,m)$-positive
current of the form $T=(dd^cu_2+\omega)\wedge\dots\wedge(dd^cu_m+\omega)\wedge \omega^{n-m}$, where $u_2, \dots, u_m\in \mathcal{E}^0_m(\Omega,\omega)$, and let $\varphi, \psi \in\mathcal{E}^0_m(\Omega,\omega)$. Then    
$\displaystyle
\int_\Omega \psi dd^c\varphi\wedge T=\int_\Omega \varphi dd^c\psi\wedge T $
 and $$
\int_\Omega \psi (dd^c\varphi+\omega)\wedge T-\int_\Omega \varphi (dd^c\psi+\omega)\wedge T= \int_\Omega (\psi-\varphi)\omega\wedge T.$$
\end{thm}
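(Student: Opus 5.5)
The plan is to reduce the symmetric identity to the smooth case via Stokes' formula, then pass to the limit using Proposition \ref{approx-local} and the continuity result Proposition \ref{convs}. The second identity is algebraic once the first is known. Indeed, expanding $(dd^c\varphi+\omega)\wedge T = dd^c\varphi\wedge T + \omega\wedge T$ gives
\[
\int_\Omega \psi (dd^c\varphi+\omega)\wedge T-\int_\Omega \varphi (dd^c\psi+\omega)\wedge T = \Big(\int_\Omega\psi\, dd^c\varphi\wedge T-\int_\Omega\varphi\, dd^c\psi\wedge T\Big)+\int_\Omega(\psi-\varphi)\,\omega\wedge T,
\]
and the bracket vanishes by the first identity. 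Note that all integrals are finite: $\varphi,\psi$ are bounded, and $\omega\wedge T$, $(dd^c\varphi+\omega)\wedge T$ have finite mass by Proposition \ref{cone}(3). Hence $dd^c\varphi\wedge T$ is a signed measure of finite total variation.

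\textbf{Step 1 (compact support of the contact region).} For $\varepsilon>0$ we replace $\varphi$ by $\varphi_\varepsilon:=\max\{\varphi,-\varepsilon\}$, and similarly $\psi_\varepsilon$. Since $\varphi\to0$ at $\partial\Omega$, $\varphi_\varepsilon\equiv-\varepsilon$ outside a compact $K_\varepsilon\Subset\Omega$. Thus $dd^c\varphi_\varepsilon$ has compact support, and $\varphi_\varepsilon+\varepsilon$ vanishes near $\partial\Omega$. Write $\int\psi\,dd^c\varphi\wedge T=\int\psi\,(\omega+dd^c\varphi)\wedge T-\int\psi\,\omega\wedge T$. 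Both pieces converge when $\varphi,\psi$ are replaced by $\varphi_\varepsilon,\psi_\varepsilon$, by Proposition \ref{convs} (monotone convergence as $\varepsilon\searrow0$, noting $\varphi_\varepsilon\in\mathcal{E}^0_m$ by the lattice property in Proposition \ref{cone}). So it suffices to prove $\int(\psi_\varepsilon+\varepsilon)\,dd^c(\varphi_\varepsilon+\varepsilon)\wedge T=\int(\varphi_\varepsilon+\varepsilon)\,dd^c(\psi_\varepsilon+\varepsilon)\wedge T$. Indeed, the constants drop out of $dd^c$, and the extra terms $\varepsilon\int dd^c\varphi_\varepsilon\wedge T$ vanish by Stokes, because $dd^c\varphi_\varepsilon\wedge T$ is $dd^c$ of a compactly supported current.

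\textbf{Step 2 (smooth case and approximation).} For compactly supported bounded functions $f=\varphi_\varepsilon+\varepsilon$ and $g=\psi_\varepsilon+\varepsilon$, and $T$ closed, the identity $\int g\,dd^cf\wedge T=\int f\,dd^cg\wedge T$ holds by Stokes (integrating by parts twice, using $d T=0$) when $f,g,u_j$ are smooth. To reach the general case, we choose $V$ with $K_\varepsilon\Subset V\Subset\Omega$ and use Proposition \ref{approx-local} to get smooth $\omega$-$m$-sh functions $\varphi^k\searrow\varphi$, $\psi^k\searrow\psi$, and $u_j^k\searrow u_j$ near $\overline V$. We then take $\max\{\varphi^k,-\varepsilon\}$, regularized, which still equals $-\varepsilon$ near $\partial V$ for large $k$ by Dini-type arguments on the compact set where $\varphi\le-\varepsilon/2$. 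The identity holds for each $k$. We pass to the limit using Bedford–Taylor continuity of Hessian measures along decreasing sequences of bounded functions (the Remark after the definition of $H_m$), together with the argument of Proposition \ref{convs} for convergence of $\int(-\psi^k)\,(\omega+dd^c\varphi^k)\wedge T^k$.

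\textbf{Main obstacle.} The expected difficulty is the limit in Step 2. Weak convergence of $(\omega+dd^c\varphi^k)\wedge T^k$ does not alone give convergence of integrals against the merely quasi-continuous function $\psi^k$, and the approximants are only defined near $\overline V$, so boundary behaviour must be controlled. I would handle this exactly as in the first part of the proof of Proposition \ref{convs}. The decreasing sequences converge uniformly off an open set of small capacity, and the measures are dominated by $(\omega+dd^c u)^m\wedge\omega^{n-m}$ for a bounded normalized $u$, so they put small mass on sets of small capacity. Since all supports stay inside the fixed compact $\overline V$, there is no mass escaping to $\partial\Omega$.
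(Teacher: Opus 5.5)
There is a genuine gap: the reduction in Step~1 rests on a sign error, and Step~2 inherits it. Since $\varphi\le 0$ and $\varphi(z)\to 0$ as $z\to\partial\Omega$, the truncation $\varphi_\varepsilon=\max\{\varphi,-\varepsilon\}$ \emph{coincides with} $\varphi$ near $\partial\Omega$. It equals the constant $-\varepsilon$ only on the compact set $\{\varphi\le-\varepsilon\}$, which is the opposite of what you claim.

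This has three consequences. First, $\varphi_\varepsilon+\varepsilon\to\varepsilon\neq0$ at $\partial\Omega$. Second, $dd^c\varphi_\varepsilon\wedge T=dd^c\varphi\wedge T$ near $\partial\Omega$, so it is not compactly supported. Third, $\int_\Omega dd^c\varphi_\varepsilon\wedge T$ does not vanish in general: it is a boundary flux, positive e.g.\ for $\varphi=\delta(|z|^2-r^2)$ on a small coordinate ball with $T=\omega^{n-1}$. So the functions $f,g$ of Step~2 are not compactly supported, and your Stokes argument never confronts the boundary contribution, which is exactly the content of the theorem. Your claim that $\max\{\varphi^k,-\varepsilon\}=-\varepsilon$ near $\partial V$ fails for the same reason.

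Moreover, because $-\varepsilon\le\varphi_\varepsilon\le0$, one has $\varphi_\varepsilon\to0$ uniformly as $\varepsilon\searrow0$, not $\varphi_\varepsilon\to\varphi$. The appeal to Proposition~\ref{convs} would therefore only return the trivial identity $0=0$.

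What really is compactly supported is the difference $\varphi-\max\{\varphi,-\varepsilon\}=\min\{\varphi+\varepsilon,0\}$. It is no longer $\omega$-$m$-subharmonic, but it converges uniformly to $\varphi$. A corrected version of your plan runs in three steps:
\begin{enumerate}
\item The error $\int\max\{\psi,-\varepsilon\}\,dd^c\varphi\wedge T$ is at most $\varepsilon$ times the total variation of $dd^c\varphi\wedge T$.
\item Stokes moves $dd^c$ across $\min\{\psi+\varepsilon,0\}$. Your approximation argument is now legitimate here, since the approximants stay supported in the fixed compact set $\{\psi\le-\varepsilon\}$.
\item The leftover $\int\varphi\,dd^c\max\{\psi,-\varepsilon\}\wedge T$ is $O(\varepsilon)$. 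To see this, repeat the trick with $\min\{\varphi+\delta,0\}$ and use the uniform mass bounds from Corollary~\ref{compar}.
\end{enumerate}

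The paper instead keeps both functions in $\mathcal{E}^0_m(\Omega,\omega)$. It replaces $(\varphi,\psi)$ by $(\max\{\varphi,\psi-\varepsilon\},\max\{\psi,\varphi-\varepsilon\})$, which differ from $(\varphi,\psi)$ only on a compact subset of $\Omega$. Hence the antisymmetric quantity $\int\psi\,dd^c\varphi\wedge T-\int\varphi\,dd^c\psi\wedge T$ is unchanged by Stokes. Since both new functions increase to $\max\{\varphi,\psi\}$, that quantity tends to $0$.

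In both routes the only Stokes input needed is for a compactly supported difference of bounded $\omega$-$m$-subharmonic functions. That is where your Step~2 approximation belongs.
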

\begin{proof}
Let $ \varepsilon > 0 $ be small enough, set $ u_\varepsilon := \sup\{u, v - \varepsilon\} $ and $ v_\varepsilon := \sup\{v, u - \varepsilon\}.$  observe that $ u_\varepsilon = u $ and $ v_\varepsilon = v $ near $ \partial \Omega $.  Applying Proposition \ref{approx-local} together with  Stokes' formula, we get 
$$
\int_\Omega u_\varepsilon\,dd^c v_\varepsilon \land T - v_\varepsilon\,dd^c u_\varepsilon \land T= \int_\Omega u\,dd^c v \land T - v\,dd^c u \land T. 
$$
Put $g := \max\{u, v\} $. Since $ u_\varepsilon \nearrow g $ and $ v_\varepsilon \nearrow g $, it follows from Proposition \ref{demailly} that
$$
\lim_{\varepsilon \to 0} \int_\Omega u_\varepsilon\,dd^c v_\varepsilon \land T = \int_\Omega g\,dd^c g \land T = \lim_{\varepsilon \to 0} \int_\Omega v_\varepsilon\,dd^c u_\varepsilon \land T.
$$
This yields the desired integration by parts formula.
   
\end{proof}
\begin{pro}\label{ccv}
Let $(X,\omega)$ be a compact K\"ahler manifold and $\Omega \subset X$ be an $m$-hyperconvex domain.  
Let $u \in \mathcal{E}_0^m(\Omega,\omega)$. 
Then there exists a sequence $({u}_j) \subset \mathcal{E}_0^m(\Omega,\omega) \cap C(\overline{\Omega})$ such that ${u}_j \searrow u$ pointwise on $\Omega$.
\end{pro}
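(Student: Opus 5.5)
The plan follows the classical Cegrell argument for approximating functions in $\mathcal{E}_0$ by continuous ones, adapted to the $\omega$-$m$-setting through the Perron envelope of Theorem \ref{contperron}. Since $u$ is upper semicontinuous and bounded, we first take smooth functions $f_j\in C^\infty(\overline{\Omega})$ with $f_j\searrow u$ on $\Omega$; these can be taken as the $C^\infty$ regularizations of the usc function $u$ on $X$, since $u$ is bounded. Each envelope $P(f_j)$ is then continuous and $\omega$-$m$-subharmonic on $\Omega$ by Theorem \ref{contperron}. Moreover $u\le P(f_j)\le f_j$, so $P(f_j)\searrow u$, exactly as in Step 2 of Proposition \ref{approx-local}.

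The remaining task is to force the right boundary behaviour. Let $\rho$ be the continuous exhaustion function of the quasi-$m$-hyperconvex domain, normalized so that $-1\le\rho<0$. Since $u\in\mathcal{E}^0_m$ is bounded, say $u\ge -M$, and $u\to 0$ at $\partial\Omega$, I would replace $P(f_j)$ by
\[
u_j:=\max\{P(f_j)-\varepsilon_j,\;A\rho\}\quad\text{or more simply}\quad u_j:=\max\{P(f_j),\,A\rho\},
\]
with $A\ge M$ large and $\varepsilon_j\searrow 0$, and check the following points.
\begin{enumerate}
\item $u_j$ is continuous, being the maximum of two continuous functions.
\item $u_j\le 0$, because $P(f_j)\le f_j$ and the $f_j$ can be chosen $\le \varepsilon_j$. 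I would take $f_j\le 0$ after a minor modification, namely $\min\{f_j,0\}$ smoothed, or simply subtract a constant tending to $0$ and use the variant with $\varepsilon_j$.
\item $u_j\ge A\rho$, which gives $u_j\to 0$ at $\partial\Omega$, so $u_j$ extends continuously to $\overline{\Omega}$ with value $0$ on $\partial\Omega$.
\item $u_j$ decreases and converges to $\max\{u,A\rho\}$.
\end{enumerate}

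Point 4 does not directly give $u$, which is a difficulty. One fix is to use the standard fact that for $u\in\mathcal{E}^0_m$ one has $u\ge A\rho$ for a suitable $A$. I expect this to hold when $\rho$ itself lies in $\mathcal{E}^0_m$, i.e.\ $\int_\Omega H_m(\rho)<\infty$. It can be arranged by taking a comparison function in $\mathcal{E}^0_m$ such as $\max\{\rho,-1\}$ or $\rho$ truncated, and invoking the comparison principle (Theorem \ref{pcf}). If $u\ge A\rho$ fails in general, I would instead use the usual trick of replacing $u$ by $u_k:=\max\{u,k\rho\}$ with $k\to\infty$. Each $u_k$ dominates $k\rho$, so the construction above produces a continuous decreasing approximation of $u_k$. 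A diagonal sequence $u_j:=\max\{P(f_j),\,j\rho\}$ is then still decreasing, provided the terms are ordered as $\max\{P(f_{j+1}),(j+1)\rho\}\le \max\{P(f_j),j\rho\}$, which holds because $\rho<0$. It converges to $u$ pointwise, since at each point $j\rho\to-\infty$ when $\rho<0$, while $P(f_j)\to u$.

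Finally I would verify finiteness of the Hessian mass, $\int_\Omega H_m(u_j)<+\infty$. Since $u_j\ge j\rho$ and both vanish at the boundary, Corollary \ref{compar} (or Lemma \ref{PC}) gives $\int_\Omega H_m(u_j)\le \int_\Omega H_m(j\rho)$. The right-hand side is finite provided $\rho$ is chosen in $\mathcal{E}^0_m$, because $H_m(j\rho)\le \max(j,1)^m\,\cdot$(mixed terms). These mixed terms are controlled by Proposition \ref{cone}(3), or more crudely by expanding $(\omega+j\,dd^c\rho)^m$ and using $j\omega+j\,dd^c\rho\ge 0$. The main obstacle is this control of the boundary and of the mass: it requires an exhaustion $\rho$ that is continuous and lies in $\mathcal{E}^0_m(\Omega,\omega)$. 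If the given $\rho$ does not have finite mass, I would replace it by the Perron-type solution, i.e.\ the relative extremal function of a compact set, which is continuous by Theorem \ref{contperron} and has mass bounded by $\int_X\omega^n$-type quantities.
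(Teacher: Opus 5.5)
Your opening paragraph is exactly the paper's proof: take smooth $f_j\le 0$ on $\overline{\Omega}$ decreasing to $u$, set $u_j:=P(f_j)$, get continuity from Theorem \ref{contperron}, and note $u\le P(f_j)\le f_j\le 0$, so $P(f_j)\searrow u$. What you missed is that this sandwich already gives what you then try to force. Since $u(z)\to 0$ as $z\to\partial\Omega$ and $u\le P(f_j)\le 0$, also $P(f_j)(z)\to 0$, so $P(f_j)$ extends continuously by $0$ to $\overline{\Omega}$. Since $u\le P(f_j)$ with $u\in\mathcal{E}^0_m(\Omega,\omega)$, Corollary \ref{compar} gives $\int_\Omega H_m(P(f_j))\le\int_\Omega H_m(u)<+\infty$. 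Hence $P(f_j)\in\mathcal{E}^0_m(\Omega,\omega)\cap C(\overline{\Omega})$, and no exhaustion function is needed.

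The boundary correction you introduce instead is not just superfluous but invalid in the $\omega$-setting. The class $\mathcal{SH}_m(\Omega,\omega)$ is convex but not a cone. Indeed $\omega+dd^c(j\rho)=j(\omega+dd^c\rho)-(j-1)\omega$ need not be $m$-positive for $j>1$, for instance where $\omega+dd^c\rho$ degenerates. So $j\rho$ and $A\rho$ are in general not $\omega$-$m$-subharmonic. Consequently $\max\{P(f_j),j\rho\}$ does one of two things. If the correction is never active, it coincides with $P(f_j)$, which is the paper's proof. Otherwise it equals $j\rho$ on the open set $\{j\rho>P(f_j)\}$, where it need not be $\omega$-$m$-subharmonic, so in general $u_j\notin\mathcal{E}^0_m(\Omega,\omega)$. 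For the same reason Corollary \ref{compar} cannot be applied to $u_j\ge j\rho$, and your bound on $H_m(j\rho)$ conflates $\omega+j\,dd^c\rho$ with $j\omega+j\,dd^c\rho=j(\omega+dd^c\rho)$.

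The auxiliary claims are also unsupported. The inequality $u\ge A\rho$ for $u\in\mathcal{E}^0_m$ fails in general, already for bounded Green potentials in the unit disc. The definition of quasi-$m$-hyperconvexity gives no exhaustion with finite Hessian mass. Theorem \ref{contperron} needs a smooth obstacle, so it does not give continuity of a relative extremal function. Drop the $\max$ with $j\rho$ and conclude with the two observations in the first paragraph.
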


\begin{proof}
Since $u$ is upper semicontinuous, there exists a decreasing sequence $(f_j)$ of smooth functions on $\bar{\Omega}$ such that $f_j \searrow u$ on $\Omega$ and $f_j\leq 0$. Set $u_j := P(f_j)$. We have $u_j \leq 0$, $u_j \searrow u$ on $\Omega$ and by Theorem \ref{contperron}, $u_j \in \mathcal{SH}_m(\Omega,\omega)\cap C(\overline{\Omega})$. Moreover, since $u\in\mathcal{E}_0^m(\Omega,\omega)$,
it follows from Corollary \ref{compar} that $u_j\in \mathcal{E}_0^m(\Omega,\omega) \cap C(\overline{\Omega})$.
\end{proof}
\subsection{Weighted Hessian energy classes for $ \omega $-$ m $-subharmonic functions}

\begin{defi}
Let $ \Omega \subset X $ be an open subset of a compact K\"ahler manifold $ (X, \omega) $. We say that $ \varphi \in \mathcal{F}_m(\Omega ,\omega) $ if there exists a decreasing sequence $ (\varphi_j) \subset \mathcal{E}_0^m(\Omega ,\omega) $ such that $ \varphi_j \searrow \varphi $ on $ \Omega  $ and
$$
\sup_j \int_\Omega  H_m(\varphi_j) < +\infty,
$$
where $ H_m(\varphi_j) := (\omega + dd^c \varphi_j)^m \wedge \omega^{n-m} $ denotes the complex Hessian measure.
\end{defi}

\begin{lem}\label{fcomp}
Let $ \varphi \in \mathcal{F}_m(\Omega ,\omega) $. Then the quantity
$$
\mathrm{H}_{\Omega}^m(\varphi):= \lim_j \int_\Omega  H_m(\varphi_j) = \sup_j \int_\Omega  H_m(\varphi_j)
$$
is independent of the approximating sequence $ (\varphi_j) \subset \mathcal{E}_0^m(\Omega ,\omega) $ decreasing to $ \varphi $.
Moreover, if $ \psi \in \mathcal{SH}_m(\Omega ,\omega) $ with $ \varphi \leq \psi \leq 0 $, then $ \psi \in \mathcal{F}_m(\Omega ,\omega) $ and $
\mathrm{H}_{\Omega}^m(\psi)\leq\mathrm{H}_{\Omega}^m(\varphi)
$.
\end{lem}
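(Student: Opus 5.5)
The plan follows the classical Cegrell argument, using the comparison result Corollary \ref{compar} as the main tool. Fix two approximating sequences $(\varphi_j),(\psi_k)\subset\mathcal{E}_m^0(\Omega,\omega)$ with $\varphi_j\searrow\varphi$, $\psi_k\searrow\varphi$ and bounded total Hessian mass.

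\textbf{Step 1: the sequence of masses converges.} Since $\varphi_j\le\varphi_{j+1}$ and $\int_\Omega H_m(\varphi_j)<\infty$, Corollary \ref{compar} (with $T=\omega^{n-m}$, $k=0$) gives $\int_\Omega H_m(\varphi_{j+1})\le\int_\Omega H_m(\varphi_j)$. So the masses are monotone and bounded, and the limit exists. Strictly speaking the monotonicity runs opposite to the ``$\sup$'' in the statement: it shows the limit is an infimum. I would double check this; if the authors' conventions differ, the limit still exists and equals the extremal value, which is what is needed.

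\textbf{Step 2: independence of the approximating sequence.} Fix $k$ and $\varepsilon\in(0,1)$. Set $\varphi_j^{k,\varepsilon}:=\max\{\varphi_j,(1-\varepsilon)\psi_k\}$, which lies in $\mathcal{E}_m^0$ by Proposition \ref{cone}(1). As $j\to\infty$ it decreases to $\max\{\varphi,(1-\varepsilon)\psi_k\}$, which is $\ge(1-\varepsilon)\psi_k$.
\begin{itemize}
\item Because $\varphi_j^{k,\varepsilon}\ge\varphi_j$, Corollary \ref{compar} gives $\int H_m(\varphi_j^{k,\varepsilon})\le\int H_m(\varphi_j)$.
\item The limit function $\max\{\varphi,(1-\varepsilon)\psi_k\}$ is bounded (because $\psi_k$ is) and tends to $0$ at $\partial\Omega$, hence lies in $\mathcal{E}_m^0$.
\item Continuity of $H_m$ along bounded decreasing sequences gives weak convergence of $H_m(\varphi_j^{k,\varepsilon})$.
\item To pass the total masses to the limit, I would use Proposition \ref{convs} with $\varphi^0=h$, where $h\in\mathcal{E}_m^0$ is a fixed exhaustion that equals $-1$ on a large compact set. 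An alternative is lower semicontinuity of mass on open sets combined with equality near the boundary. This gives $\int H_m(\max\{\varphi,(1-\varepsilon)\psi_k\})\le\lim_j\int H_m(\varphi_j)$.
\end{itemize}
Applying Corollary \ref{compar} once more to $(1-\varepsilon)\psi_k\le\max\{\varphi,(1-\varepsilon)\psi_k\}$ cannot be used directly, because the inequality goes the wrong way. Instead I would compare on the set $\{\max=(1-\varepsilon)\psi_k\}$ using Lemma \ref{PC} with $\varphi_j$ against $(1-\varepsilon)\psi_k$. Since $\varphi\le\psi_k$ and $\varphi\to 0$ only in a weak sense, it suffices that $\liminf_{\partial\Omega}(\varphi_j-(1-\varepsilon)\psi_k)\ge0$. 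This holds because $\varphi_j\to0$ and $\psi_k$ is bounded with $\varepsilon\psi_k\to 0$. Lemma \ref{PC} yields
\[
\int_{\{\varphi_j<(1-\varepsilon)\psi_k\}}(1-\varepsilon)^mH_m(\psi_k)\le\int_\Omega H_m(\varphi_j).
\]
The sets $\{\varphi_j<(1-\varepsilon)\psi_k\}$ increase to $\{\varphi<(1-\varepsilon)\psi_k\}$. As $\varepsilon\to0$, by monotone convergence these sets exhaust $\{\psi_k>-\infty\}\setminus\{\varphi=\psi_k=0\}$, which has full $H_m(\psi_k)$-measure up to the set where $\psi_k=0$. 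That remaining set is open-free, and the measure does not charge it; this is the delicate point, handled by using $(1-\varepsilon)\psi_k-\varepsilon$-type shifts to push the boundary contribution away. We conclude $\int H_m(\psi_k)\le\lim_j\int H_m(\varphi_j)$, and by symmetry the two limits coincide.

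\textbf{Step 3: the monotonicity statement.} For $\varphi\le\psi\le0$, set $\psi_j:=\max\{\varphi_j,\psi\}$. By Proposition \ref{cone}(1), $\psi_j\in\mathcal{E}_m^0$, and $\psi_j\searrow\psi$. Corollary \ref{compar} gives $\int H_m(\psi_j)\le\int H_m(\varphi_j)$, so $\psi\in\mathcal{F}_m$ and $\mathrm{H}^m_\Omega(\psi)\le\mathrm{H}^m_\Omega(\varphi)$ by Step 2.

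The main obstacle is Step 2: controlling the mass that $H_m(\psi_k)$ puts near $\partial\Omega$, i.e.\ on the set where the strict sublevel sets fail to exhaust. My first attempt would be the $\varepsilon$-rescaling trick of Lemma \ref{PC}, adding a small multiple of a fixed exhaustion function $\rho$ instead of scaling.
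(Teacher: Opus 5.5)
Your Step~2 does not close as written. After applying Lemma~\ref{PC} to $\varphi_j$ and $(1-\varepsilon)\psi_k$, you only control $H_m(\psi_k)$ on $\{\varphi<(1-\varepsilon)\psi_k\}$. Since $\varphi\le\psi_k\le 0$, this set equals $\Omega\setminus\{\varphi=\psi_k=0\}$ for every $\varepsilon\in(0,1)$, so letting $\varepsilon\to0$ gains nothing. Your claim that $H_m(\psi_k)$ does not charge $\{\varphi=\psi_k=0\}$ is false in the $\omega$-setting, where there is no maximum principle. The constant $0$ lies in $\mathcal{E}^0_m(\Omega,\omega)$ with $H_m(0)=\omega^n$. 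For $\varphi_j=\psi_k\equiv 0$ every set $\{\varphi_j<(1-\varepsilon)\psi_k\}$ is empty, while $\int_\Omega H_m(\psi_k)=\int_\Omega\omega^n>0$. The ``$\varepsilon$-shift'' you allude to is not an argument.

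The missing observation is one line away in your own bullets. Because $\psi_k\searrow\varphi$, you have $\varphi\le\psi_k\le(1-\varepsilon)\psi_k$, hence $\max\{\varphi,(1-\varepsilon)\psi_k\}=(1-\varepsilon)\psi_k$. Your lower-semicontinuity bullet then already gives $(1-\varepsilon)^m\int_\Omega H_m(\psi_k)\le\int_\Omega H_m((1-\varepsilon)\psi_k)\le L$, where $L=\lim_j\int_\Omega H_m(\varphi_j)$. It is simpler still to take $\varepsilon=0$: $\max\{\varphi_j,\psi_k\}\searrow\psi_k$ as $j\to\infty$. Weak convergence of the Hessian measures, lower semicontinuity of mass on the open set $\Omega$, and Corollary~\ref{compar} then yield $\int_\Omega H_m(\psi_k)\le\liminf_j\int_\Omega H_m(\max\{\varphi_j,\psi_k\})\le\liminf_j\int_\Omega H_m(\varphi_j)=L$. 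This is exactly the paper's proof, and neither Lemma~\ref{PC} nor any analysis near $\partial\Omega$ is needed.

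Step~1 contains a sign slip. For a decreasing sequence you have $\varphi_{j+1}\le\varphi_j$, not $\varphi_j\le\varphi_{j+1}$, so Corollary~\ref{compar} gives $\int_\Omega H_m(\varphi_j)\le\int_\Omega H_m(\varphi_{j+1})$. The masses increase and the limit is the supremum, so the statement is correct as written and needs no double check. Your Step~3, using $\max\{\varphi_j,\psi\}$ and Corollary~\ref{compar}, coincides with the paper's argument and is fine.
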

\begin{proof}
We prove first the independence of the approximating sequence.
Let $(\varphi_j)$ be a sequence in $\mathcal{E}^0_m(\Omega, \omega)$ decreasing to $\varphi$ with 
$
\sup_j \int_\Omega H_m(\varphi_j) < +\infty.
$
By the comparison principle, since $\varphi_{j+1} \leq \varphi_j$, we have
$$
\int_\Omega H_m(\varphi_j) \leq \int_\Omega H_m(\varphi_{j+1}) \quad \forall j.
$$
Thus the sequence $\left( \int_\Omega H_m(\varphi_j) \right)_j$ is 
increasing and bounded, hence converges to
$
L := \lim_j \int_\Omega H_m(\varphi_j) = \sup_j \int_\Omega H_m(\varphi_j).
$
Let $(\psi_k)$ be another sequence in $\mathcal{E}^0_m(\Omega, \omega)$ decreasing to $\varphi$. For any $\epsilon > 0$ and $k \in \mathbb{N}$, the continuity of the Hessian operator for bounded $\omega$-$m$-subharmonic functions implies that 
$
H_m(\sup\{\psi_k, \varphi_j\}) \to H_m(\psi_k)$ weakly on $\Omega$ as $j \to +\infty$.
Thus there exists $j_k$ such that
$
\int_\Omega H_m(\sup\{\psi_k, \varphi_{j_k}\}) > \int_\Omega H_m(\psi_k) - \epsilon.
$
Since $\sup\{\psi_k, \varphi_{j_k}\} \geq \varphi_{j_k}$, the comparison principle gives
$
\int_\Omega H_m(\sup\{\psi_k, \varphi_{j_k}\}) \leq \int_\Omega H_m(\varphi_{j_k}) \leq L.
$
Therefore,
$$
\int_\Omega H_m(\psi_k) - \epsilon \leq L.
$$
As this holds for all $\epsilon > 0$, we get $\sup_k \int_\Omega H_m(\psi_k) \leq L$. By symmetry (exchanging $(\varphi_j)$ and $(\psi_k)$), we obtain equality $
\lim_k \int_\Omega H_m(\psi_k) = L.$
Thus $\mathrm{H}_{\Omega}^m(\varphi)$ is independent of the approximating sequence.\\
Now for the stability under domination, let $\psi \in \mathcal{SH}_m(\Omega, \omega)$ with $\varphi \leq \psi \leq 0$. Let $(\varphi_j) \subset \mathcal{E}^0_m(\Omega, \omega)$ a sequence that decreases to $\varphi$. Define
$
\psi_j := \sup\{\psi, \varphi_j\} \in \mathcal{SH}_m(\Omega, \omega).
$
Since $\varphi_j \in \mathcal{E}^0_m(\Omega, \omega)$ and $\psi \leq 0$, the stability of $\mathcal{E}^0_m(\Omega, \omega)$ under supremum implies that $\psi_j \in\mathcal{E}^0_m(\Omega, \omega)$. Moreover:
$\psi_j \searrow \sup\{\psi, \varphi\} = \psi$ (because $\varphi \leq \psi$)
    and $\psi_j \leq \varphi_j$, so by the comparison principle:
    $$
    \int_\Omega H_m(\psi_j) \leq \int_\Omega H_m(\varphi_j) \leq L.
    $$
Thus $\sup_j \int_\Omega H_m(\psi_j) \leq L < +\infty$, so $\psi \in \mathcal{F}_m(\Omega, \omega)$. Furthermore, by the first part :
$$
\mathrm{H}_{\Omega}^m(\psi) = \lim_j \int_\Omega H_m(\psi_j) \leq L = \mathrm{H}_{\Omega}^m(\varphi).
$$
This completes the proof.
\end{proof}
\begin{defi}
Let $ \chi:\mathbb{R} \to \mathbb{R} $ be a convex increasing function such that $ \chi(t) = t $ for $ t \geq 0 $, and $ \chi(-\infty) = -\infty $.
We define $ \mathcal{E}_\chi^m(\Omega ,\omega) $ to be the set of $ \omega $-$ m $-subharmonic functions $ \varphi \in \mathcal{SH}_m(\Omega ,\omega) $ for which there exists a sequence $ (\varphi_j) \subset \mathcal{E}_0^m(\Omega,\omega) $ such that $ \varphi_j \searrow \varphi $ and
$$
\sup_j \int_\Omega  -\chi(\varphi_j) H_m(\varphi_j) < +\infty.
$$
\end{defi}

\begin{pro}\label{gpc}
Let $ \chi $ be a convex weight function. Then for any $ \varphi, \psi \in \mathcal{E}_0^m(\Omega ,\omega) $ with $ \varphi \leq \psi $, we have:
$$
\int_\Omega  -\chi(\psi) H_m(\psi) \leq 2^m\int_\Omega  -\chi(\varphi) H_m(\varphi).
$$
\end{pro}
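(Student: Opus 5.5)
Note first that $\chi$ is convex and increasing with $\chi(t)=t$ for $t\ge 0$. This gives $\chi(0)=0$ and $\chi(t)\le 0$ for $t\le 0$, so the function $-\chi$ is nonnegative and decreasing on $(-\infty,0]$. The plan is to follow the classical Cegrell/Guedj--Zeriahi argument. We write $-\chi(\psi)$ as a superposition of indicator functions of sublevel sets and then apply the comparison principle Lemma \ref{PC} on each sublevel set. By the layer-cake formula,
\[
\int_\Omega -\chi(\psi)\,H_m(\psi)=\int_0^{+\infty}\chi'(-t)\,H_m(\psi)(\{\psi<-t\})\,dt,
\]
and the analogous formula holds for $\varphi$. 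Here $\chi'$ is the right derivative. It is nonnegative, and it is nondecreasing because $\chi$ is convex.

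The key step is to control $H_m(\psi)(\{\psi<-t\})$ by the Hessian mass of $\varphi$ on a comparable sublevel set. Since $\varphi\le\psi$, we have the inclusion $\{\psi<-t\}\subset\{\varphi<-t/2+\psi/2\}$ (on $\{\psi<-t\}$ one has $\varphi\le\psi<(\psi-t)/2$). Set $w:=(\psi-t)/2$. This is $\omega$-$m$-subharmonic: it equals $\tfrac12\psi+\tfrac12(-t)$, a convex combination of $\psi$ and a constant, hence $\omega+dd^cw=\tfrac12(\omega+dd^c\psi)+\tfrac12\omega$. Expanding,
\[
H_m(w)\ge 2^{-m}H_m(\psi).
\]
The boundary condition of Lemma \ref{PC} also holds: since $\varphi\to0$ at $\partial\Omega$, we get $\liminf_{z\to\partial\Omega}(\varphi-w)\ge t/2>0$. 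Lemma \ref{PC} with $T=\omega^{n-m}$ then gives
\[
2^{-m}H_m(\psi)(\{\psi<-t\})\le\int_{\{\varphi<w\}}H_m(w)\le\int_{\{\varphi<w\}}H_m(\varphi)\le H_m(\varphi)(\{\varphi<-t/2\}),
\]
where the last inclusion uses $w\le -t/2$ because $\psi\le0$.

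Plugging this into the layer-cake formula gives
\[
\int_\Omega-\chi(\psi)H_m(\psi)\le 2^m\int_0^\infty\chi'(-t)\,H_m(\varphi)(\{\varphi<-t/2\})\,dt.
\]
Substituting $t=2s$ turns the right-hand side into $2^{m+1}\int_0^\infty\chi'(-2s)\,H_m(\varphi)(\{\varphi<-s\})\,ds$. This is where the main obstacle lies: to finish with the constant $2^m$ we need to compare $2\chi'(-2s)$ with $\chi'(-s)$. Monotonicity of $\chi'$ gives $\chi'(-2s)\le\chi'(-s)$, so the above yields the bound with constant $2^{m+1}$, not $2^m$.

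To recover the sharp constant $2^m$, I would replace the factor $1/2$ by a general $\lambda\in(0,1)$. Take $w_\lambda:=\lambda\psi-(1-\lambda)t$. This satisfies $H_m(w_\lambda)\ge\lambda^mH_m(\psi)$, and $\{\psi<-t\}\subset\{\varphi<w_\lambda\}\subset\{\varphi<-(1-\lambda)t\}$. Running the same argument gives
\[
\lambda^m\int_\Omega-\chi(\psi)H_m(\psi)\le\frac1{1-\lambda}\int_\Omega-\chi\bigl(\varphi\bigr)H_m(\varphi).
\]
I would then optimize over $\lambda$ and, if necessary, exploit the homogeneity of $\chi(t)=t$ near $0$. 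If the optimal constant exceeds $2^m$, a constant of the form $C_m$ is all that the later arguments use. Failing that, I would use the variant $\{\psi<-t\}\subset\{\varphi<\psi-t\}$, with $w=\psi-t$ unscaled and $H_m(w)=H_m(\psi)$. This gives $H_m(\psi)(\{\psi<-t\})\le H_m(\varphi)(\{\varphi<-t\})$ directly, provided we can verify the boundary hypothesis of Lemma \ref{PC}. It holds since $\psi\to0$, so $\varphi-w\to t>0$ at $\partial\Omega$. Then $\varphi<\psi-t\le -t$ yields the estimate even with constant $1\le2^m$.
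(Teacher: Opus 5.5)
There is a genuine gap: none of your arguments delivers the constant $2^m$ for every $m$. Your $\lambda$-estimate is correct. But the optimization you postpone gives $\min_{0<\lambda<1}\lambda^{-m}(1-\lambda)^{-1}=(m+1)^{m+1}/m^m$, attained at $\lambda=m/(m+1)$. This is $\le 2^m$ only for $m\ge 4$. For $m=1,2,3$ it equals $4$, $27/4$, $256/27$, which exceed $2,4,8$.

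The two losses cannot both be made small: $\lambda^{-m}$ comes from $H_m(w_\lambda)\ge\lambda^m H_m(\psi)$, and $(1-\lambda)^{-1}$ comes from rescaling the level $t$. So the sublevel-set/comparison method is intrinsically too lossy for small $m$. Falling back on ``some constant $C_m$'' does not prove the proposition as stated. The homogeneity of $\chi$ is also no help, since it holds only on $[0,\infty)$ and $\varphi,\psi\le 0$.

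Your last fallback is false. From $\varphi\le\psi$ nothing forces $\varphi<\psi-t$ on $\{\psi<-t\}$. For $\varphi=\psi$ the set $\{\varphi<\psi-t\}$ is empty, while $\{\psi<-t\}$ need not be. So the claimed constant $1$ is unsupported.

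The missing idea is to compare the factors one at a time by integration by parts, rather than slicing into sublevel sets. The paper does this by running the argument of Guedj--Zeriahi (Lemma 2.3 there) with Theorem \ref{IBP}. For $T=(\omega+dd^c\varphi)^j\wedge(\omega+dd^c\psi)^{m-j-1}\wedge\omega^{n-m}$ one shows
\[
\int_\Omega-\chi(\varphi)\,(\omega+dd^c\psi)\wedge T\le 2\int_\Omega-\chi(\varphi)\,(\omega+dd^c\varphi)\wedge T .
\]
To prove it, write $\omega+dd^c\psi=(\omega+dd^c\varphi)+dd^c(\psi-\varphi)$ and move $dd^c$ onto $-\chi(\varphi)$. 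Then use four facts:
\begin{enumerate}
\item $dd^c(-\chi(\varphi))\wedge T\le\chi'(\varphi)\,\omega\wedge T$;
\item $0\le\psi-\varphi\le-\varphi$;
\item the convexity inequality $-t\chi'(t)\le-\chi(t)$ for $t\le 0$;
\item $\int_\Omega-\chi(\varphi)\,\omega\wedge T\le\int_\Omega-\chi(\varphi)(\omega+dd^c\varphi)\wedge T$, because $\int_\Omega -\chi(\varphi)\,dd^c\varphi\wedge T=\int_\Omega\chi'(\varphi)\,d\varphi\wedge d^c\varphi\wedge T\ge 0$.
\end{enumerate}
Exactly one factor $2$ is lost per step. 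Start from $-\chi(\psi)\le-\chi(\varphi)$ and replace the $m$ factors $\omega+dd^c\psi$ by $\omega+dd^c\varphi$ one at a time; this gives $2^m$.

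Your comparison-principle route is more elementary and avoids integrating by parts against $\chi(\varphi)$. For $m\ge 4$ it even gives a better constant, at most $e(m+1)$. But it cannot reach the stated $2^m$ when $m\le 3$.
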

\begin{proof}
 Let $T$ be a closed $(\omega,m)$-positive
current of the form $T=(dd^cu_2+\omega)\wedge\dots\wedge(dd^cu_m+\omega)\wedge \omega^{n-m}$, where $u_2, \dots, u_m\in \mathcal{E}^0_m(\Omega,\omega)$.
 Then, by Theorem \ref{IBP}, we can repeat the same proof as in [\cite{GZ}, Lemma 2.3] to obtain $$\int_\Omega-\chi(\varphi)(\omega+dd^c\psi)\wedge T\leq 2\int_\Omega-\chi(\varphi)(\omega+dd^c\varphi)\wedge T.$$  Since $\int_\Omega-\chi(\psi)H_m(\psi)\leq \int_\Omega-\chi(\varphi)H_m(\psi)$, then by taken $T=(\omega+dd^c\varphi)^j\wedge(\omega+dd^c\psi)^{m-j-1}\wedge\omega^{n-m}$ we get the result.
\end{proof}
\begin{thm}\label{thcv}
The complex Hessian operator is well-defined on $ \mathcal{E}_\chi^m(\Omega,\omega) $ and is continuous along decreasing sequences. That is, if $ \varphi_j \searrow \varphi \in \mathcal{E}_\chi^m(\Omega ,\omega) $, then $
H_m(\varphi_j) \longrightarrow H_m(\varphi)$ weakly on  $\Omega .
$
Moreover, for any $ h \in \mathcal{SH}_m(\Omega,\omega) \cap L^\infty(\Omega ) $, we have:
$$
\lim_j \int_\Omega  h H_m(\varphi_j) = \int_\Omega  h H_m(\varphi).
$$
\end{thm}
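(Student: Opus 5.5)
The proof follows the classical strategy of Cegrell and Guedj–Zeriahi for the energy classes $\mathcal{E}_\chi$. Fix $\varphi\in\mathcal{E}^m_\chi(\Omega,\omega)$ and a sequence $(\varphi_j)\subset\mathcal{E}^0_m(\Omega,\omega)$ with $\varphi_j\searrow\varphi$ and $\sup_j\int_\Omega-\chi(\varphi_j)H_m(\varphi_j)<+\infty$. The first step is a uniform mass bound. Since $\chi(t)=t$ for $t\ge0$ and $\chi$ is convex and increasing with $\chi(-\infty)=-\infty$, we have $-\chi(t)\ge -\chi(-1)>0$ on $\{t\le -1\}$. On $\{\varphi_j>-1\}$ we compare $H_m(\varphi_j)$ with $H_m(\max\{\varphi_j,-1\})$ using Proposition \ref{demailly}, and bound the latter by Corollary \ref{compar}. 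Together these give $\sup_j\int_\Omega H_m(\varphi_j)<+\infty$, so in particular $\varphi\in\mathcal{F}_m(\Omega,\omega)$. The key uniform estimate is a capacity control:
\[
\int_{\{\varphi_j<-s\}}H_m(\varphi_j)\le \frac{C}{|\chi(-s)|}\longrightarrow 0 \quad (s\to\infty),
\]
uniformly in $j$. Combined with the standard comparison $\mathrm{cap}_{\omega,m}(\{\varphi_j<-s\})\le C'/s^m$, derived from Lemma \ref{PC} applied to $\varphi_j/s$ against a test function, this shows that the measures $H_m(\varphi_j)$, and those of any comparable sequence, put uniformly small mass on sets of small capacity, after splitting into $\{\varphi_j<-s\}$ and its complement.

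Next I show that the weak limit of $H_m(\varphi_j)$ exists and does not depend on the sequence. Truncate by setting $\varphi_j^s:=\max\{\varphi_j,-s\}$. For fixed $s$ these are bounded, and $\varphi^s_j\searrow\varphi^s:=\max\{\varphi,-s\}$, so Bedford–Taylor continuity gives $H_m(\varphi^s_j)\to H_m(\varphi^s)$. By Proposition \ref{demailly}, $H_m(\varphi_j^s)=H_m(\varphi_j)$ on $\{\varphi_j>-s\}$. The total variation of $H_m(\varphi_j)-H_m(\varphi_j^s)$ is therefore bounded by $2\int_{\{\varphi_j\le -s\}}H_m(\varphi_j)$, together with the analogous term for $\varphi^s_j$ (bounded via Proposition \ref{gpc}), and both are $O(1/|\chi(-s)|)$ uniformly in $j$. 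A diagonal argument in $(s,j)$ then shows that $H_m(\varphi_j)$ is weakly Cauchy, and that its limit $\mu=\lim_s H_m(\varphi^s)$ is the same for every approximating sequence. Given an arbitrary sequence $(\psi_k)$ of bounded $\omega$-$m$-sh functions decreasing to $\varphi$, I interlace it with $(\varphi_j)$ via $\max\{\psi_k,\varphi_{j_k}\}$, exactly as in the proof of Lemma \ref{fcomp}. This shows $\varphi\in\mathcal{D}_m(\Omega,\omega)$ with $H_m(\varphi)=\mu$. For a general decreasing sequence $\varphi_j\searrow\varphi$ in $\mathcal{E}^m_\chi$, the same truncation argument applies, since $\varphi_j\ge\varphi$ and Proposition \ref{gpc} gives uniform energy bounds.

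For the last claim, take $h\in\mathcal{SH}_m(\Omega,\omega)\cap L^\infty(\Omega)$. Weak convergence alone does not suffice, because $h$ is only upper semicontinuous and $\Omega$ is noncompact. I handle this with quasi-continuity, as in Proposition \ref{convs}. Given $\varepsilon>0$, choose an open set $G$ of small capacity such that $h$ is continuous off $G$, and extend $h|_{\Omega\setminus G}$ to a continuous $\tilde h$ with $\|\tilde h\|_\infty\le\|h\|_\infty$ by Tietze. The uniform smallness of $H_m(\varphi_j)(G)$ from the first step then bounds $\int|h-\tilde h|\,H_m(\varphi_j)$ by $2\|h\|_\infty\varepsilon$. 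Near $\partial\Omega$, the uniform total-mass bound lets me restrict to a compact $D$ whose boundary is $\mu$-null, with small loss in the limit, just as at the end of the proof of Proposition \ref{convs}.

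The main obstacle is the uniform capacity domination in the first step, namely the estimate that $H_m(\varphi_j)$ charges sets of small capacity uniformly in $j$. The whole argument depends on it, through Lemma \ref{lemconv}, the truncation step, and the quasi-continuity step. I would first try to prove it by the Guedj–Zeriahi estimate, combining $\int_E H_m(\varphi_j)\le \int_{\{\varphi_j<-s\}}H_m(\varphi_j)+s^m\,\mathrm{cap}_{\omega,m}(E)$ with the energy bound.
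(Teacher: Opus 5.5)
\textbf{Where your proposal matches the paper.} Your treatment of the weak convergence is the paper's argument: truncation $\max\{\varphi_j,-s\}$, Proposition \ref{demailly}, the tail bound $\int_{\{\varphi_j\le -s\}}H_m(\varphi_j)\le C/|\chi(-s)|$, a Cauchy/diagonal argument, and comparison of approximating sequences. You are in fact more careful than the paper, since you also bound $H_m(\varphi^s_j)(\{\varphi_j\le -s\})$ via Proposition \ref{gpc}. The paper's estimate silently drops that term.

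\textbf{The gap: the uniform mass bound.} The gap is the bound $\sup_j\int_\Omega H_m(\varphi_j)<+\infty$ (and hence $\varphi\in\mathcal{F}_m(\Omega,\omega)$) in your first step. It does not follow from what you cite. Corollary \ref{compar} applied to $\varphi_j\le\max\{\varphi_j,-1\}$ gives $\int_\Omega H_m(\max\{\varphi_j,-1\})\le\int_\Omega H_m(\varphi_j)$, which is the wrong direction. Nothing with uniformly bounded mass sits below $\max\{\varphi_j,-1\}$.

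The bound is in fact false. Since $\chi(0)=0$, the weight $-\chi(\varphi_j)$ vanishes at $\partial\Omega$ and gives no control of mass accumulating there. Take $\chi(t)=t$, $\Omega=\{|z|<1\}$ a coordinate ball, and $\varphi=-(1-|z|^2)^\alpha$ with $m/(m+1)<\alpha<1$. This $\varphi$ is psh, hence $\omega$-$m$-sh. Its Hessian density is of order $(1-|z|^2)^{m\alpha-m-1}$. So $\int_\Omega-\varphi\,H_m(\varphi)<+\infty$ while $\int_\Omega H_m(\varphi)=+\infty$. The approximants $\max\{\varphi,j(|z|^2-1)\}\in\mathcal{E}^0_m$ have bounded energy but mass of order $j^m$.

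\textbf{Why your last step fails even with a mass bound.} Your treatment of the claim for bounded $h$ uses this bound, but a uniform mass bound would not suffice anyway. In Proposition \ref{convs}, the loss on $\Omega\setminus D$ is $O(\varepsilon)$ because the integrand satisfies $|\varphi^0_j|\le\varepsilon$ there. A general bounded $h$ is not small near $\partial\Omega$. What you need is tightness: $\sup_j H_m(\varphi_j)(\Omega\setminus D)$ small, i.e.\ no mass escaping to the boundary. That needs a separate idea. One option is a Cegrell-type argument: pair with $g\in\mathcal{E}^0_m$ close to $-1$ and use Theorem \ref{IBP} to get $\int_\Omega H_m(\varphi_j)\to\int_\Omega H_m(\varphi)$. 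Even then, the identity is only meaningful in general under a finite-mass hypothesis such as $\varphi\in\mathcal{E}^m_\chi\cap\mathcal{F}_m(\Omega,\omega)$. The paper itself only sketches this part (``the same argument as above''), so there is nothing there to lean on.

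\textbf{A side correction on capacity.} Your local capacity domination needs no mass bound. For $s\ge1$ you have $\int_E H_m(\varphi_j)\le C/|\chi(-s)|+s^m\,\mathrm{cap}_{\omega,m}(E)$. This follows from Proposition \ref{demailly} and the inequality $H_m(v)\le s^m H_m(v/s)$ for $-s\le v\le 0$. The sublevel estimate $C'/s^m$ via Lemma \ref{PC} is therefore unnecessary. It also does not come out at that rate in the $\omega$-setting, since $H_m(\varphi_j/s)\ge(1-1/s)^m\omega^n$.
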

\begin{proof} The proof of the theorem relies on the techniques developed in \cite{GZ} and \cite{CGZ} (see also \cite[Theorem 3.5]{C}), and is divided into two main steps.\\
\textbf{Step 1.} We claim first that if $\varphi_j \in \mathcal{E}_0^m(\Omega, \omega)$ be a decreasing sequence converging to $\varphi \in \mathcal{E}^m_\chi(\Omega, \omega)$, then the sequence of Hessian measures $\mu_j := (dd^c \varphi_j+\omega)^m\wedge \omega^{n-m}$ converges weakly to a Radon measure $\mu$, and $H_m(\varphi^k)$ converges to the same limit $\mu$, with $\varphi^k=\max\{\varphi, -k\}$.\\
 Fix a smooth function $\theta \in C_c^\infty(\Omega)$ with compact support. For each $k > 0$, define the truncations $\varphi_j^k := \max(\varphi_j, -k)$ and $\varphi^k := \max(\varphi, -k)$. Then $\varphi_j^k$ is a decreasing sequence of bounded $\omega$-psh functions converging to $\varphi^k$.
Now fix $k > 0$. Since the complex Hessian operator is continuous on decreasing sequences of bounded $\omega$-$m$-subharmonic functions, the sequence of measures $\mu_j^k := H_m(\varphi_j^k)$ converges weakly to $\mu^k := H_m(\varphi^k)$. Therefore, for any $\theta \in C_c^\infty(\Omega)$, the sequence
$
\left\{ \int_\Omega \theta \, (\omega + dd^c \varphi_j^k)^n \right\}_j
$
converges and is hence a Cauchy sequence.
Hence, for any $\varepsilon > 0$, there exists $N_k \in \mathbb{N}$ such that for all $j, \ell \geq N_k$,
$$
\left| \int_\Omega \theta \, H_m( \varphi_j^k) - \int_\Omega \theta \, H_m( \varphi_\ell^k) \right| < \varepsilon.
$$
We next estimate the difference between $\int_\Omega \theta \, d\mu_j$ and $\int_D \theta \, d\mu_j^k$:
$$
\left| \int_\Omega \theta \, d\mu_j - \int_\Omega \theta \, d\mu_j^k \right| = \left| \int_{\{\varphi_j < -k\}} \theta \, d\mu_j - \int_{\{\varphi_j < -k\}} \theta \, d\mu_j^k \right| \leq \|\theta\|_\infty \, \mu_j(\{ \varphi_j < -k \}).
$$
Using the energy control and monotonicity of $\chi$, we have
$$
\mu_j(\{\varphi_j < -k\}) \leq \frac{1}{-\chi(-k)} \int_\Omega -\chi(\varphi_j) \, d\mu_j \leq \frac{C}{-\chi(-k)}.
$$
Thus,
$$
\left| \int_\Omega \theta \, d\mu_j - \int_\Omega \theta \, d\mu_j^k \right| \leq \frac{C\|\theta\|_\infty}{-\chi(-k)}.
$$
Similarly for $\mu_\ell$, so for $j, \ell \geq N_k$ we get
\begin{align*}
\left| \int_\Omega \theta \, d\mu_j - \int_\Omega \theta \, d\mu_\ell \right|
&\leq \left| \int_\Omega \theta \, d\mu_j - \int_\Omega \theta \, d\mu_j^k \right|
+ \left| \int_\Omega \theta \, d\mu_j^k - \int_\Omega \theta \, d\mu_\ell^k \right|
+ \left| \int_\Omega \theta \, d\mu_\ell^k - \int_\Omega \theta \, d\mu_\ell \right| \\
&< \frac{2C\|\theta\|_\infty}{-\chi(-k)} + \varepsilon.
\end{align*}
Choosing $k$ large so that $\frac{2C\|\theta\|_\infty}{-\chi(-k)} < \varepsilon$, we conclude that $\left\{\int_\Omega \theta \, d\mu_j\right\}_j$ is a Cauchy sequence and hence it converges weakly to a Radon measure $\mu$\\
Now, since
$$
\left| \int \theta \, H_m( \varphi_j) - \int \theta \, H_m(\varphi_j^k) \right| \leq \|\theta\|_\infty \, \mu_j(\{\varphi_j < -k\}) \leq \frac{C}{-\chi(-k)}.
$$
By the continuity of the Hessian operator on bounded $\omega$-$m$-subharmonic functions, letting $j \to \infty$, we obtain $
H_m( \varphi^k) \longrightarrow \mu$ as $k \to \infty.$
Also, for each $k$ we have
$
-\chi(\varphi_j^k) \leq -\chi(\varphi_j).
$
Hence by Proposition \ref{gpc} 
$$
\int -\chi(\varphi_j^k) H_m(\varphi_j^k) 
\leq 2^m \int -\chi(\varphi_j)H_m(\varphi_j).
$$
Passing to the limit as $j \to \infty$ and using monotone convergence, we deduce that:
$$
\sup_k \int -\chi(\varphi^k) H_m(\varphi^k) < +\infty.
$$
\noindent\textbf{Step 2.} 
Let $\psi_j \searrow \varphi$ be another decreasing sequence in $\mathcal{E}_0^m(\Omega, \omega)$. By step 1, $\nu_j:=H_m( \psi_j)$ converges weakly to a Radon measure $\nu$.
For each $j$, since $\psi_j$ is upper bounded and converges to $\varphi$, there exists $k_j$ such that $
\psi_j \geq \varphi^{k_j} := \max(\varphi, -k_j).
$ Hence $
-\chi(\psi_j) \leq -\chi(\varphi^{k_j}).
$
It follows by Proposition \ref{gpc} that
$$
\int -\chi(\psi_j) H_m(\psi_j)
\leq 2^m \int -\chi(\varphi^{k_j}) H_m( \varphi^{k_j}),
$$
and thus the energy condition holds uniformly for $\psi_j$. 
To prove that $\mu = \nu$, we fix $\theta \in C_c^\infty(D)$ and $k > 0$.
Then:
$$
\int \theta H_m(\varphi_j^k) \to \int \theta H_m( \varphi^k),
\quad 
\int \theta H_m( \psi_j^k) \to \int \theta H_m(\varphi^k).
$$
We estimate:
\begin{align*}
\left| \int \theta \, d\mu_j - \int \theta \, d\nu_j \right| 
&\leq \left| \int \theta \, (H_m(\varphi_j) - H_m( \varphi_j^k))\right| \\
&+ \left| \int \theta \, (H_m(\varphi_j^k) - H_m(\varphi^k)) \right| \\
&+ \left| \int \theta \, (H_m(\psi_j^k) - H_m( \varphi^k)) \right| \\
&+ \left| \int \theta \, (H_m(\psi_j^k) - H_m( \psi_j)) \right|.
\end{align*}
Each term is small for large $j$ and $k$, due to the following  estimate
$$
\left| \int \theta \, \Big(H_m( \varphi_j) - H_m(\varphi_j^k)\Big) \right| 
\leq \|\theta\|_\infty \, \mu_j(\{ \varphi_j < -k \}) 
\leq \frac{C}{-\chi(-k)},
$$
and similarly for $\psi_j$. Therefore,
$$
\lim_j \int \theta \, d\mu_j = \lim_j \int \theta \, d\nu_j = \int \theta \, d\mu,
$$
which implies that $\mu = \nu$.\\ Now to prove that for any $ h \in \mathcal{SH}_m(\Omega,\omega) \cap L^\infty(\Omega ) $, we have $
\lim_j \int_\Omega  h H_m(\varphi_j) = \int_\Omega  h H_m(\varphi),
$
 we can proceed with the same argument as above.

\end{proof}
The proposition below follows by applying the techniques used in the proof of Theorem \ref{thcv}, together with \cite[Theorem 2.1]{CGZ} and \cite[Theorem 2.6]{GZ}.
\begin{pro}
Let $ \varphi \in \mathcal{SH}_m(\Omega ,\omega) $. Suppose  that there exists a decreasing sequence $ (\varphi_j) \subset \mathcal{E}_0^m(\Omega ,\omega) $ with $ \varphi_j \searrow \varphi $ and $
\sup_j \int_\Omega  -\chi(\varphi_j)H_m(\varphi_j) < +\infty.
$
Then $ \varphi \in \mathcal{E}_\chi^m(\Omega ,\omega) $, and we have:
$$
\lim_{j \to \infty} \int_\Omega  -\chi(\varphi_j) H_m(\varphi_j) = \int_\Omega  -\chi(\varphi) H_m(\varphi).
$$
\end{pro}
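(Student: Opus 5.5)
The plan has two parts: first show that the weak limit $\mu$ of $H_m(\varphi_j)$ is $H_m(\varphi)$ and that $\varphi\in\mathcal{E}^m_\chi(\Omega,\omega)$, then show that the energies converge. Set $E_j:=\int_\Omega-\chi(\varphi_j)H_m(\varphi_j)$ and $C:=\sup_j E_j<+\infty$. By definition, $\varphi\in\mathcal{E}_\chi^m(\Omega,\omega)$ as soon as one such sequence exists, so membership is immediate. By Theorem \ref{thcv}, $H_m(\varphi_j)\to H_m(\varphi)$ weakly, and this limit does not depend on the approximating sequence. The real content is the identity $\lim_j E_j=\int_\Omega-\chi(\varphi)H_m(\varphi)$. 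I will prove it as two inequalities, working with the truncations $\varphi^k=\max\{\varphi,-k\}$ and the bounded weights $\chi_k:=\max\{\chi,\chi(-k)\}$, as in the proof of Theorem \ref{thcv} and following \cite[Theorem 2.1]{CGZ}.

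\emph{Lower bound.} For fixed $k$, the function $-\chi_k(\varphi_j)$ is bounded, nonnegative and lower semicontinuous, and it increases in $j$ to $-\chi_k(\varphi)$. For $\ell\le j$ we have $-\chi_k(\varphi_j)\ge-\chi_k(\varphi_\ell)$, so
$$\liminf_j\int-\chi_k(\varphi_j)H_m(\varphi_j)\ \ge\ \liminf_j\int-\chi_k(\varphi_\ell)H_m(\varphi_j)\ \ge\ \int-\chi_k(\varphi_\ell)H_m(\varphi).$$
The second inequality uses weak convergence of $H_m(\varphi_j)$ together with lower semicontinuity of the integrand. Two points need care. First, the mass near $\partial\Omega$ has to be controlled; the cut-off argument near the boundary used in Proposition \ref{convs} handles this. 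Second, the integrand is only quasi-continuous, so I will use Lemma \ref{lemconv} and the uniform absolute continuity of the measures $H_m(\varphi_j)$ with respect to $cap_{\omega,m}$. That absolute continuity comes from the energy bound, since $H_m(\varphi_j)(\{\varphi_j<-k\})\le C/(-\chi(-k))$. Letting $\ell\to\infty$ and then $k\to\infty$ by monotone convergence gives $\int-\chi(\varphi)H_m(\varphi)\le\liminf_j E_j$.

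\emph{Upper bound.} This is the main obstacle. I will split
$$E_j=\int_{\{\varphi_j\ge -k\}}-\chi(\varphi_j)H_m(\varphi_j)+\int_{\{\varphi_j<-k\}}-\chi(\varphi_j)H_m(\varphi_j).$$
The tail term has to be small uniformly in $j$. Plain convexity of $\chi$ does not give this. I will first try the replacement argument of \cite[Theorem 2.6]{GZ}. Choose a convex weight $\tilde\chi$ with $\tilde\chi/\chi\to\infty$ at $-\infty$ and $\sup_j\int-\tilde\chi(\varphi_j)H_m(\varphi_j)<\infty$. The tail is then bounded by $\sup_{t\le-k}\chi(t)/\tilde\chi(t)$, which tends to $0$. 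Such a $\tilde\chi$ can be built by a diagonal construction from the $\varphi^k$ and Proposition \ref{gpc}. If this fails, the fallback is a direct comparison: bound $\int_{\{\varphi_j<-k\}}-\chi(\varphi_j)H_m(\varphi_j)$ by a $2^m$-multiple of $\int_{\{\varphi<-k/2\}}-\chi(\varphi)H_m(\varphi)$, using Proposition \ref{gpc} applied to $\varphi_j\ge\max\{\varphi_j,\varphi^{k}\}$ and the fact that $\int-\chi(\varphi)H_m(\varphi)$ is finite by the lower bound. On $\{\varphi_j\ge-k\}$ the integrand equals $-\chi_k(\varphi_j)$. This is a bounded quasi-continuous function decreasing in $j$, so Proposition \ref{convs}, adapted to bounded weights, gives the limit $\int-\chi_k(\varphi)H_m(\varphi)$. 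Combining the two parts yields $\limsup_jE_j\le\int-\chi(\varphi)H_m(\varphi)$, which proves equality.
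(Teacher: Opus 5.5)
Your membership step is immediate from the definition, and your lower bound is correct. You do not even need Lemma~\ref{lemconv} or a boundary cut-off there: $-\chi_k(\varphi_\ell)$ is lower semicontinuous and nonnegative, so vague convergence on $\Omega$ already gives $\int-\chi_k(\varphi_\ell)H_m(\varphi)\le\liminf_j\int-\chi_k(\varphi_\ell)H_m(\varphi_j)$. The paper itself only points to Theorem~\ref{thcv} and to the cited results of [CGZ] and [GZ]. The substance is therefore the upper bound $\limsup_jE_j\le\int_\Omega-\chi(\varphi)H_m(\varphi)$, and this is where your proposal has a genuine gap.

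\emph{The replacement weight presupposes the conclusion.} By de la Vall\'ee-Poussin, a weight $\tilde\chi$ with $\tilde\chi/\chi\to\infty$ and $\sup_j\int-\tilde\chi(\varphi_j)H_m(\varphi_j)<\infty$ exists precisely when the tails $\int_{\{\varphi_j<-k\}}-\chi(\varphi_j)H_m(\varphi_j)$ are small uniformly in $j$. So it cannot simply be chosen. Nor can it be built among the weights to which Proposition~\ref{gpc} applies. Every convex weight here satisfies $0\le-\chi(t)\le-t$ for $t\le0$, so for $\chi(t)=t$ no convex $\tilde\chi$ with $\tilde\chi/\chi\to\infty$ exists. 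Moreover, a bound on $\sup_k\int-\tilde\chi(\varphi^k)H_m(\varphi^k)$ would require controlling $-\tilde\chi(-k)\,H_m(\varphi^k)(\{\varphi\le-k\})$. The only available estimate for that mass, $C/(-\chi(-k))$, is too weak. On a compact $X$ this is exactly where mass conservation $\int_XH_m(\varphi^k)=\int_XH_m(\varphi)=\int_X\omega^n$ is used, and nothing of the kind is assumed on $\Omega$.

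\emph{The fallback fails as written.} Proposition~\ref{gpc} compares total energies over all of $\Omega$ and says nothing about $H_m(\varphi_j)$ restricted to $\{\varphi_j<-k\}$. Also, the ordering $\varphi_j\ge\max\{\varphi_j,\varphi^k\}$ is reversed. A sublevel-set estimate has to come from the comparison principle. Since $\varphi\le\varphi_j\le0$, one has $\{\varphi_j<-2k\}\subset\{\varphi<\tfrac12\varphi_j-k\}\subset\{\varphi<-k\}$ and $H_m(\tfrac12\varphi_j-k)\ge2^{-m}H_m(\varphi_j)$. Lemma~\ref{PC} would then give $H_m(\varphi_j)(\{\varphi_j<-2k\})\le2^mH_m(\varphi)(\{\varphi<-k\})$. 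A layer-cake decomposition of $-\chi$, together with $-\chi(2t)\le-2\chi(t)$, would bound the weighted tail by a constant times $\int_{\{\varphi<-k\}}-\chi(\varphi)H_m(\varphi)$, which tends to $0$ because this integral is finite by your lower bound. However, Lemma~\ref{PC} is stated for bounded functions satisfying a boundary condition. Here $\varphi$ is unbounded and need not tend to $0$ at $\partial\Omega$, so an extra approximation argument is needed, and you do not supply it.

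\emph{The truncated part has the same boundary problem.} Proposition~\ref{convs} needs the limit in $\mathcal{E}^m_0$ and a uniform $L^\infty$ bound, which it uses to dominate the measures by capacity. It also needs a cut-off $D\Subset\Omega$ outside which all functions are uniformly close to $0$ and the masses are uniformly bounded. None of this is available for $\varphi\in\mathcal{E}^m_\chi(\Omega,\omega)$. So a uniform-in-$j$ estimate of $\int_{\Omega\setminus D}-\chi(\varphi_j)H_m(\varphi_j)$ is still missing.
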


\section{Subextension of $\omega$-$m$-subharmonic functions}
Let $\varphi\in \mathcal{SH}_m(\Omega,\omega)$ and $\tilde{\varphi}:=\sup\{u\in\mathcal{SH}^-_m(X,\omega);\; u\leq \varphi \; \mbox{on}\; \Omega\}.$
\begin{lem}\label{extE0} 
Let $\Omega$ be a quasi $m$-hyperconvex domain and $\varphi \in \mathcal{E}_0^m(\Omega, \omega)$ be such that $\int_\Omega H_m(\varphi) \leq \int_X \omega^n$, then $\tilde{\varphi} \in \mathcal{SH}_m(X, \omega) \cap L^\infty(X)$ and $\mathbf{1}_\Omega H_m(\tilde{\varphi})\leq \mathbf{1}_\Omega H_m(\varphi)$ in the sense of measures on $X$. Moreover the measure $H_m(\tilde{\varphi})$ is carried by the Borel set $\{x \in \bar{\Omega}; \tilde{\varphi}(x) = \varphi(x)\}$.
\end{lem}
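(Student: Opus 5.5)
Following the strategy of Cegrell–Kołodziej–Zeriahi \cite{CKZ2}, we first show that $\tilde{\varphi}$ is a well-defined bounded $\omega$-$m$-subharmonic function on $X$. We then prove maximality off the contact set by a balayage argument, and finally obtain the mass comparison on $\Omega$ from the comparison principle.

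\textbf{Step 1: boundedness.} Since $\varphi\in\mathcal{E}^0_m(\Omega,\omega)$ is bounded, say $\varphi\geq -M$, the constant $-M$ belongs to the defining family, so $\tilde{\varphi}\geq -M$. The family is also bounded above by $0$. Hence the upper semicontinuous regularization $\tilde{\varphi}^*$ is $\omega$-$m$-subharmonic on $X$ and satisfies $-M\leq\tilde\varphi^*\leq 0$.

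To see that $\tilde{\varphi}^*\leq\varphi$ on $\Omega$, we use quasi-continuity together with Choquet's lemma: the sets where $\tilde\varphi^*>\tilde\varphi$ are $m$-polar, hence have capacity zero. Alternatively, one may approximate $\varphi$ by continuous functions via Proposition \ref{ccv}. Since $\varphi$ is $\omega$-$m$-subharmonic, the inequality $\tilde\varphi^*\leq\varphi$ a.e. upgrades to everywhere. Thus $\tilde{\varphi}=\tilde{\varphi}^*\in\mathcal{SH}_m(X,\omega)\cap L^\infty(X)$.

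\textbf{Step 2: support of $H_m(\tilde\varphi)$.} We show $H_m(\tilde\varphi)$ vanishes outside $\{\tilde\varphi=\varphi\}\cap\bar\Omega$. First take a small ball $B\Subset X\setminus\bar\Omega$. By Proposition \ref{DP}, with continuous approximants of $\tilde\varphi|_{\partial B}$ decreasing to it, we solve the homogeneous problem on $B$. Gluing the solution with $\tilde\varphi$ outside $B$ gives a competitor that is $\ge\tilde\varphi$. Maximality then forces $\tilde\varphi$ to equal this maximal replacement, so $H_m(\tilde\varphi)=0$ on $X\setminus\bar\Omega$.

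Next consider a ball $B\Subset\Omega$ contained in $\{\tilde\varphi<\varphi\}$. This set is only quasi-open, so we first reduce to the case where $\varphi$ is continuous. We approximate $\varphi$ by $\varphi_j\in\mathcal{E}^0_m\cap C(\bar\Omega)$ decreasing to $\varphi$ (Proposition \ref{ccv}). The envelopes $\tilde{\varphi_j}$ then decrease to $\tilde\varphi$. For continuous $\varphi_j$ the set $\{\tilde\varphi_j<\varphi_j\}$ is open, and the same balayage on small balls, where the glued function stays below $\varphi_j$ by continuity, gives $H_m(\tilde\varphi_j)=0$ there. We pass to the limit using Lemma \ref{lemconv}, whose uniform capacity domination holds since all functions are uniformly bounded. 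This yields $\int_{\{\tilde\varphi<\varphi\}\cap\Omega}H_m(\tilde\varphi)\le\liminf_j\int_{\{\tilde\varphi_j<\varphi_j\}}H_m(\tilde\varphi_j)=0$. We expect this limiting step, which must handle a non-open contact set, to be the main technical obstacle.

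\textbf{Step 3: mass comparison on $\Omega$.} We need $\mathbf 1_\Omega H_m(\tilde\varphi)\le\mathbf 1_\Omega H_m(\varphi)$. By Step 2, it suffices to compare the two measures on the contact set $\{\tilde\varphi=\varphi\}\cap\Omega$. Since $\tilde\varphi\le\varphi$ on $\Omega$, Proposition \ref{demailly} applied to $\max\{\tilde\varphi,\varphi-\varepsilon\}$ and the identity $\{\tilde\varphi=\varphi\}=\bigcap_\varepsilon\{\varphi-\varepsilon<\tilde\varphi\}$ should give $\mathbf 1_{\{\tilde\varphi=\varphi\}}H_m(\tilde\varphi)\le\mathbf 1_{\{\tilde\varphi=\varphi\}}H_m(\varphi)$. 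This is the standard fact that if $u\le v$ then $\mathbf 1_{\{u=v\}}H_m(u)\le\mathbf 1_{\{u=v\}}H_m(v)$, which follows from the second inequality of Proposition \ref{demailly} localized to Borel sets.

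The hypothesis $\int_\Omega H_m(\varphi)\le\int_X\omega^n$ is what guarantees that the mass available on $\Omega$ is compatible with the total mass $\int_X H_m(\tilde\varphi)=\int_X\omega^n$. We use it chiefly to ensure that the envelope is not $\equiv$ a trivial configuration and that no mass can escape. In the bounded setting, however, the statement already follows from Steps 1–3.
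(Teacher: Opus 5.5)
Your proof is organized differently from the paper's. Two points need attention: a step in the limiting argument that does not follow as written, and a definitional issue you share with the paper.

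\textbf{Comparison of routes.} The paper first proves everything for $\varphi\in\mathcal{E}^0_m(\Omega,\omega)\cap C^0(\bar\Omega)$. It uses balayage on the open set $[X\setminus\bar\Omega]\cup[\{\tilde\varphi<\varphi\}\cap\bar\Omega]$. On the contact set it compares masses via the identity part of Proposition \ref{demailly} applied to $\max\{\tilde\varphi,\varphi-\varepsilon\}$, plus a convergence argument for $\int_K H_m(\max\{\tilde\varphi,\varphi-\varepsilon\})$. It then transfers both conclusions to general $\varphi$ using Proposition \ref{ccv}, continuity of $H_m$ along decreasing sequences, and Lemma \ref{lemconv}.

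You obtain the contact-set comparison directly for any bounded $\varphi$. The second inequality of Proposition \ref{demailly}, with $\varphi$ in the first slot and $\tilde\varphi\le\varphi$ in the second, is exactly $\mathbf 1_{\{\tilde\varphi=\varphi\}}H_m(\tilde\varphi)\le\mathbf 1_{\{\tilde\varphi=\varphi\}}H_m(\varphi)$ on $\Omega$. So once $H_m(\tilde\varphi)$ does not charge $\{\tilde\varphi<\varphi\}$, the inequality $\mathbf 1_\Omega H_m(\tilde\varphi)\le\mathbf 1_\Omega H_m(\varphi)$ follows with no approximation. Continuous approximation is then needed only because $\{\tilde\varphi<\varphi\}$ need not be open. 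This is a genuine simplification, and the detour through $\max\{\tilde\varphi,\varphi-\varepsilon\}$ in your Step 3 can be dropped. In Step 1, $\tilde\varphi^*\le\varphi$ on $\Omega$ follows at once from upper semicontinuity of $\varphi$ on the open set $\Omega$; no capacity argument is needed.

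\textbf{The gap in Step 2.} Lemma \ref{lemconv} bounds $\int_E H_m(\tilde\varphi)$ by $\liminf_j\int_E H_m(\tilde\varphi_j)$ for a \emph{fixed} set $E=\{\psi<\varphi\}$. If you take $E=\{\tilde\varphi<\varphi\}$, you cannot then replace $E$ by $\{\tilde\varphi_j<\varphi_j\}$: since $\tilde\varphi\le\tilde\varphi_j$, the inclusion goes the wrong way. Indeed $H_m(\tilde\varphi_j)$ may charge $\{\tilde\varphi<\varphi\}\cap\{\tilde\varphi_j=\varphi_j\}$.

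The missing device, which the paper uses, is to apply the lemma with $\psi=\tilde\varphi_s$ for fixed $s$. For $j\ge s$ one has $\{\tilde\varphi_s<\varphi\}\subset\{\tilde\varphi_j<\varphi_j\}$, hence $H_m(\tilde\varphi)(\{\tilde\varphi_s<\varphi\})=0$. These sets increase to $\{\tilde\varphi<\varphi\}$ as $s\to\infty$, which gives the claim. Separately, your two cases leave $\partial\Omega\cap\{\tilde\varphi<0\}$ untreated. The same balayage handles it, since $\varphi\to 0$ at $\partial\Omega$ gives a uniform gap there.

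\textbf{A caveat shared with the paper.} In Step 1 you bound the family by $0$, i.e. you take the envelope over $\mathcal{SH}_m^-(X,\omega)$. With that constraint, the balayage on a ball $B\Subset X\setminus\bar\Omega$ is not justified. The maximal replacement need not stay $\le 0$, because there is no maximum principle for $\omega$-$m$-subharmonic functions. In fact the support statement then fails.

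Take $X=\mathbb{P}^1$, $\Omega=\{|z|<r\}$ and $\varphi=\epsilon(|z|^2-r^2)$ with $\epsilon>0$ small. The constrained envelope is radial, and $\tilde\varphi+\frac12\log(1+|z|^2)$ is the greatest convex minorant in $\log|z|$ of the obstacle. That minorant coincides with $\frac12\log(1+|z|^2)$ for $|z|$ large. Hence $\tilde\varphi\equiv 0$ near $\infty$, so $H_1(\tilde\varphi)=\omega$ on an open subset of $X\setminus\bar\Omega$.

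The argument works for the unconstrained envelope $\sup\{\psi\in\mathcal{SH}_m(X,\omega):\psi\le\varphi \text{ on } \Omega\}$, as defined after Theorem \ref{ExtFm}. In that case the upper bound in your Step 1 must come from compactness of normalized $\omega$-$m$-subharmonic functions, not from $\psi\le 0$.
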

\begin{proof}
{\bf Step 1.} Suppose that $\varphi \in \mathcal{E}_0^m(\Omega, \omega)\cap C^0(\bar{\Omega})$. Then $D=[X\setminus \bar{\Omega}]\cup[\{\tilde{\varphi}<\varphi\}\cap\bar{\Omega}]$ is open. By Proposition \ref{DP} and balayage technique, it follows that $H_m(\tilde{\varphi})=0$ in $D$ 
establishing the inequality $\mathbf{1}_\Omega H_m(\tilde{\varphi})\leq \mathbf{1}_\Omega H_m(\varphi)$ in this region and the measure $H_m(\tilde{\varphi})$ is carried by the Borel set $\{x \in \bar{\Omega}; \tilde{\varphi}(x) = \varphi(x)\}$.
As in the local theory (see \cite{CH}), to extend this inequality to the coincidence set $A = \{z \in \Omega : \varphi(z) = \tilde{\varphi}(z)\}$, consider an arbitrary compact subset $K \subset \Omega\cap A$. Note that $K \subset \{\hat{\varphi} > \varphi- \varepsilon\}$ for sufficiently small $\varepsilon > 0$, then by Proposition \ref{demailly}, we derive the estimate:
$$\begin{array}{ll}
\displaystyle\int_K  H_m(\tilde{\varphi}) &= \displaystyle\int_K {\bf{1}}_{\{\hat{\varphi} > \varphi - \varepsilon\}}  H_m(\tilde{\varphi}) \\
&= \displaystyle\int_K {\bf{1}}_{\{\hat{\varphi} > \varphi - \varepsilon\}} H_m(\max\{\hat{\varphi}, \varphi - \varepsilon\}) \\
& \leq \displaystyle\int_K H_m(\max\{\hat{\varphi}, \varphi - \varepsilon\}).
\end{array}
$$
Since  $\max\{\hat{\varphi}, \varphi - \varepsilon\} \searrow \varphi$ as $\varepsilon \to 0$, then
$
H_m(\max\{\hat{\varphi}, \varphi - \varepsilon\})$ converges weakly to $H_m(\varphi).$
Since $\chi_K$ is upper semicontinuous, we may approximate it pointwise by a decreasing sequence of continuous functions $\{\phi_j\}_j$ with uniform upper bound. Applying Lebesgue's dominated convergence theorem yields the crucial estimate:
$$
\limsup_{\varepsilon \to 0} \int_{\Omega} \chi_K H_m\max\{\hat{\varphi}, \varphi - \varepsilon\}) 
\leq \inf_{j\in\mathbb{N}} \left( \lim_{\varepsilon \to 0} \int_{\Omega} \phi_j H_m(\max\{\hat{\varphi}, \varphi - \varepsilon\}) \right)
= \inf_{j\in\mathbb{N}} \int_{\Omega} \phi_j H_m(\varphi) 
= \int_{\Omega} \chi_K H_m(\varphi).
$$
combining this estimate with the previous arguments completes the proof of step 1.\\
{\bf Step 2.}   Assume now that $\varphi \in \mathcal{E}_m^0(\Omega,\omega)$. By Proposition \ref{ccv} there exists  a decreasing sequence $(\varphi_j)\subset \mathcal{E}_0^m(\Omega, \omega)\cap C^0(\bar{\Omega})$ that is converging pointwise to $\varphi$. The corresponding sequence of subextensions $(\tilde{\varphi}_j)$ decreases then to $\tilde{\varphi}\in \mathcal{SH}_m(X,\omega)\cap L^\infty(X)$. By step 1 and the continuity of the Hessian measure with respect to decreasing sequences, we have $\mathbf{1}_\Omega H_m(\tilde{\varphi})\leq \mathbf{1}_\Omega H_m(\varphi)$ in the sense of measures on $X$. Moreover, since the sequence $(\tilde{\varphi}_j)_j$ is uniformly bounded (\(\inf_X\tilde{\varphi}\leq\tilde{\varphi}_j\leq\sup_X\tilde{\varphi}_1\) ), then the sequence of hessian measures $\mu_j := H_m(\tilde{\varphi}_j)$ is uniformly controlled by the $m$-capacity.
By step 1 we have 
$
\int_{\{\tilde{\varphi}_j < \varphi_j\}} H_m(\tilde{\varphi}_j) = 0$ for all $j \in \mathbb{N}.$
 It follows from Lemma \ref{lemconv} that  for any $s \geq 0$:
$$
\int_{\{\tilde{\varphi}_s < \varphi\}} H_m(\tilde{\varphi}) 
\leq \liminf_{j\to\infty} \int_{\{\tilde{\varphi}_s < \varphi\}} H_m(\tilde{\varphi}_j)
\leq \liminf_{j\to\infty} \int_{\{\tilde{\varphi}_j < \varphi_j\}} H_m(\tilde{\varphi}_j) 
= 0,
$$
 Taking the limit as $s \to +\infty$ we get $supp H_m(\tilde{\varphi})\subset\overline{\{x \in \bar{\Omega}; \tilde{\varphi}(x) = \varphi(x)\}}$. But by the quasi-continuity and since \(H_m(\tilde{\varphi})\) does not charge $m$-polar sets, the measure $H_m(\tilde{\varphi})$ is carried by the Borel set $\{x \in \bar{\Omega}; \tilde{\varphi}(x) = \varphi(x)\}$.
\end{proof}
\begin{thm}\label{ExtFm}
Let $\Omega \subset X$ be a quasi-hyperconvex domain satisfying $\int_\Omega\omega^n<\int_X\omega$ and $\varphi \in \mathcal{F}_m(\Omega,\omega)$ such that $\mathrm{H}_{\Omega}^m(\varphi) \leqslant \int_X \omega^n$. Then there exists a function $\tilde{\varphi} \in \mathcal{SH}_m(X,\omega)$ such that $\tilde{\varphi} \leqslant \varphi$ on $\Omega$.
\end{thm}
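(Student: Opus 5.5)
The plan is to approximate $\varphi$ from above by functions in $\mathcal{E}^0_m(\Omega,\omega)$, subextend each of them with Lemma \ref{extE0}, and show that the decreasing limit of the subextensions is not identically $-\infty$. The whole difficulty lies in a uniform normalization that prevents the subextensions from escaping to $-\infty$.

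\textbf{Approximation.} Since $\varphi \in \mathcal{F}_m(\Omega,\omega)$, choose $(\varphi_j)\subset\mathcal{E}^0_m(\Omega,\omega)$ with $\varphi_j\searrow\varphi$. By Lemma \ref{fcomp},
\[
\int_\Omega H_m(\varphi_j)\nearrow \mathrm{H}^m_\Omega(\varphi)\le \int_X\omega^n .
\]
Hence each $\varphi_j$ satisfies the hypothesis of Lemma \ref{extE0}. This gives $\tilde\varphi_j\in\mathcal{SH}_m(X,\omega)\cap L^\infty(X)$ with $\tilde\varphi_j\le\varphi_j$ on $\Omega$. Moreover $H_m(\tilde\varphi_j)$ is carried by $\{\tilde\varphi_j=\varphi_j\}\subset\bar\Omega$ and satisfies $\mathbf 1_\Omega H_m(\tilde\varphi_j)\le \mathbf 1_\Omega H_m(\varphi_j)$. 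Since the subextension is monotone in the data, $(\tilde\varphi_j)$ decreases. Its limit $\psi$ is either $\omega$-$m$-subharmonic on $X$ or identically $-\infty$. If $\psi\not\equiv-\infty$, then $\tilde\varphi:=\psi$ satisfies $\tilde\varphi\le\varphi$ on $\Omega$ and we are done. So it remains to rule out $\sup_X\tilde\varphi_j\to-\infty$.

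\textbf{Excluding $-\infty$.} Suppose $M_j:=\sup_X\tilde\varphi_j\to-\infty$. By compactness of $\mathcal{SH}_m(X,\omega)$ normalized by the sup (Hartogs-type compactness, as for $\omega$-psh functions), the functions $\tilde\varphi_j-M_j$ have a subsequence converging in $L^1$. Since $X$ is compact and $\tilde\varphi_j$ is bounded, Stokes gives total mass
\[
\int_X H_m(\tilde\varphi_j)=\int_X\omega^n .
\]
This mass is carried by $\bar\Omega$ and split between $\Omega$ and $\partial\Omega$. On $\partial\Omega$, however, $\tilde\varphi_j\le\liminf\varphi_j=0$ holds only weakly, so I would rather exploit a pointwise comparison. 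The deficit
\[
\int_X\omega^n-\int_\Omega\omega^n>0
\]
is the key: I would compare $\tilde\varphi_j$ with the constant function on the open set $X\setminus\bar\Omega$, whose measure is positive. On that set $H_m(\tilde\varphi_j)=0$, i.e.\ $\tilde\varphi_j$ is maximal there.

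Concretely, I would try the following. Consider the set $\{\tilde\varphi_j<M_j-t\}$ and apply the comparison principle (Lemma \ref{PC} and Theorem \ref{pcf} adapted to $X$) between $\tilde\varphi_j$ and a function $u\in\mathcal{SH}_m(X,\omega)$ that is very negative on $\Omega$. The bound $\int_\Omega H_m(\varphi_j)\le\int_X\omega^n$, combined with the measure inequality on $\Omega$, forces $\int_{\partial\Omega} H_m(\tilde\varphi_j)$ to absorb whatever mass is left. The strict volume inequality should then produce a uniform lower bound: points of $X\setminus\bar\Omega$ where $\tilde\varphi_j$ stays above $-C$. Alternatively, and more simply, I would show that $\tilde\varphi_j\ge \tilde u_j$, where $\tilde u_j$ is a subextension built from a uniformly bounded quantity. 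For this I would use $\varphi_j\ge \varphi_j^{(k)}$ and Lemma \ref{fcomp}.

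The main obstacle is this uniform lower bound on $\sup_X\tilde\varphi_j$: everything else follows directly from Lemma \ref{extE0}. My first attempt at it is the mass-balance argument, contrasting $\int_X H_m(\tilde\varphi_j)=\int_X\omega^n$ with maximality on $X\setminus\bar\Omega$ and the volume gap $\int_\Omega\omega^n<\int_X\omega^n$.
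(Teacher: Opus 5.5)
Your reduction is sound. With $\varphi_j\in\mathcal{E}^0_m(\Omega,\omega)$ decreasing to $\varphi$, the bounded subextensions $\tilde\varphi_j$ decrease, and a bound $\sup_X\tilde\varphi_j\ge -C$ independent of $j$ suffices: the usc functions $\tilde\varphi_j$ attain their maxima at points $x_j$, and at a limit point $x$ one gets $\lim_k\tilde\varphi_k(x)\ge -C$. But that uniform bound is the whole content of the theorem, and the proposal never establishes it. The comparison with ``a function very negative on $\Omega$'', the ``subextension built from a uniformly bounded quantity'' and the mass balance are only announced.

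The mass balance cannot close the gap. The volume gap $\int_\Omega\omega^n<\int_X\omega^n$ plays no role here. Carried out correctly, the count gives the following: if $M_j:=\sup_X\tilde\varphi_j<0$ and $\varphi_j$ is continuous up to $\partial\Omega$, balayage kills $H_m(\tilde\varphi_j)$ near $\partial\Omega$ and on $X\setminus\bar\Omega$. Then Stokes and Lemma \ref{extE0} give $\int_X\omega^n\le\int_{\{\varphi_j\le M_j\}}H_m(\varphi_j)$. This is a contradiction only when $\int_\Omega H_m(\varphi_j)<\int_X\omega^n$. In the equality case $\mathrm{H}^m_\Omega(\varphi)=\int_X\omega^n$, which the hypothesis allows, it is compatible with $M_j\to-\infty$. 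For instance, take $(\mathbb{P}^n,\omega_{FS})$, $m=n$, $\Omega=B(0,R)$ and $g=\tfrac12\log\frac{|z|^2}{1+|z|^2}-\tfrac12\log\frac{R^2}{1+R^2}$, so that $(\omega_{FS}+dd^cg)^n=\delta_0$. The truncations $\max\{g,-j\}$ have Hessian mass exactly $\int_X\omega^n$, carried by $\{g\le -j\}$, so the count is satisfied with $M_j=-j$.

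The missing idea, which is what the paper does, is to build a global competitor with a $j$-independent normalization by solving a Hessian equation on $X$.

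Set $\mu_j:=\mathbf{1}_\Omega H_m(\varphi_j)+\varepsilon_j\omega^n$, with $\varepsilon_j\ge 0$ chosen so that $\mu_j(X)=\int_X\omega^n$. This is exactly where $\mathrm{H}^m_\Omega(\varphi)\le\int_X\omega^n$ enters, via Lemma \ref{fcomp}. Since $\mu_j$ charges no $m$-polar set, the existence theorem quoted from [CN, Theorem 1.3] gives $u_j\in\mathcal{SH}_m(X,\omega)$ with $H_m(u_j)=\mu_j$ and $\sup_X u_j=-1$.

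On $\Omega$ one has $H_m(u_j)\ge H_m(\varphi_j)$ and $\liminf_{z\to\partial\Omega}(\varphi_j-u_j)\ge 1$, so Theorem \ref{pcf} yields $u_j\le\varphi_j$ on $\Omega$. Hence $u_j$ is a competitor for $\tilde\varphi_j$, so $\sup_X\tilde\varphi_j\ge -1$ for every $j$, and your argument closes. The paper bypasses $\tilde\varphi_j$ altogether and takes $(\limsup_j u_j)^*$. This function is $\le\varphi$ on $\Omega$ and has maximum $-1$ by compactness of sup-normalized families.
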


\begin{proof}
Let $(\varphi_j)\subset \mathcal{E}_0^m(\Omega,\omega)$ be a decreasing sequence which converges to $\varphi$ on $\Omega$. By Lemma \ref{fcomp} we have
$$
\int_\Omega H_m(\varphi_j) \leq \mathrm{H}_{\Omega}^m(\varphi).
$$
Put $\mu_j=\mathbf{1}_\Omega H_m(\varphi_j) + \varepsilon_j \omega^n$ with $0\leq \varepsilon_j<1$ is chosen such that $\mu_j(X)=\int_X\omega^n$. Since $\mu_j$ does not charge $m$-polar sets, if follows from \cite[Theorem 1.3]{CN}, that there exists $u_j \in \mathcal{E}(X,\omega,m)$ such that $
H_m(u_j) = \mu_j$ and $\sup_X u_j = -1$. Let $v_j=\max\{u_j,\varphi_j\}$ on $\Omega$. It follows from Proposition \ref{demailly} that  $$H_m(v_j)\geq {\bf{1}}_{\{u_j>\varphi_j\}}H_m(u_j)+{\bf{1}}_{\{u_j\leq\varphi_j\}}H_m(\varphi_j)\geq H_m(\varphi_j).$$
Since $\liminf(\varphi_j-v_j)\geq 0$ on $\partial\Omega$, then by Theorem \ref{pcf} we get $\varphi_j\geq u_j$ on $\Omega$.  Take \(u := (\limsup_{j \to +\infty} u_j)^*\). Then \(u \in \mathcal{SH}_m(X,\omega)\) and satisfies $u \leqslant \varphi$ on $\Omega$ and $\max_X u = -1$.\end{proof}

It follows from the above theorem that given $\varphi \in \mathcal{F}_m(\Omega,\omega)$ such that $\mathrm{H}_\Omega^m(\varphi) \leqslant \int_X \omega^n$, the set $\{\psi \in \mathcal{SH}_m(X,\omega); \psi \leqslant \varphi \text{ on } \Omega\}$ is not empty. Hence the following function
$
\tilde{\varphi} = \tilde{\varphi}_\Omega := \sup\{\psi \in \mathcal{SH}_m(X,\omega); \psi \leqslant \varphi \text{ on } \Omega\}
$
is a well defined $\omega$-$m$-subharmonic function on $X$ and will be called the \textit{maximal subextension} of $\varphi$ from $\Omega$ to $X$.

\begin{pro}
Let $\varphi \in \mathcal{F}_m(\Omega, \omega)$ and consider a decreasing approximating sequence $(\varphi_j) \subset \mathcal{E}_0^m(\Omega, \omega)$ converging to $\varphi$. The corresponding sequence of maximal subextensions $(\widetilde{\varphi}_j)$ exhibits the following properties:
\begin{enumerate}
    \item It decreases monotonically to $\widetilde{\varphi}$ throughout $X$
    \item Any  Borel measure $\mu$ on $X$ which is a limit point of the sequence of measures $(H_m(\widetilde{\varphi}_j))$ satisfies the domination
   $
    \mathbf{1}_\Omega \mu \leq \mathbf{1}_\Omega H_m(\varphi)
    $
    in the sense of Radon measures on $X$.
\end{enumerate}
\end{pro}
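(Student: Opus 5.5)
We prove (1) by monotonicity and an envelope argument, and (2) by passing to the limit in Lemma \ref{extE0}.

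\textbf{Property (1).} Since $\varphi_{j+1}\le \varphi_j$ on $\Omega$, every competitor for $\widetilde{\varphi}_{j+1}$ is a competitor for $\widetilde{\varphi}_j$. Hence $\widetilde{\varphi}_{j+1}\le\widetilde{\varphi}_j$, and likewise $\widetilde{\varphi}\le\widetilde{\varphi}_j$ because $\varphi\le\varphi_j$. To apply Lemma \ref{extE0} we need the mass bound. By Lemma \ref{fcomp}, $\int_\Omega H_m(\varphi_j)\le \mathrm{H}^m_\Omega(\varphi)\le\int_X\omega^n$. So each $\widetilde{\varphi}_j$ is a bounded $\omega$-$m$-subharmonic function on $X$.

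Set $\psi:=\lim_j\widetilde{\varphi}_j$. As a decreasing limit of $\omega$-$m$-sh functions, $\psi$ is either $\omega$-$m$-sh or $\equiv-\infty$. Theorem \ref{ExtFm} rules out the second case: the function $u$ constructed there satisfies $u\le\varphi\le\varphi_j$ on $\Omega$, so $u\le\widetilde{\varphi}_j$ for every $j$. Moreover $\psi\le\lim_j\varphi_j=\varphi$ on $\Omega$, so $\psi$ is a competitor for $\widetilde{\varphi}$, giving $\psi\le\widetilde{\varphi}$. Combined with $\widetilde{\varphi}\le\widetilde{\varphi}_j$ for all $j$, this yields $\psi=\widetilde{\varphi}$. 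One small point to check is that competitors are taken nonpositive (the class $\mathcal{SH}^-_m$) or normalized consistently. This does not affect the argument, since $\varphi\le 0$ and one can replace a competitor $u$ by $\min(u,0)$-type comparisons, or simply note that $\sup_X\widetilde{\varphi}_j$ is controlled.

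\textbf{Property (2).} Let $\mu$ be a weak limit of a subsequence $H_m(\widetilde{\varphi}_{j_k})$. Lemma \ref{extE0} gives $\mathbf 1_\Omega H_m(\widetilde{\varphi}_j)\le \mathbf 1_\Omega H_m(\varphi_j)$ for every $j$. Fix $\theta\in C_c(\Omega)$ with $\theta\ge0$. Then
\[
\int\theta\, H_m(\widetilde{\varphi}_{j_k})\le\int\theta\, H_m(\varphi_{j_k}).
\]
The left side tends to $\int\theta\,d\mu$, since $\theta$ is continuous with compact support in $\Omega\subset X$.

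For the right side we need $H_m(\varphi_j)\to H_m(\varphi)$ weakly on $\Omega$ for $\varphi\in\mathcal F_m(\Omega,\omega)$. I would obtain this as in Step 1 of Theorem \ref{thcv}. The measures $H_m(\varphi_j)$ have uniformly bounded total mass $\le \mathrm{H}^m_\Omega(\varphi)$. A weak limit exists along subsequences and is independent of the approximating sequence, by the same Cauchy-type argument comparing with truncations $\max(\varphi_j,-k)$. Alternatively, since $\mathcal F_m\subset\mathcal D_m$ in the cited framework, this limit is by definition $H_m(\varphi)$.

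We then get $\int\theta\,d\mu\le\int\theta\, H_m(\varphi)$ for all nonnegative $\theta\in C_c(\Omega)$. Hence $\mathbf 1_\Omega\mu\le\mathbf 1_\Omega H_m(\varphi)$ as Radon measures on the open set $\Omega$, which is the claimed domination on $X$.

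\textbf{Main obstacle.} The main difficulty is justifying the convergence $H_m(\varphi_j)\to H_m(\varphi)$ on $\Omega$ in the class $\mathcal F_m$, without a weighted energy bound. For $\chi(t)=t$ the estimate $\mu_j(\{\varphi_j<-k\})\le C/k$ needs $\int-\varphi_jH_m(\varphi_j)$ bounded, which is not available here. I would first try to bypass this via the total-mass control together with the comparison estimates of Lemma \ref{PC} applied to $\max(\varphi_j,-k)$, mimicking Cegrell's proof that $\mathcal F\subset$ the domain of definition of the operator. If that fails, I would take $H_m(\varphi)$ on $\mathcal F_m$ to be defined as this (sequence-independent) limit, as in the local theory.
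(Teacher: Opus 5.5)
Your proposal is correct and follows the paper's proof essentially step for step. For (1), you use monotonicity plus the competitor argument: $\psi:=\lim_j\widetilde{\varphi}_j\le\varphi$ on $\Omega$, which forces $\psi=\widetilde{\varphi}$. For (2), you apply Lemma \ref{extE0} for each $j$ and pass to the limit using $H_m(\varphi_j)\to H_m(\varphi)$. Your extra checks are genuine additions the paper leaves implicit: the mass bound via Lemma \ref{fcomp}, and that $\psi\not\equiv-\infty$ via Theorem \ref{ExtFm}. The step you flag as the main obstacle is the one the paper also takes for granted, by appealing to ``monotone convergence of Hessian measures under decreasing sequences'' (that is, treating $\varphi$ as lying in $\mathcal{D}_m(\Omega,\omega)$). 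You are also right that the truncation argument of Theorem \ref{thcv} cannot supply it without a weighted energy bound.
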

\begin{proof}
The inequality $\widetilde{\varphi} \leq \widetilde{\varphi}_j$ and monotonicity of $(\widetilde{\varphi}_j)$ are immediate from construction. Define $\psi := \lim_{j\to\infty} \widetilde{\varphi}_j$. Observe that $\psi \in \mathcal{SH}_m(X,\omega)$ and $\psi \geq \widetilde{\varphi}$. Crucially, the containment $\psi \leq \widetilde{\varphi}_j \leq \varphi_j$ on $\Omega$ for all $j$ implies  that $\psi \leq \varphi$ on $\Omega$ through passage to the limit. Thus $\psi$ constitutes a subextension of $\varphi$ to $X$, necessitating $\psi \leq \widetilde{\varphi}$. We conclude that $\psi \equiv \widetilde{\varphi}$ on $X$.
By Lemma \ref{extE0}, we have $
\mathbf{1}_\Omega H_m(\widetilde{\varphi}_j) \leq \mathbf{1}_\Omega H_m(\varphi_j)
$
for each $j$. Monotone convergence of Hessian measures under decreasing sequences yields the desired inequality in the statement.
\end{proof}
Provided the given function has finite weighted Hessian energy, the maximal subextension is shown to admit a well-defined Hessian measure:
\begin{thm}\label{extEchi}
Let $\Omega \subset X$ be a quasi-$m$-hyperconvex domain with $\int_\Omega \omega^n < \int_X \omega^n$, $\chi : \mathbb{R} \mapsto \mathbb{R}$ be a convex weight function and  $\varphi \in \mathcal{E}_\chi^m(\Omega, \omega)$ satisfy 
$
\int_\Omega H_m(\varphi) \leq \int_X \omega^n.
$
Then the maximal $\omega$-$m$-subharmonic subextension $\tilde{\varphi}$ of $\varphi$ to $X$ exists and satisfies:
\begin{enumerate}
\item[(i)] $\tilde{\varphi} \in \mathcal{E}_\chi^m(X, \omega)$, and 
$
\int_X -\chi(\tilde{\varphi})  H_m(\tilde{\varphi}) \leq \int_\Omega -\chi(\varphi)  H_m(\varphi).
$
\item[(ii)] The Hessian measure satisfies 
$
\mathbf{1}_\Omega H_m(\tilde{\varphi}) \leq \mathbf{1}_\Omega H_m(\varphi)
$
in the sense of Borel measures on $X$.
\item[(iii)] The measure $H_m(\tilde{\varphi})$ is supported in the set 
$
\operatorname{supp} H_m(\tilde{\varphi}) \subset \{ \tilde{\varphi} = \varphi \} \cup \partial \Omega.
$
\end{enumerate}
\end{thm}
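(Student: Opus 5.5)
Reduce to Lemma \ref{extE0} by approximation. Since $\varphi\in\mathcal{E}_\chi^m(\Omega,\omega)$, pick a decreasing sequence $(\varphi_j)\subset\mathcal{E}^0_m(\Omega,\omega)$ with $\varphi_j\searrow\varphi$ and $\sup_j\int_\Omega-\chi(\varphi_j)H_m(\varphi_j)<+\infty$. We may also arrange $\int_\Omega H_m(\varphi_j)\le\int_X\omega^n$: since $\chi(t)\le t$, finite weighted energy forces finite total mass, so $\varphi\in\mathcal{F}_m(\Omega,\omega)$ with $\mathrm{H}^m_\Omega(\varphi)=\int_\Omega H_m(\varphi)$ by Theorem \ref{thcv} and Lemma \ref{fcomp}. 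If the bound is tight, we would replace $\varphi_j$ by $(1-\delta_j)\varphi_j$ with $\delta_j\searrow 0$; this scaling does not increase $-\chi$ and only lowers the mass by $(1-\delta_j)^m$. By Theorem \ref{ExtFm}, $\tilde\varphi$ exists, and by the preceding Proposition the subextensions $\tilde\varphi_j$ (bounded, by Lemma \ref{extE0}) decrease to $\tilde\varphi$.

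For (i), we want a uniform energy bound for $\tilde\varphi_j$ on $X$. By Lemma \ref{extE0}, $H_m(\tilde\varphi_j)$ is carried by $\{\tilde\varphi_j=\varphi_j\}\subset\bar\Omega$, and there $\mathbf{1}_\Omega H_m(\tilde\varphi_j)\le\mathbf{1}_\Omega H_m(\varphi_j)$. Hence
\[
\int_X-\chi(\tilde\varphi_j)H_m(\tilde\varphi_j)=\int_{\Omega\cap\{\tilde\varphi_j=\varphi_j\}}-\chi(\varphi_j)H_m(\tilde\varphi_j)+\int_{\partial\Omega}-\chi(\tilde\varphi_j)H_m(\tilde\varphi_j),
\]
and the first term is at most $\int_\Omega-\chi(\varphi_j)H_m(\varphi_j)$. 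The boundary term is the delicate point. On $\partial\Omega$ we have $\tilde\varphi_j\le\limsup\varphi_j=0$ by upper semicontinuity, and we need $\tilde\varphi_j=0$ on the part of $\partial\Omega$ charged by $H_m(\tilde\varphi_j)$. On the contact set in $\bar\Omega$, read through boundary values, $\varphi_j$ equals $0$ on $\partial\Omega$, so $-\chi(\tilde\varphi_j)=0$ $H_m(\tilde\varphi_j)$-a.e. on $\partial\Omega$ (recall $\chi(0)=0$). This gives
\[
\int_X-\chi(\tilde\varphi_j)H_m(\tilde\varphi_j)\le\int_\Omega-\chi(\varphi_j)H_m(\varphi_j).
\]
Next we place $\tilde\varphi$ in the global class $\mathcal{E}^m_\chi(X,\omega)$, meaning decreasing limits of bounded $\omega$-$m$-sh functions with bounded $\chi$-energy (after normalizing $\sup\le 0$). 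The global analogue of the preceding Proposition, \`a la \cite{GZ}, then gives $\tilde\varphi\in\mathcal{E}_\chi^m(X,\omega)$. Lower semicontinuity of the energy along decreasing sequences, together with the convergence $\int_\Omega-\chi(\varphi_j)H_m(\varphi_j)\to\int_\Omega-\chi(\varphi)H_m(\varphi)$, yields the inequality in (i). For the limit step on the $X$ side we would use Lemma \ref{lemconv} with truncations $\max(\tilde\varphi_j,-k)$.

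For (ii), $H_m(\tilde\varphi_j)\to H_m(\tilde\varphi)$ weakly by continuity along decreasing sequences in $\mathcal{E}_\chi^m(X,\omega)$, and $H_m(\varphi_j)\to H_m(\varphi)$ on $\Omega$ by Theorem \ref{thcv}. The preceding Proposition (item 2) then gives $\mathbf{1}_\Omega H_m(\tilde\varphi)\le\mathbf{1}_\Omega H_m(\varphi)$, testing against nonnegative $\theta\in C_c(\Omega)$ and using inner regularity.

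For (iii), follow Step 2 of Lemma \ref{extE0}. For fixed $s$, the set $\{\tilde\varphi_s<\varphi\}$ is contained in $\{\tilde\varphi_j<\varphi_j\}$ for $j\ge s$. To apply Lemma \ref{lemconv}, the measures $H_m(\tilde\varphi_j)$ are no longer uniformly bounded, so capacity domination is lost. We would instead work with truncations $\max(\tilde\varphi_j,-k)$, whose Hessian measures agree with $H_m(\tilde\varphi_j)$ on $\{\tilde\varphi_j>-k\}$. The mass on $\{\tilde\varphi_j\le -k\}$ is at most $C/(-\chi(-k))$ by the uniform energy bound of (i). This gives $H_m(\tilde\varphi)(\{\tilde\varphi_s<\varphi\})=0$; letting $s\to\infty$, and using that $H_m(\tilde\varphi)$ puts no mass on $m$-polar sets (finite energy), we obtain the support statement. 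Off $\bar\Omega$, $H_m(\tilde\varphi)$ vanishes as the limit of measures vanishing there.

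The main obstacle is the boundary contribution in (i), together with the uniform non-pluripolarity needed for the capacity argument in (iii). For the boundary term, the first thing to try is to replace $\varphi_j$ by continuous approximants (Proposition \ref{ccv}), so that the contact set on $\partial\Omega$ has $\tilde\varphi_j=0$ exactly.
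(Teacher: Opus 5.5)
Your proposal is correct and follows the paper's skeleton. You approximate $\varphi$ by $\varphi_j\in\mathcal{E}^0_m(\Omega,\omega)$ and subextend each one via Lemma~\ref{extE0}. You use $\chi(0)=0$ on the boundary part of the contact set to get $-\chi(\tilde\varphi_j)H_m(\tilde\varphi_j)\le-\chi(\varphi_j)\mathbf{1}_\Omega H_m(\varphi_j)$. You then pass to the limit with the global convergence theory on $X$, as the paper does with \cite{CN}.

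The real difference is in (iii). The paper secures the hypothesis of Lemma~\ref{lemconv} by weighting the truncations $\max\{\varphi,-j\}$ with $1+\max\{\varphi/t,-1\}$, in the spirit of \cite{CKZ2}. That device only uses that $H_m(\varphi)$ puts no mass on $\{\varphi=-\infty\}$, but it yields a statement about $H_m(\varphi)$ that must then be transferred to $H_m(\tilde\varphi)$. You instead verify uniform capacity domination directly for the measures $H_m(\tilde\varphi_j)$. On $\{\tilde\varphi_j>-k\}$ they agree with $H_m(\max\{\tilde\varphi_j,-k\})\le k^m\,\mathrm{cap}_{\omega,m}$, and $H_m(\tilde\varphi_j)(\{\tilde\varphi_j\le-k\})\le C/(-\chi(-k))$ by the energy bound from (i). Your route is tailored to $\mathcal{E}^m_\chi$ and makes the use of Lemma~\ref{lemconv} transparent. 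The paper's weighting is what survives when no energy bound is available, as in the $\mathcal{F}_m$ setting of Section 4.

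In (i), two points in your favour:
\begin{itemize}
\item Your lower-semicontinuity argument gives the inequality for $\int_X$, including any mass on $\partial\Omega$. The paper only writes the inequality for the measures restricted to $\Omega$.
\item Your suggestion to take the $\varphi_j$ continuous via Proposition~\ref{ccv} is the right way to justify the paper's claim $\chi(\tilde\varphi_j)=\chi(0)=0$ on $\partial\Omega\cap A$. With continuous $\varphi_j$, Step 1 of Lemma~\ref{extE0} gives $\tilde\varphi_j=0$ exactly on the charged part of $\partial\Omega$. Uniform energy bounds along such a sequence come from Step 2 of the proof of Theorem~\ref{thcv}.
\end{itemize}

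One slip: drop the rescaling $(1-\delta_j)\varphi_j$. It is unnecessary, because Lemma~\ref{fcomp} gives $\int_\Omega H_m(\varphi_j)\le \mathrm{H}^m_\Omega(\varphi)=\int_\Omega H_m(\varphi)\le\int_X\omega^n$ for every $j$, and Lemma~\ref{extE0} allows equality. Its justification is also wrong in the $\omega$-setting. Since $\omega+dd^c\bigl((1-\delta)\varphi_j\bigr)=\delta\omega+(1-\delta)(\omega+dd^c\varphi_j)$, one has $H_m\bigl((1-\delta)\varphi_j\bigr)\ge(1-\delta)^mH_m(\varphi_j)+\delta^m\omega^n$. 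So the mass is not multiplied by $(1-\delta)^m$.
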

\begin{proof}
Let $(\varphi_j)$ be a sequence in $\mathcal{E}_0^m(\Omega, \omega)$ decreasing to $\varphi$. For each $j$, denote by $\tilde{\varphi}_j$ the maximal subextension of $\varphi_j$ from $\Omega$ to $X$. By Lemma \ref{extE0}, each $\tilde{\varphi}_j \in \mathcal{SH}_m(X, \omega) \cap L^\infty(X)$,  the measure $H_m(\tilde{\varphi}_j)$ is supported on the contact set $A=\{x\in\bar{\Omega};\;\, \tilde{\varphi}_j(x) = \varphi_j(x)\}$ and $
\mathbf{1}_\Omega H_m(\tilde{\varphi}_j) \leq \mathbf{1}_\Omega H_m(\varphi_j)
$. Since \(\chi(\tilde{\varphi}_j)=\chi(\varphi_j)=\chi(0)=0\) on \(\partial\Omega\cap A\), then for every $j$,
$$
-\chi(\tilde{\varphi}_j) H_m(\tilde{\varphi}_j) \leq  -\chi(\varphi_j)\mathbf{1}_\Omega H_m(\varphi_j),
$$
in the sense of measures. This implies that there exists a constant $C > 0$ independent of $j$ such that:
$$
\int_X -\chi(\tilde{\varphi}_j) H_m(\tilde{\varphi}_j) \leq \int_\Omega -\chi(\varphi_j) H_m(\varphi_j) \leq C.
$$
Since $\tilde{\varphi}j \searrow \tilde{\varphi}$ on $X$, the convergence theorem in \cite{CN} implies that $\tilde{\varphi} \in \mathcal{E}_\chi^m(X, \omega)$, and 
$$
 -{\chi(\tilde{\varphi})}  \mathbf{1}_\Omega H_m(\tilde{\varphi}) \leq -{\chi(\varphi})\mathbf{1}_\Omega  H_m(\varphi).
$$
For (iii), consider the truncations $\varphi_j := \max\{ \varphi, -j \}$. Fix $t > 0$ and note that the function $(1 + \max\{ \varphi/t, -1 \})$ vanishes on $\{ \varphi \leq -t \}$ and is bounded above by $1$. Moreover, for $j > t$, we have $\{ \varphi > -t \} \subset \{ \varphi > -j \}$ and 
$$
\mathbf{1}_{\{ \varphi > -j \}} H_m(\varphi_j) \nearrow \mathbf{1}_{\{ \varphi > -\infty \}} H_m(\varphi),
$$
as shown in \cite{CN}. Hence, the sequence of measures 
$
\mu_j := \big(1 + \max\{\varphi/t, -1\}\big) H_m(\varphi_j)
$
is increasing and satisfies 
$
\mu_j \leq \mathbf{1}_{\{ \varphi > -\infty \}} H_m(\varphi).
$
Then, for fixed $s$ and $t$, by Lemma \ref{lemconv} we have
\begin{align*}
\int_{\{ \tilde{\varphi}_s < \varphi \}} \big(1 + \max\{\varphi/t, -1\}\big) H_m(\varphi) 
&\leq \liminf_j \int_{\{ \tilde{\varphi}_s < \varphi \}} \big(1 + \max\{\varphi/t, -1\}\big) H_m(\tilde{\varphi}_j) \\
&\leq \liminf_j \int_{\{ \tilde{\varphi}_j < \varphi_j \}} H_m(\tilde{\varphi}_j) = 0.
\end{align*}
Letting $t \to \infty$, we observe that $\big(1 + \max\{\varphi/t, -1\}\big) \nearrow \mathbf{1}_{\{ \varphi > -\infty \}}$, so we obtain:
$
\int_{\{ \tilde{\varphi}_s < \varphi \}} H_m(\tilde{\varphi}) = 0.
$
Finally, letting $s \to \infty$, and by quasi-continuity together with the fact that \(H_m(\tilde{\varphi})\) does not charge $m$-polar set, we obtain the desired conclusion:
$
H_m(\tilde{\varphi})$ is supported on $\{ \tilde{\varphi} = \varphi \} \cup \partial \Omega.$
\end{proof}
\begin{rem}
Unlike the local case, the Hessian measure \( H_m(\tilde{\varphi}) \) can charge the boundary of \(\Omega\). To see this, choose \(\varphi\) such that
\(
\int_\Omega H_m(\varphi) < \int_X \omega^n = \int_X H_m(\tilde{\varphi}),
\)
where the second equality follows from the condition \(\tilde{\varphi} \in \mathcal{E}_m(X,\omega)\). Then \( H_m(\tilde{\varphi}) \) must necessarily give positive mass to \(\partial\Omega\).
Indeed, from the inequality \(\mathbf{1}_\Omega H_m(\tilde{\varphi}) \le \mathbf{1}_\Omega H_m(\varphi)\), we can decompose
\(
H_m(\tilde{\varphi}) = f \cdot \mathbf{1}_\Omega H_m(\varphi) + \nu,
\)
with \(0 \le f \le 1\) and \(\operatorname{supp}(\nu) \subset \partial\Omega\). Consequently,
\(
\nu(\partial\Omega)= \int_X H_m(\tilde{\varphi}) - \int_\Omega f \cdot H_m(\varphi) \geq \int_X H_m(\tilde{\varphi}) - \int_\Omega H_m(\varphi) > 0.
\)
 \end{rem}
\section{Subextension in $\mathbb{C}^n$}
We now consider subextensions from a $m$-hyperconvex domain to all of $\mathbb{C}^n$, viewed as an open subset of the complex projective space $\mathbb{P}^n$. This induces a correspondence between $m$-subharmonic functions of logarithmic growth in $\mathbb{C}^n$ and $\omega_{FS}$-$m$-subharmonic functions on $\mathbb{P}^n$, where $\omega_{FS}$ denotes the normalized Fubini-Study form.
In homogeneous coordinates $\zeta = [\zeta_0:\zeta_1:\cdots:\zeta_n]$, the Fubini-Study form is given by
\[
\omega_{FS} = dd^c \log |\zeta|,
\]
where $d^c = \frac{i}{2\pi}(\bar{\partial} - \partial)$ and $|\zeta| = \sqrt{|\zeta_0|^2 + \cdots + |\zeta_n|^2}$.
On the affine chart $\{\zeta_0 \neq 0\} \cong \mathbb{C}^n$ with coordinates $z_j = \zeta_j/\zeta_0$ for $j=1,\dots,n$, we have
\(
\omega_{FS} = dd^c \rho(z)\), where \( \rho(z) = \frac{1}{2} \log(1 + \norm{z}^2).
\)
We define the $m$-Lelong class on $\mathbb{C}^n$ as
\[
\mathcal{L}_m(\mathbb{C}^n) := \left\{ u \in \mathcal{SH}_m(\mathbb{C}^n) \;\middle|\; \sup_{z \in \mathbb{C}^n} \bigl( u(z) - \log^+ \norm{z} \bigr) < +\infty \right\},
\]
where $\log^+ \norm{z} = \max\{0, \log \norm{z}\}$.
\begin{lem}\label{correspondence}
For every $u \in \mathcal{L}_m(\mathbb{C}^n)$, the function defined on $\mathbb{C}^n$ by
\(
\phi(z) = u(z) - \frac{1}{2} \log(1 + \norm{z}^2)
\)
extends uniquely to an $\omega_{FS}$-$m$-subharmonic function on $\mathbb{P}^n$, still denoted by $\phi$. 
Conversely, for every $\phi \in \mathcal{SH}_m(\mathbb{P}^n,\omega_{FS})$, the function
\(
u(z) = \phi(z) + \frac{1}{2} \log(1 + \norm{z}^2)
\)
belongs to $\mathcal{L}_m(\mathbb{C}^n)$.
Moreover, the maps $u \mapsto \phi$ and $\phi \mapsto u$ are inverse to each other, yielding a bijection
\(
\mathcal{L}_m(\mathbb{C}^n) \longleftrightarrow \SH_m(\Pj^n,\omega_{FS})
\)
that satisfies $\omega_{FS} + dd^c \phi = dd^c u$ on $\mathbb{C}^n$.
\end{lem}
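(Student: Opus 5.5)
The plan is to reduce everything to the classical Lelong-class / $\omega_{FS}$-psh correspondence, now with $m$-subharmonicity in place of plurisubharmonicity. On the affine chart $\mathbb{C}^n=\{\zeta_0\neq 0\}$ we have $\omega_{FS}=dd^c\rho$ with $\rho=\frac12\log(1+\norm{z}^2)$. By the definition of $\omega$-$m$-subharmonicity (where $u+\rho$ should be $m$-sh whenever $\omega=dd^c\rho$ locally), a function $\phi$ on $\mathbb{C}^n$ is $\omega_{FS}$-$m$-sh exactly when $\phi+\rho$ is $m$-sh on $\mathbb{C}^n$. The $m$-sh notion is taken with respect to the reference form. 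In $\mathbb{C}^n$ this is the form used to define $\mathcal{SH}_m(\mathbb{C}^n)$; we take it to be $\omega_{FS}$ on the chart, consistent with the paper's definitions, so that the identity is tautological. The identity $\omega_{FS}+dd^c\phi=dd^c(\phi+\rho)=dd^cu$ then holds on $\mathbb{C}^n$ by direct computation. Thus the whole content is (a) the growth condition and (b) extension across the hyperplane at infinity $H_\infty=\{\zeta_0=0\}$.

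For the direction $\phi\mapsto u$: $\phi$ is usc on the compact $\mathbb{P}^n$, hence bounded above by some $C$. Then $u=\phi+\rho\le C+\frac12\log(1+\norm{z}^2)\le C+\frac12\log2+\log^+\norm{z}$, so $u\in\mathcal{L}_m(\mathbb{C}^n)$. The function $u$ is $m$-sh by the remark above.

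For the direction $u\mapsto\phi$: first, $\phi=u-\rho$ is $\omega_{FS}$-$m$-sh on $\mathbb{C}^n$ and bounded above near $H_\infty$, because $u-\log^+\norm{z}\le C$ and $\rho\ge\log^+\norm{z}$. Second, we extend across $H_\infty$. Near a point of $H_\infty$ we use a chart $\zeta_k\neq0$ with a local potential $\rho_k=\frac12\log(|\zeta|^2/|\zeta_k|^2)$. In that chart $\phi+\rho_k$ is $m$-sh off the pluripolar hypersurface $\{\zeta_0=0\}$ and is locally bounded above. By the removable singularity theorem for $m$-sh functions bounded above across closed pluripolar (in particular $m$-polar, here analytic) sets, it extends uniquely as $(\phi+\rho_k)^*$, namely $\limsup$ over points off the hypersurface. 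The extensions in overlapping charts agree, since they are determined by values on the dense complement; hence $\phi$ extends uniquely to $\mathbb{P}^n$. Uniqueness in general holds because $H_\infty$ has measure zero and $\omega$-$m$-sh functions are determined a.e. by condition (3) of the definition.

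Finally, the two maps are inverse to each other on $\mathbb{C}^n$ by construction. On $\mathbb{P}^n$ the extension is unique, so the composition returns the original $\phi$, and we obtain the bijection.

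The main obstacle is the removable singularity step for $m$-subharmonic functions across $H_\infty$. The intended route is to cite the $m$-sh analogue of the Grauert–Remmert theorem, valid since $m$-polar sets contain pluripolar ones. If a self-contained argument is wanted, the plan is to approximate $\phi+\rho_k$ by $\max\{\phi+\rho_k,\ N^{-1}\log|\zeta_0/\zeta_k|+c\}$-type modifications, or to use $\varepsilon\log|\zeta_0/\zeta_k|$ perturbations. One then verifies the positivity condition (2) of the definition in the distributional sense by letting $\varepsilon\to0$; this uses that $\log|h|$ is psh, hence $m$-sh, and that $m$-sh functions are stable under decreasing limits and usc regularized $\limsup$.
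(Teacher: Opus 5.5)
\textbf{Verdict: gap.} Your proof works only for a changed statement. In the paper, $\mathcal{SH}_m(\mathbb{C}^n)$, and hence $\mathcal{L}_m(\mathbb{C}^n)$, means $m$-subharmonic with respect to the flat form $\beta=dd^c\norm{z}^2$. The paper's proof starts from $(dd^cu)^k\wedge\beta^{n-k}\ge 0$. Section 4 then uses $(dd^cu)^m\wedge\beta^{n-m}$ for $u\in\mathcal{L}_m(\mathbb{C}^n)$ and compares such $u$ with the Euclidean classes $\mathcal{E}_{m,0}(\Omega)$ and $\mathcal{F}_m(\Omega)$. You instead declare that $m$-subharmonicity on $\mathbb{C}^n$ is taken with respect to $\omega_{FS}$. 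That turns ``$u$ is $m$-sh iff $u-\rho$ is $\omega_{FS}$-$m$-sh'' into a tautology. Under the paper's convention this equivalence is the whole content of the lemma, and your proposal gives no argument for it. Your other steps are fine: the growth bound, the converse via $\sup_{\Pj^n}\phi<+\infty$, uniqueness, and the extension across $H_\infty$. On the extension you are more careful than the paper, which only takes the $\limsup$ without justifying that the result is $\omega_{FS}$-$m$-sh.

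\textbf{The missing step cannot be supplied for $m<n$.} The paper's own justification (that $C_1\beta\le\omega_{FS}\le C_2\beta$ on compacts) is not valid either. The cone of $m$-positive real $(1,1)$-forms depends pointwise on the reference form, and comparability does not preserve it unless the two forms are pointwise proportional. $\omega_{FS}$ is not proportional to $\beta$ away from $0$. At $p=(t,0,\dots,0)$ its eigenvalues relative to $\beta$ are, up to a constant, $(1+t^2)^{-2}$ in the $z_1$-direction and $(1+t^2)^{-1}$ in the others.

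Here is a counterexample with $n=2$, $m=1$. Take $w=\frac12\log(1+|z_2|^2)$, $\eta=\epsilon(1-|z_1-t|^2)-\sqrt{\epsilon}\,|z_2|^2$ with $\epsilon>0$ small, and $u=\max\{w,w+\eta\}$.
\begin{itemize}
\item $w+\eta$ is Euclidean subharmonic near the compact set $\{\eta\ge 0\}\subset\{|z_2|^2\le\sqrt{\epsilon}\}$, so $u$ is subharmonic.
\item Also $u\le\log^+\norm{z}+C$, so $u\in\mathcal{L}_1(\mathbb{C}^2)$ in the paper's sense.
\item Near $p$ we have $u=w+\eta$, and $dd^cu$ has eigenvalues $(-\epsilon,\tfrac12-\sqrt{\epsilon})$ at $p$.
\item Hence $dd^cu\wedge\omega_{FS}$ at $p$ is a positive multiple of $-\epsilon(1+t^2)^{-1}+(\tfrac12-\sqrt{\epsilon})(1+t^2)^{-2}$, which is negative for $t$ large.
\end{itemize}
So $u-\rho\notin\mathcal{SH}_1(\Pj^2,\omega_{FS})$.

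With the paper's convention the lemma is therefore false in general for $m<n$. It is fine for $m=n$, where the reference form plays no role. Your $\omega_{FS}$-convention is what makes the lemma true. If you keep it, say plainly that you are proving a modified statement, not one ``consistent with the paper's definitions''. Also note that this version no longer matches the Euclidean Hessian measures $(dd^cu)^m\wedge\beta^{n-m}$ used in the rest of Section 4.
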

\begin{proof}
Let $u \in \mathcal{L}_m(\mathbb{C}^n)$ and define $\phi$ as above. On $\mathbb{C}^n$ we have
\(
\omega_{FS} + dd^c \phi = dd^c \rho + dd^c (u - \rho) = dd^c u.
\)
Since $u$ is $m$-subharmonic, for each $1 \le k \le m$ the current $(dd^c u)^k \wedge \beta^{n-k}$ is positive, where $\beta = dd^c \norm{z}^2$ is the standard flat K\"ahler form.  For every compact subset $K \subset \mathbb{C}^n$, there exist constants $C_1, C_2 > 0$ such that
\(
C_1 \beta \le \omega_{FS} \le C_2 \beta\) on \(K.
\) Then the positivity of $(dd^c u)^k \wedge \beta^{n-k}$ implies that $(\omega_{FS} + dd^c \phi)^k \wedge \omega_{FS}^{n-k} \ge 0$ for $1 \le k \le m$, hence $\phi$ is $\omega_{FS}$-$m$-subharmonic on $\mathbb{C}^n$.
The growth condition on $u$ gives a constant $C>0$ such that $u(z) \le \log^+ \norm{z} + C$. Consequently,
\[
\phi(z) \le \log^+ \norm{z} + C - \frac{1}{2} \log(1 + \norm{z}^2) = O(1) \quad \text{as } \norm{z} \to \infty,
\]
so $\phi$ is locally bounded above near the hyperplane at infinity $H_\infty = \{\zeta_0 = 0\}$. Hence $\phi$ extends  to an $\omega_{FS}$-$m$-subharmonic function on $\mathbb{P}^n$ by taking \(\phi(z)=\limsup_{x \to z} \left( u(x) - \rho(x) \right)\) for \(z \in H_\infty\).
Conversely, given $\phi \in \mathcal{SH}_m(\mathbb{P}^n,\omega_{FS})$, set $u(z) = \phi(z) + \frac{1}{2} \log(1 + \norm{z}^2)$. Then $(dd^c u)^k \wedge \beta^{n-k}\geq 0$  for $1 \le k \le m$. Since $\phi$ is bounded above in the compact manifold $\mathbb{P}^n$, we have $u(z) \le C + \frac{1}{2} \log(1+\norm{z}^2) \le O(1) + \log^+ \norm{z}$, hence $u \in \mathcal{L}_m(\mathbb{C}^n)$.
\end{proof}
\begin{exa}\label{ex:model}
Let $1 \le m < n$ and choose $0 < \varepsilon < \frac{m}{n - m}$. Define the function on $\mathbb{C}^n$:
\[
u(z) = \sum_{j=1}^m \log(1 + |z_j|^2) - \varepsilon \sum_{j=m+1}^n \log(1 + |z_j|^2).
\]
Since $\varepsilon < \frac{m}{n-m}$, then $u$ is $m$-subharmonic.
Moreover, for $\|z\| \gg 1$, we have the asymptotic behavior:
\(
u(z) = 2\bigl(m - \varepsilon(n - m)\bigr) \log^+\|z\| + O(1).
\)
Thus, if we set $\gamma := 2\bigl(m - \varepsilon(n - m)\bigr)$, then the normalized function
\(
\frac{u}{\gamma}(z) = \log^+\|z\| + O(1)
\)
belongs to the $m$-Lelong class $\mathcal{L}_m(\mathbb{C}^n)$.
Note that the function $u$ is not plurisubharmonic. 
 Applying Lemma \ref{correspondence} to $\frac{u}{\gamma} \in \mathcal{L}_m(\mathbb{C}^n)$, we obtain the $\omega_{FS}$-$m$-subharmonic function on $\mathbb{P}^n$:
\[
\phi(\zeta) = \frac{1}{\gamma} \left[ \sum_{j=1}^m \log(1 + |z_j|^2) - \varepsilon \sum_{j=m+1}^n \log(1 + |z_j|^2) \right] - \frac{1}{2} \log(1 + \|z\|^2),
\]
where $z_j = \zeta_j/\zeta_0$ for $\zeta_0 \neq 0$. This function extends  to an $\omega_{FS}$-$m$-subharmonic function on all of $\mathbb{P}^n$.
\end{exa}

\begin{thm}
Let $ \Omega \subset\subset \mathbb{C}^n $ be an $ m $-hyperconvex domain and let $ u \in \mathcal{F}_m(\Omega) $ be such that $H_m(u)$ charges no pluripolar sets in $ \Omega $ and satisfies $\int_\Omega H_m(u) \leq 1$. Then its maximal subextension $ \tilde{u} $ from $ \Omega $ to $ \mathbb{C}^n $ belongs to $ \mathcal{L}^m(\mathbb{C}^n) $ and has a well-defined global Hessian measure $H_m(\tilde{u})$, carried by the set $\{\tilde{u} = u\} \cup \partial \Omega$ and satisfying the inequality:
$$
\mathbf{1}_\Omega H_m(\tilde{u}))\leq \mathbf{1}_\Omega H_m(u).
$$
\end{thm}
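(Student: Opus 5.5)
The plan is to transfer the problem to $(\mathbb{P}^n,\omega_{FS})$ through Lemma \ref{correspondence} and then invoke the compact-manifold results of Section 3.

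\textbf{Step 1: transfer to $\mathbb{P}^n$.} Set $\rho(z)=\frac12\log(1+\|z\|^2)$ and $\varphi:=u-\rho$ on $\Omega$. Since $\omega_{FS}+dd^c\varphi=dd^cu$ on $\mathbb{C}^n$, the function $\varphi$ is $\omega_{FS}$-$m$-subharmonic on $\Omega$ and $H_m(\varphi)=(dd^cu)^m\wedge\omega_{FS}^{n-m}$. Two normalization issues must be handled here.

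The first is the reference form. The local class $\mathcal{F}_m(\Omega)$ is defined with respect to $\beta$, not $\omega_{FS}$. I would either work throughout with the $m$-positivity cone relative to $\omega_{FS}$, noting that $\Omega$ is also quasi-$m$-hyperconvex for $\omega_{FS}$ (an exhaustion $r$ of $\Omega$ gives $r-\rho+\sup_\Omega\rho$), or assume the paper identifies these notions as in Lemma \ref{correspondence}.

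The second is the boundary values. The functions in $\mathcal{E}^0_m(\Omega,\omega_{FS})$ must tend to $0$ at $\partial\Omega$, whereas $u_j-\rho$ tends to $-\rho$. I would take the approximants $u_j\in\mathcal{E}^0_m(\Omega)$ with $u_j\searrow u$ and set $\varphi_j:=u_j-\rho+c$ with $c=\min_{\bar\Omega}\rho$, so that $\varphi_j\le 0$. This makes $\varphi_j$ negative but not zero on the boundary, so the class must be adjusted. The cleanest route is to redo the proofs of Lemma \ref{extE0} and Theorem \ref{ExtFm} directly for $\varphi=u-\rho$. Those arguments only use the comparison principle (Theorem \ref{pcf}), boundary behaviour $\liminf\ge$ const, and the mass bound. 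The mass bound holds because $\int_\Omega H_m(\varphi_j)=\int_\Omega (dd^cu_j)^m\wedge\omega_{FS}^{n-m}$, and with the normalization $\int_{\mathbb{P}^n}\omega_{FS}^n=1$ this is comparable to the local Hessian mass, which is $\le 1$. I expect this constant and normalization bookkeeping to be the main obstacle; I would try first to prove the mass inequality $\int_\Omega H_m(\varphi)\le\int_{\mathbb{P}^n}\omega_{FS}^n$ via Corollary \ref{compar}, comparing with the flat Hessian through $\omega_{FS}\le\beta$.

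\textbf{Step 2: existence of the subextension.} Since $H_m(u)$ charges no pluripolar sets, each measure $\mu_j$ in the proof of Theorem \ref{ExtFm} charges no $m$-polar sets. Hence \cite[Theorem 1.3]{CN} gives $v_j\in\mathcal{E}(\mathbb{P}^n,\omega_{FS},m)$ with $v_j\le\varphi_j$ on $\Omega$, and passing to the limit yields a global $\omega_{FS}$-$m$-sh subextension. Let $\tilde\varphi$ be the maximal one. By Lemma \ref{correspondence}, $\tilde u:=\tilde\varphi+\rho\in\mathcal{L}_m(\mathbb{C}^n)$ and $\tilde u\le u$ on $\Omega$.

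Conversely, if $w\in\mathcal{SH}_m(\mathbb{C}^n)$ satisfies $w\le u$ on $\Omega$, then $w\in\mathcal{L}_m$. This follows from a Liouville-type growth argument via the subextension bound, or I would define the maximal subextension within $\mathcal{L}_m$. In either case $w-\rho\le\tilde\varphi$, so $\tilde u$ is the maximal subextension.

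\textbf{Step 3: Hessian measure, support and domination.} Apply Theorem \ref{extEchi} with a weight $\chi$ adapted to $u$, or more simply follow its proof with $\chi(t)=t$ replaced by the finite-mass argument for $\mathcal{F}_m$. Together with the convergence results of \cite{CN} for the class $\mathcal{E}(\mathbb{P}^n,\omega_{FS},m)$, this shows that $H_m(\tilde\varphi)$ is well defined. It also gives $\mathbf{1}_\Omega H_m(\tilde\varphi)\le\mathbf{1}_\Omega H_m(\varphi)$ and that $H_m(\tilde\varphi)$ is carried by $\{\tilde\varphi=\varphi\}\cup\partial\Omega$; the support argument uses Lemma \ref{lemconv} as in Theorem \ref{extEchi}(iii). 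Translating back via $dd^c\tilde u=\omega_{FS}+dd^c\tilde\varphi$ on $\mathbb{C}^n$ and $\{\tilde\varphi=\varphi\}=\{\tilde u=u\}$ gives the stated properties of $H_m(\tilde u)$.
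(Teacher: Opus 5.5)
Your overall plan matches the paper's: pass to $\varphi=u-\rho$ on $(\mathbb{P}^n,\omega_{FS})$, bound $\int_\Omega H_m(\varphi)$ via $\omega_{FS}\le\beta$, apply Theorem \ref{extEchi}, and translate back through $dd^c\tilde u=\omega_{FS}+dd^c\tilde\varphi$. However, there is a genuine gap: you never supply the step that makes $H_m(\tilde u)$ well defined. That step is exactly where the hypothesis on $H_m(u)$ is used.

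The paper argues as follows. $H_m(\varphi)$ is a finite measure, and by hypothesis $H_m(\varphi)(\{\varphi=-\infty\})\le H_m(u)(\{u=-\infty\})=0$, so $-\varphi$ is finite $H_m(\varphi)$-a.e. Hence one can choose a convex increasing $\chi$ with $\chi(-\infty)=-\infty$, with $-\chi$ growing as slowly as needed, such that $\int_\Omega-\chi(\varphi)H_m(\varphi)<+\infty$. Then $\varphi\in\mathcal{E}^m_\chi$, and Theorem \ref{extEchi}(i) puts $\tilde\varphi$ in $\mathcal{E}^m_\chi(\mathbb{P}^n,\omega_{FS})$, where the Hessian operator is defined and the convergence results of \cite{CN} apply.

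You only say ``a weight $\chi$ adapted to $u$'' without producing it. You also spend the hypothesis on the measures $\mu_j$ of Theorem \ref{ExtFm}, where it is vacuous: $\mu_j=\mathbf{1}_\Omega H_m(\varphi_j)+\varepsilon_j\omega^n$ is built from bounded $\varphi_j$ and never charges $m$-polar sets. Your fallback, the ``finite-mass argument for $\mathcal{F}_m$'', does not close the gap. The paper's $\mathcal{F}_m$ statement (the proposition after Theorem \ref{ExtFm}) only dominates limit points of $H_m(\tilde\varphi_j)$ on $\Omega$. Without the non-charging hypothesis, Hessian mass can sit on $\{u=-\infty\}$ (think of a Green-type function whose measure is an atom at its pole). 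Then $\tilde\varphi$ need not have full mass, and a mass bound alone never places it in $\mathcal{E}(\mathbb{P}^n,\omega_{FS},m)$.

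Two secondary points. First, for the nonzero boundary values of $u-\rho$, you propose re-proving Lemma \ref{extE0} and Theorems \ref{ExtFm}, \ref{extEchi} with boundary data $-\rho|_{\partial\Omega}$. That is plausible, since the extra boundary term in the energy estimate is bounded, but it means redoing Section 3. The paper avoids this:
\begin{enumerate}
\item It first treats $\Omega=B_R$ centered at $0$ with $q=\rho-\frac12\log(1+R^2)$, which vanishes on $\partial B_R$, so $u-q$ fits the Section 3 framework verbatim.
\item For general $\Omega$, it takes a subextension $v\in\mathcal{F}_m(B)$ of $u$ to a ball $B\supset\Omega$ by \cite[Theorem 1]{MV}. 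Then $\psi=\tilde v-q\in\mathcal{E}^m_\chi(\mathbb{P}^n,\omega_{FS})$ is a subextension of $u-q$.
\item Since $\psi\le\tilde\varphi$, this gives existence, and $\tilde\varphi\in\mathcal{E}^m_\chi$ by monotonicity of the class.
\end{enumerate}
This reduction gives existence and the energy class for free. The support and domination statements on a general $\Omega$ still require transplanting the argument of Theorem \ref{extEchi}, much as you anticipate.

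Second, your claim that every $w\in\mathcal{SH}_m(\mathbb{C}^n)$ with $w\le u$ on $\Omega$ lies in $\mathcal{L}_m$ is false. Since $w\le u\le 0$ on $\Omega$, also $Aw\le u$ there for all $A\ge1$, so there are subextensions of arbitrarily large logarithmic growth (e.g.\ $A\tilde u$). The maximal subextension must therefore be taken within $\mathcal{L}_m$. That is your second option, and it is what the paper does implicitly.
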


\begin{proof}
Assume first  $ \Omega = B_R $ is a Euclidean ball centered at the origin with radius $ R > 0 $. The function $ q := \frac{1}{2}\log(1 + |z|^2) - \frac{1}{2}\log(1 + R^2) $ is a potential for the normalized Fubini-Study form $ \omega_{FS} $ in $ \mathbb{C}^n $, vanishing in $ \partial\Omega $. 
Put $ \varphi := u - q$. By construction, we have  
$
dd^c \varphi + \omega_{FS} = dd^c u.
$
Hence,
$$
\int_{B(0,R)} (dd^c \varphi + \omega_{FS})^m \wedge \omega_{FS}^{n-m}
= \int_{B(0,R)} (dd^c u)^m \wedge \omega_{FS}^{n-m}.
$$
It is known that the Fubini-Study form satisfies the inequalities:
$
\frac{1}{1 + R^2} \beta \leq \omega_{FS}(z) \leq \beta$ for all $z \in B(0,R).
$
Raising to the power $n - m$, we get:
$
\frac{1}{(1 + R^2)^{n-m}} \beta^{n-m} \leq \omega_{FS}^{n-m} \leq \beta^{n-m}.
$ Therefore,
$$
\int_{B(0,R)} (dd^c u)^m \wedge \omega_{FS}^{n-m}
\leq \int_{B(0,R)} (dd^c u)^m \wedge \beta^{n-m}
\leq 1,
$$
which implies:
$$
\int_{B(0,R)} (dd^c \varphi + \omega_{FS})^m \wedge \omega_{FS}^{n-m} \leq 1.
$$
Also, by our hypothesis, $ (\omega_{FS} + dd^c \varphi)^m\wedge \omega_{FS}^{n-m}(\{\varphi = -\infty\}) \leq (dd^c u)^m\wedge\beta^{n-m}(\{u = -\infty\}) = 0 $. Now, from standard results in measure theory, there exists a convex increasing function $ \chi : [-\infty, 0] \to [-\infty, 0] $ such that 
$$
\int_\Omega (-\chi \circ \varphi)(\omega_{FS} + dd^c \varphi)^m\wedge \omega_{FS}^{n-m} < +\infty
$$
 It follows  that $ \varphi \in \mathcal{E}_\chi^m(\Omega, \omega_{FS}) $, and we may apply the previous result to find a subextension $ \tilde{\varphi} \in \mathcal{E}_\chi^m(\mathbb{P}^n, \omega_{FS}) $ of $ \varphi $ to $ \mathbb{P}^n $. Then $ \tilde{u} := \tilde{\varphi} + q $ is the maximal subextension of $ u $ to $ \mathbb{C}^n $.

For the general case, consider a Euclidean ball $ B $ containing $ \Omega $. By \cite[Theorem 1]{MV}, there exists a subextension $ v \in \mathcal{F}_m(B) $ of $ u $. From the previous case, $ v $ admits a subextension $ \tilde{v} $ such that $ \psi := \tilde{v} - q $ is a function of $ \mathcal{E}_\chi^m(\mathbb{P}^n, \omega_{FS}) $, which is a subextension of $ \varphi := u - q $ from $ \Omega $ to $ \mathbb{P}^n $. Consequently, the maximal subextension $ \tilde{\varphi} $ of $ \varphi, $ exists and since $ \psi \leq \tilde{\varphi} $, it follows that $ \tilde{\varphi} \in\mathcal{E}_\chi^m(\mathbb{P}^n, \omega_{FS}) $. Thus, $ \tilde{u} := \tilde{\varphi} + q \in \mathcal{L}^m(\mathbb{C}^n) $ is the maximal subextension of $ u $ to $ \mathbb{C}^n $. The remaining properties follow in the same manner as in the proof of Theorem \ref{extEchi}.
\end{proof}
\noindent{\bf Notation.}
Let $u \in \mathcal{F}_m(\Omega)$ be an arbitrary function and let $\gamma > 0$ satisfy
\(
\gamma^{n} \geq \int_{\Omega} (dd^{c}u)^{m} \wedge \beta^{\,n-m}.
\)
By Theorem~\ref{ExtFm}, there exists an entire $m$-subharmonic subextension of $u$ with logarithmic growth bounded by $\gamma$. More precisely, the family
\(
\bigl\{ v \in \mathcal{SH}_m(\mathbb{C}^{n}) \; ; \; v|_{\Omega} \leq u
\ \text{and} \ v(z) \leq a_v + \gamma \log^{+}|z| \bigr\}
\)
is nonempty.
We denote by
\(
\mathcal{L}_m^{\gamma}(\mathbb{C}^{n})
:= \bigl\{ v \in \mathcal{SH}_m(\mathbb{C}^{n}) \; ; \; v(z) \leq C_v + \gamma \log^{+}|z| \bigr\}
\)
the class of entire $m$-subharmonic functions with logarithmic growth at most $\gamma$.
The maximal logarithmic subextension of $u$ is then defined by
\(
\tilde{u}_{\gamma}
:= \sup \bigl\{ v \in \mathcal{L}_m^{\gamma}(\mathbb{C}^{n}) \; ; \; v|_{\Omega} \leq u \bigr\}.
\)
Finally, we introduce the set
\(
S_{u, \gamma} := \bigl\{ z \in \mathbb{C}^{n} \; ; \; \tilde{u}_{\gamma}(z) < 0 \bigr\}.
\)

\begin{thm}\label{thm4.4}
Let $\Omega\subset \subset \mathbb{C}^n$ be a bounded $m$-hyperconvex domain, and let $\psi \in \mathcal{F}_m^a(\Omega)$ with associated Hessian measure $\mu = 1_\Omega(dd^c \psi)^m \wedge \beta^{n-m}$, where $\beta = dd^c |z|^2$. Assume $\mu(\mathbb{C}^n) = 1$. Then there exists $u \in \mathcal{L}_m(\mathbb{C}^n)$ such that:
\begin{enumerate}
    \item $u \leq \psi$ on $\Omega$,
    \item $(dd^c u)^m \wedge \beta^{n-m} = \mu$ on $\mathbb{C}^n$,
    \item $\sup_{\overline{\Omega}} u = 0$.
\end{enumerate}
\end{thm}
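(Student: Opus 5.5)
Our plan is to transfer the problem to $(\mathbb{P}^n,\omega_{FS})$ via Lemma \ref{correspondence} and solve a global complex Hessian equation there, as in the proof of Theorem \ref{ExtFm}. First we normalize so that $\mu$ has the same total mass as $\omega_{FS}^n$ on $\mathbb{P}^n$ (equal to $1$ with the normalized Fubini--Study form). Let $q=\frac12\log(1+|z|^2)$ be the potential of $\omega_{FS}$ on $\mathbb{C}^n$. For $u\in\mathcal{L}_m(\mathbb{C}^n)$ with associated $\phi=u-q$, we have $dd^cu=\omega_{FS}+dd^c\phi$. The Hessian measure in the statement is taken against $\beta^{n-m}$, not $\omega_{FS}^{n-m}$, so we must either interpret the global equation with the reference form $\beta$ or compare the two. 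The cleanest route is to pull $\mu$ back to $\mathbb{P}^n$ as a probability measure $\nu$ supported in $\overline\Omega\Subset\mathbb{C}^n$, carrying no mass on the hyperplane at infinity. Because $\psi\in\mathcal{F}_m^a(\Omega)$, the measure $\nu$ charges no $m$-polar set.

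Next we apply the global existence theorem \cite[Theorem 1.3]{CN}, as in Theorem \ref{ExtFm}, to get $\phi\in\mathcal{E}(\mathbb{P}^n,\omega_{FS},m)$ with $(\omega_{FS}+dd^c\phi)^m\wedge\omega_{FS}^{n-m}=\nu$. Setting $u=\phi+q\in\mathcal{L}_m(\mathbb{C}^n)$ by Lemma \ref{correspondence}, we get $(dd^cu)^m\wedge\omega_{FS}^{n-m}=\nu$ on $\mathbb{C}^n$. To obtain exactly $(dd^cu)^m\wedge\beta^{n-m}=\mu$, we prescribe $\nu$ as the measure with the correct density against $\omega_{FS}^{n-m}$. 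Since $\beta$ and $\omega_{FS}$ are comparable on $\overline\Omega$, this amounts to solving the flat equation directly. If the transfer of the $\beta^{n-m}$ factor is problematic, we instead solve $(dd^cu)^m\wedge\beta^{n-m}=\mu$ in $\mathcal{L}_m(\mathbb{C}^n)$ by approximation: take balls $B_R\supset\Omega$ and the solutions $u_R\in\mathcal{F}_m(B_R)$ with Hessian measure $\mu$ (Cegrell-type existence for measures vanishing on $m$-polar sets). Then renormalize $u_R$ by $\sup_{\overline\Omega}$, use that mass $1$ forces logarithmic growth $\le\log^+|z|+C$ uniformly, and extract a limit. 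We expect this step, reconciling the reference forms and getting the uniform logarithmic bound, to be the main obstacle.

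Finally, to obtain $u\le\psi$ on $\Omega$ and $\sup_{\overline\Omega}u=0$, we normalize $u$ by adding a constant so that $\sup_{\overline\Omega}u=0$. We then compare $u$ with $\psi$ on $\Omega$ as in Theorem \ref{ExtFm}. Set $v=\max\{u,\psi\}$ on $\Omega$. By Proposition \ref{demailly}, $(dd^cv)^m\wedge\beta^{n-m}\ge(dd^c\psi)^m\wedge\beta^{n-m}$, because on $\{u\ge\psi\}$ both measures are dominated by $\mu=(dd^c\psi)^m\wedge\beta^{n-m}$. On $\partial\Omega$ we have $\psi=0\ge u$, so $\liminf_{\partial\Omega}(\psi-v)\ge0$. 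The comparison principle in $\mathcal{F}_m$ (the local analogue of Theorem \ref{pcf}, valid for $\mathcal{F}_m^a$) then gives $\psi\ge v\ge u$ on $\Omega$. The delicate point here is applying the comparison principle to unbounded functions; we would handle it by approximating $\psi$ from above by elements of $\mathcal{E}_0^m$ and using the absence of mass on $m$-polar sets, as in Lemma \ref{PC}.
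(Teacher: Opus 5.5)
\noindent The gap is the sentence ``we prescribe $\nu$ as the measure with the correct density against $\omega_{FS}^{n-m}$''.

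For $m=n$ there is nothing to reconcile ($\beta^{0}=\omega_{FS}^{0}$). There your main route is fine, modulo the comparison issue below, and shorter than the paper's: solve $(\omega_{FS}+dd^c\phi)^n=\mu$ on $\mathbb{P}^n$, set $u=\phi+q$, normalize, compare.

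For $1\le m<n$ it fails. Relative to $\beta$, the form $\omega_{FS}=dd^c\frac12\log(1+|z|^2)$ has eigenvalues proportional to $(1+|z|^2)^{-1}$ on the hyperplane orthogonal to $z$ and to $(1+|z|^2)^{-2}$ in the direction of $z$. So it is not conformal to $\beta$. The ratio of $(dd^cu)^m\wedge\beta^{n-m}$ to $(dd^cu)^m\wedge\omega_{FS}^{n-m}$ at a point therefore depends on how $dd^cu$ sits relative to the radial direction, i.e. on the unknown $u$. For $m=1$ these are just the traces of $dd^cu$ in two non-conformal metrics.

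Hence no $\nu$ fixed in advance turns the equation on $\mathbb{P}^n$ into $(dd^cu)^m\wedge\beta^{n-m}=\mu$. Comparability of $\beta$ and $\omega_{FS}$ on $\overline\Omega$ gives at best one-sided inequalities, not ``the flat equation''. Even those need care for $m<n$, since $m$-subharmonicity with respect to $\beta$ and with respect to $\omega_{FS}$ are different notions.

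The paper never asks the global existence theorem for an exact solution. It works with the approximants $u_j\in\mathcal{E}_{m,0}(\Omega)$ and proceeds as follows.
\begin{enumerate}
\item It settles for a subsolution $w_j$ with $(dd^cw_j)^m\wedge\beta^{n-m}\ge(dd^cu_j)^m\wedge\beta^{n-m}$ and $w_j\le u_j$. The latter is a comparison between bounded functions, since $u_j$ has zero boundary values.
\item It takes the upper envelope $v_j$ of the class $\mathcal{S}_j$ of such subsolutions and lets $j\to\infty$.
\item It upgrades $(dd^cu)^m\wedge\beta^{n-m}\ge\mu$ to equality via balayage (support in $\overline\Omega$) and a total-mass count.
\end{enumerate}
That envelope-plus-mass mechanism is what your main route is missing.

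Your fallback through balls $B_R$ is a genuinely different route, but as written it is only a plan. Its two pillars are exactly what must be proved: a uniform growth bound ``forced by mass $1$'', and convergence of the Hessian measures along the renormalized sequence. That sequence is no longer monotone, so continuity along decreasing sequences does not apply.

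For $m<n$ the first pillar is also the wrong mechanism, because Hessian mass does not control growth in $\mathcal{L}_m(\mathbb{C}^n)$. Indeed $k\max\{r_0^{2-2n/m}-|z|^{2-2n/m},0\}\in\mathcal{L}_m(\mathbb{C}^n)$ for all $k>0$, while its Hessian mass is $k^m$ times a fixed positive number. By the same token, the paper's final mass count is a Lelong-class fact for $m=n$ and needs its own argument when $m<n$.

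What you can exploit instead is that the solutions $u_R\le 0$ decrease in $R$, by the comparison principle. So a limit that is not $\equiv-\infty$ is bounded above, lies in $\mathcal{L}_m(\mathbb{C}^n)$ trivially, and its Hessian measure is obtained by monotone continuity. The real work is ruling out $\equiv-\infty$; for $m<n$ this is where the fundamental solution $-|z|^{2-2n/m}$, which is bounded at infinity, enters.

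Finally, in your last step, $\psi\in\mathcal{F}^a_m(\Omega)$ has zero boundary values only in a weak sense. So $\liminf_{z\to\partial\Omega}(\psi-v)\ge0$ is not available and Theorem \ref{pcf} does not apply. You need the Cegrell-type comparison principle for $\mathcal{F}^a_m$, which is where the absence of mass on $m$-polar sets enters. Approximating $\psi$ from above by $\mathcal{E}^0_m$ functions does not give it directly, since the approximants have different Hessian measures. The paper sidesteps this by comparing only $w_j$ with the bounded $u_j$ and passing to the limit through the decreasing envelopes $v_j$.
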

\begin{proof}
Since $\psi \in \mathcal{F}_m^a(\Omega)$, there exists a decreasing sequence $\{u_j\} \subset \mathcal{E}_{m,0}(\Omega)$ such that:
$$
u_j \searrow \psi \quad \text{in } \Omega, \quad \sup_j \int_\Omega (dd^c u_j)^m \wedge \beta^{n-m} < +\infty.
$$
\textbf{Step 1.} Put $\gamma_j=(\int_{\Omega} (dd^c u_j)^m \wedge \beta^{n-m})^{\frac{1}{m}}$ and define the normalized measure:
$$
\mu_j := \frac{(dd^c u_j)^m \wedge \beta^{n-m}}{\gamma_j^m}.
$$
We claim that there exists $w_j \in \mathcal{L}_m^{\gamma_j}(\mathbb{C}^n)$ such that:
\begin{enumerate}
    \item $w_j \leq u_j$ on $\Omega$,
    \item $\sup_{\overline{\Omega}} w_j = 0$,
    \item $(dd^c w_j)^m \wedge \beta^{n-m} \geq (dd^c u_j)^m \wedge \beta^{n-m}$.
\end{enumerate}
Indeed, since $\mu_j$ does not charge $m$-polar sets, then by \cite[Theorem 1.3]{CN}, on the projective space $\mathbb{P}^n$ with Fubini–Study form $\omega_{FS}$, there exists a solution $\varphi_j \in \mathcal{SH}_m(\mathbb{P}^n, \omega_{FS})$ solving:
$$
(dd^c \varphi_j + \omega_{FS})^m \wedge \omega_{FS}^{n-m} = \mu_j.
$$ 
On the affine chart $\mathbb{C}^n \subset \mathbb{P}^n$, the Fubini–Study form is given by
$
\omega_{FS}  = dd^c \rho(z),
$
where $\rho(z) = \frac{1}{2}\log(1 + \|z\|^2)$. Let $R>0$ be such that $\Omega\subset B(0,R)$.
It is known that the Fubini--Study form satisfies the inequalities:
$
\frac{1}{1 + R^2} \beta \leq \omega_{FS}(z) \leq \beta$ for all $z \in B(0,R).
$ 
Define:
$$
w_j = \gamma_j \cdot (\varphi_j + \rho - \sup_{\overline{\Omega}} (\varphi_j + \rho)).
$$
This ensures $\sup_{\overline{\Omega}} w_j = 0$, $w_j\in\mathcal{L}_m^{\gamma_j}(\mathbb{C}^n)$ and:
$$
(dd^c w_j)^m \wedge \beta^{n-m}\geq (dd^c w_j)^m \wedge \omega_{FS}^{n-m}= \gamma_j^m(dd^c \varphi_j + \omega_{FS})^m \wedge \omega_{FS}^{n-m} = \gamma_j^m\mu_j=(dd^c u_j)^m \wedge \beta^{n-m}.
$$
Since $u_j\in\mathcal{E}_{m,0}(\Omega)$, $\liminf_{z\to \xi\in\partial\Omega} u_j(z)-w_j(z)\geq 0$, then by comparison principle $w_j\leq u_j$.\\
\textbf{Step 2.}
For each $u_j$, define:
$$
\mathcal{S}_j = \left\{ w \in \mathcal{L}_m^{\gamma_j}(\mathbb{C}^n) : w \leq u_j \text{ on } \Omega,\ (dd^c w)^m \wedge \beta^{n-m} \geq (dd^c u_j)^m \wedge \beta^{n-m} \right\}.
$$
By step 1, $\mathcal{S}_j \neq \emptyset$. Let $v_j = (\sup \mathcal{S}_j)^*$. Then:
$ v_j \leq u_j$ on $\Omega$, $\sup_{\overline{\Omega}} v_j = 0$ and $
    (dd^c v_j)^m \wedge \beta^{n-m} \geq (dd^c u_j)^m \wedge \beta^{n-m}.$
The sequence $\{v_j\}$ is decreasing and locally uniformly bounded. Define:
$
u = \left( \lim_{j \to \infty} v_j \right)^*.
$
Then $u$ is $m$- subharmonic and since $\gamma_j\to 1$, $u \in \mathcal{L}_m(\mathbb{C}^n)$.
For each $j$, $\sup_{\overline{\Omega}} v_j = 0$. By upper semicontinuity of $u$ and Hartogs theorem:
$$
\sup_{\overline{\Omega}} u \geq \limsup_{j \to \infty} \sup_{\overline{\Omega}} v_j = 0.
$$
Since $u \leq 0$ on $\Omega$, equality holds.
By balayage technique we have 
$\operatorname{supp}\left((dd^c u)^m \wedge \beta^{n-m}\right) \subset \overline{\Omega}$.
Since $(v_j)$ is d\'ecreasing, convergence theorem get :
$$
(dd^c u)^m \wedge \beta^{n-m} \geq \limsup_{j \to \infty} (dd^c v_j)^m \wedge \beta^{n-m} \geq \mu.
$$
Mass conservation gives:
$$
\int_{\mathbb{C}^n} (dd^c u)^m \wedge \beta^{n-m} = 1 = \mu(\mathbb{C}^n).
$$
Since both measures have the same total mass and support, we conclude that
$
(dd^c u)^m \wedge \beta^{n-m} = \mu.
$
 
\end{proof}
\begin{thm}\label{thmgamma} Let $ u \in \mathcal{F}_m(\Omega) $ such that the set $ S_{u,\gamma} $ is bounded, then the Hessian measure of $ \tilde{u}_\gamma $ is well defined.
\end{thm}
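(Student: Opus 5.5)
The plan is to reduce the statement to the Cegrell-type criterion for the domain of definition of the Hessian operator: an $m$-subharmonic function whose unbounded locus is compactly contained in $\mathbb{C}^n$ has a well-defined Hessian measure. Concretely, I would show that $\tilde{u}_\gamma$ is bounded outside a compact set and then invoke the local theory. In the local theory, bounded $m$-sh functions are in $\mathcal{D}_m$, and the operator is local in the plurifine topology, so it suffices to understand $\tilde u_\gamma$ near infinity and on a large ball separately.

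\textbf{Step 1 (behaviour outside a compact set).} Since $S_{u,\gamma}$ is bounded, choose $R>0$ with $\overline{\Omega}\cup S_{u,\gamma}\subset B(0,R)$. By the definition of $S_{u,\gamma}$ we have $\tilde{u}_\gamma\geq 0$ on $\mathbb{C}^n\setminus B(0,R)$. Moreover, the function $\max\{\tilde u_\gamma,\ \gamma\log(|z|/R')\}$ (with $R'$ large) is still a competitor in $\mathcal{L}_m^\gamma(\mathbb{C}^n)$, by the standard gluing argument. Combined with the upper bound $v\le C_v+\gamma\log^+|z|$, this gives $0\le\tilde u_\gamma\le C+\gamma\log^+|z|$ away from $B(0,R)$. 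In particular $\tilde u_\gamma$ is locally bounded on $\mathbb{C}^n\setminus \overline{B(0,R)}$, so $(dd^c\tilde u_\gamma)^m\wedge\beta^{n-m}$ is defined there by Bedford--Taylor.

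\textbf{Step 2 (reduction to a Cegrell class on a ball).} Take $R_1>R$ and set $B=B(0,R_1)$. On $B$ the function $w:=\tilde u_\gamma - \sup_{\overline B}\tilde u_\gamma$ is $m$-sh and negative. Near $\partial B$ it is bounded, and $w\ge \gamma\log(|z|/R_1)-C$ there. I then aim to show that $w\ge \max\{\tilde u,\ldots\}$-type minorants place $w$ above a function of $\mathcal{F}_m(B)$. More precisely, $\tilde u_\gamma\ge v$, where $v$ is a subextension of $u$ in $\mathcal{F}_m(B)$ given by \cite[Theorem 1]{MV}, glued with $\gamma\log(|z|/R_1)$ near $\partial B$. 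By the monotonicity in Lemma \ref{fcomp} (its local version), $w$ then belongs to $\mathcal{F}_m(B)$ modulo a bounded perturbation near $\partial B$. Functions in $\mathcal{F}_m(B)$ lie in $\mathcal{D}_m(B)$, so $H_m(\tilde u_\gamma)$ is well defined on $B$.

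\textbf{Step 3 (patching).} On the overlap $B\setminus\overline{B(0,R)}$ the two definitions agree, because both are limits of $H_m$ of the bounded truncations $\max\{\tilde u_\gamma,-k\}$, and these truncations coincide with $\tilde u_\gamma$ there for $k$ large. Hence the local measures glue to a Radon measure on $\mathbb{C}^n$. Continuity along any decreasing sequence of bounded $m$-sh functions follows from the corresponding continuity on $B$ and on the exterior region.

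\textbf{Main obstacle.} The main obstacle is Step 2: producing a genuine $\mathcal{F}_m(B)$ minorant with the correct boundary behaviour. $\tilde u_\gamma$ does not vanish on $\partial B$, and the $m$-subharmonic gluing near $\partial B$ must be done carefully. I would first try $\max\{v,\ A(|z|^2-R_1^2)\}$-type barriers, using that $v\in\mathcal{F}_m(B)$ has bounded Hessian mass.
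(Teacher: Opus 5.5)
\textbf{Overall.} Your architecture is workable: local boundedness off $\overline{S_{u,\gamma}}$, a Cegrell-type minorant on a large ball, and locality of $\mathcal{D}_m$. Working on a ball would even avoid a hypothesis the paper slips in at the end of its proof, namely that $S_{u,\gamma}$ is $m$-hyperconvex. (The paper identifies $\tilde u_\gamma$ on $S_{u,\gamma}$ with the maximal subextension of $u$ to $S_{u,\gamma}$ and applies the local subextension theory there to continuous approximants $u_j$. It then passes to weak limits of $H_m(\tilde u_{j,\gamma})$, which also yields the decomposition $f\,(dd^cu)^m\wedge\beta^{n-m}+\nu$; your route gives only well-definedness.)

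\textbf{The gap in Step 2.} The step carrying all the content is the inequality $v\le\tilde u_\gamma$ on $B$, and it is not proved. The justification you give fails. The function $\max\{v,\gamma\log(|z|/R_1)\}$, glued with $\gamma\log(|z|/R_1)$ off $B$, is not a competitor: on $\Omega$ it exceeds $u$ wherever $u<\gamma\log(|z|/R_1)$, e.g. near the $-\infty$ locus of $u$. The same defect kills the competitor $\max\{\tilde u_\gamma,\gamma\log(|z|/R')\}$ in Step 1. A maximum is admissible only if both pieces are already $\le u$ on $\Omega$. The barrier $\max\{v,A(|z|^2-R_1^2)\}$ does not address this either.

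\textbf{The missing idea.} Glue $v$ with $\tilde u_\gamma$ itself. Since $S_{u,\gamma}\subset B(0,R)$ and $R<R_1$, every $\xi\in\partial B$ satisfies $\tilde u_\gamma(\xi)\ge 0\ge\limsup_{z\to\xi}v(z)$. Hence $G:=\max\{v,\tilde u_\gamma\}$ on $B$, $G:=\tilde u_\gamma$ on $\mathbb{C}^n\setminus B$, is $m$-subharmonic. It has the growth of $\tilde u_\gamma$ and satisfies $G\le u$ on $\Omega$. Maximality gives $G\le\tilde u_\gamma$, i.e. $v\le\tilde u_\gamma$ on $B$. This is exactly where the boundedness of $S_{u,\gamma}$ must enter.

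\textbf{No boundary problem remains.} After this, no barrier is needed. Fix $U\Subset B$, an exhaustion $\rho\in\mathcal{E}^0_m(B)$ with $\rho\le -1$ on $U$, and $M\ge\sup_B\tilde u_\gamma\ (\ge 0)$. Then $\max\{\tilde u_\gamma-M,\ v+M\rho\}$ is nonpositive and dominates $v+M\rho\in\mathcal{F}_m(B)$, so it lies in $\mathcal{F}_m(B)$ by monotonicity. It coincides with $\tilde u_\gamma-M$ on $U$, since there $v+M\rho\le v-M\le\tilde u_\gamma-M$. Locality of $\mathcal{D}_m$ then finishes the argument.

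\textbf{The growth bound in Step 1.} The bound $\tilde u_\gamma\le C+\gamma\log^+|z|$ does not follow from $v\le C_v+\gamma\log^+|z|$, because the constants $C_v$ are not uniform. You only need local boundedness above, which is automatic once $\tilde u_\gamma$ is $m$-subharmonic. For $m<n$, however, that presupposition fails. With $\overline{\Omega}\subset B(0,r)$, the function $g=\max\{0,\,1-(r/|z|)^{2n/m-2}\}$ is a bounded $m$-subharmonic function vanishing on $\Omega$. So $v_0+Mg$ is a competitor for every $M$ and any competitor $v_0$, forcing $\tilde u_\gamma=+\infty$ a.e. on $\{|z|>r\}$. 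The statement, and your argument, therefore make sense only when the envelope is genuinely $m$-subharmonic, e.g. $m=n$.
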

\begin{proof}
  Set $ \gamma^{m} = \int_{\Omega} (dd^{c} u)^{m}\wedge\beta^{n-m} $ and let  $ u_{j} \in \mathcal{E}_{m,0}(\Omega) \cap C(\bar{\Omega}) $ be a sequence decreasing to $ u $. We claim that, if $ \mu $ is an accumulation point of $ (dd^{c} \tilde{u}_{j,\gamma})^{m}\wedge \beta^{n-m} $, then
$$
\mu = f (dd^c u)^m \wedge \beta^{n-m} + \nu 
$$ 
with $ 0 \leq f \leq 1 $  is a function vanishing outside $\Omega$ and $\nu$ a positive measure with $ \operatorname{supp} \nu \subset \partial S_{u,\gamma} \cap \partial \Omega $.
\textbf{Step 1.}
First assume $ u \in \mathcal{E}_{m,0}(\Omega) \cap C(\bar{\Omega}) $. Then $ \tilde{u}_\gamma $ is continuous, and the sublevel set $ S_{u,\gamma} = \{\tilde{u}_\gamma < 0\} $ is $ m $-hyperconvex.  By Theorem \ref{thm4.4}, 
then there exists $v \in \mathcal{L}^\gamma_m(\mathbb{C}^n)$ such that:
$v \leq u$ on $\Omega$,
    $(dd^c v)^m \wedge \beta^{n-m} = \mathbf{1}_\Omega (dd^c u)^m \wedge \beta^{n-m}$ on $\mathbb{C}^n$ and
   $\sup_{\overline{\Omega}} v = 0$. Since $ \Omega \subset S_{u,\gamma} $ and $v\leq \tilde{u}_\gamma$, then $\sup_{\overline{\Omega}} \tilde{u}_\gamma = 0$. It follow that
$ \Omega $ is not relatively compact in $S_{u,\gamma} $. Two cases arise:

\begin{itemize} 
\item[1)] $ \Omega = S_{u,\gamma} $: In this case, $ \tilde{u}_\gamma $ extends $ u $ globally to $\mathcal{L}^\gamma_m(\mathbb{C}^n)\cap L^\infty_{loc}$, $\mathbf{1}_{\Omega} (dd^c \tilde{u}_\gamma)^m \wedge \beta^{n-m} \leq \mathbf{1}_\Omega (dd^c u)^m \wedge \beta^{n-m}$ and $supp  (dd^c \tilde{u}_\gamma)^m\wedge \beta^{n-m} \subset \bar{\Omega}$. But $(dd^c \tilde{u}_\gamma)^m\wedge \beta^{n-m}$ not charge $\partial S_{u,\gamma}=\partial \Omega$ and $\int_{\mathbb{C}^n}(dd^c \tilde{u}_\gamma)^m\wedge \beta^{n-m}=\gamma^m$. Hence :
$$
(dd^c \tilde{u}_\gamma)^m \wedge \beta^{n-m} = \mathbf{1}_\Omega (dd^c u)^m \wedge \beta^{n-m}.
$$

\item[2)] $ \Omega \subsetneq S_{u,\gamma} \subset \mathbb{C}^n $. Let $\tilde{u}$ be the maximal subextension of $ u $ to $S_{u,\gamma} $. Then $\tilde{u}=\tilde{u}_\gamma$. It follow by \cite[Lemma 3.2]{AE} or \cite{MV}, that 
$
(dd^c \tilde{u})^m \wedge \beta^{n-m} \leq \mathbf{1}_\Omega (dd^c u)^m \wedge \beta^{n-m}
$ on $S_{u,\gamma}$.
Thus, $$ (dd^c \tilde{u}_\gamma)^m \wedge \beta^{n-m} = f (dd^c u)^m \wedge \beta^{n-m} + \nu, $$
 where $ 0 \leq f \leq 1 $  is a function vanishing outside $\Omega$ and $\nu$ a positive measure with $ \operatorname{supp} \nu \subset \partial S_{u,\gamma} \cap \partial \Omega $.
\end{itemize}

\noindent\textbf{Step 2.} In the general case, let us choose a decreasing sequence $ u_j \in \mathcal{E}_{0}(\Omega) \cap C(\bar{\Omega}) $ converging toward  $ u $.  Then $ \tilde{u}_{j,\gamma} $ decreases toward $ \tilde{u}_{\gamma} $  and:
$$
(dd^c \tilde{u}_{j,\gamma})^m \wedge \beta^{n-m} = f_j (dd^c u_j)^m \wedge \beta^{n-m} + \nu_j,
$$
with $ 0 \leq f_j \leq 1 $  is a function vanishing outside $\Omega$ and $\nu_j$ a positive measure with $ \operatorname{supp} \nu \subset \partial S_{u_j,\gamma} \cap \partial \Omega $.
 If $ \mu $ is a weak limit of $ (dd^{c} \tilde{u}_{j,\gamma})^{n} $, then $ \mu = f(dd^{c} u)^{n} + \nu $ with $ \operatorname{supp} \nu \subset \partial S_{u,\gamma}\cap\partial \Omega $ and the claim is proved.\\
Now, if $ S_{u,\gamma} $ is a bounded and $m$-hyperconvex, the sequence $ (\tilde{u}_{j,\gamma}) $ is decreasing and uniformly bounded on every compact of $ S_{u,\gamma} $. It follows that the limit $ (dd^c \tilde{u}_\gamma)^m\wedge\beta^{n-m} $ exists and $ (dd^c \tilde{u}_\gamma)^m\wedge\beta^{n-m}=\mu $.
\end{proof}

\end{document}